\def \RR {\mathbb R}
\def \CC {\mathbb C}
\def \HH {\mathbb H}
\def \eps {\varepsilon}
\def \vphi {\varphi}
\def \cL {\mathcal L}
\def \rr {\mathbf{r}}
\newtheorem{theorem}{Theorem}[section]
\newtheorem{lemma}[theorem]{Lemma}
\newtheorem{proposition}[theorem]{Proposition}
\newtheorem{corollary}[theorem]{Corollary}
\theoremstyle{definition}
\newtheorem{remark}[theorem]{Remark}
\def\qed{\hfill $\vcenter{\hrule height .3mm
		\hbox {\vrule width .3mm height 2.1mm \kern 2mm \vrule width .3mm
			height 2.1mm} \hrule height .3mm}$ \bigskip}
\def \Re {{\rm  Re  }}
\def \susbeteq {\subseteq}
\begin{document}

\title{Horocyclic Brunn-Minkowski inequality}
\author{Rotem Assouline and Bo'az Klartag}
\date{}
\maketitle

\begin{abstract} Given two non-empty subsets $A$ and $B$ of the hyperbolic plane $\HH^2$, we define their horocyclic Minkowski sum
with parameter $\lambda=1/2$ as the set $[A:B]_{1/2} \subseteq \HH^2$ of all midpoints of horocycle curves connecting a point in $A$ with a point in $B$. These horocycle curves are parameterized by hyperbolic arclength. The horocyclic Minkowski sum with parameter $0 < \lambda <1$ is defined analogously. We prove that
when $A$ and $B$ are Borel-measurable,
$$ \sqrt{ Area( [A:B]_{\lambda} )} \geq (1-\lambda) \cdot \sqrt{ Area(A) } + \lambda \cdot \sqrt{ Area(B) }, $$
where $Area$ stands for hyperbolic area, with equality when $A$ and $B$ are concentric discs in the hyperbolic plane.
We also give horocyclic versions of the Pr\'ekopa-Leindler and Borell-Brascamp-Lieb inequalities.
These inequalities slightly deviate from the metric measure space paradigm on curvature and Brunn-Minkowski type inequalities, where the structure of a metric space is imposed on the manifold, and the relevant curves are necessarily  geodesics parameterized by arclength.
\end{abstract}

\section{Introduction}

The Brunn-Minkowski inequality is a geometric inequality
that was discovered circa 1887, see Schneider \cite[Section 7.1]{schneider} for its early history. In one of its formulations, it states that for any non-empty, Borel measurable subsets $A, B \subseteq \RR^n$
and $0 < \lambda < 1$,
\begin{equation} Vol_n( (1-\lambda) A + \lambda B )^{1/n} \geq (1-\lambda) \cdot Vol_n(A)^{1/n} + \lambda \cdot Vol_n(B)^{1/n}. \label{eq_1145}
\end{equation}
Here $Vol_n$ is $n$-dimensional volume in $\RR^n$, we write
$A + B = \{ x + y \, ; \, x \in A, y \in B \}$ for the Minkowski sum, and
$\lambda \cdot A = \{ \lambda x \, ; \, x \in A \}$. It is perhaps more common to formulate the Brunn-Minkowski inequality as stating that
\begin{equation} Vol_n( A + B )^{1/n} \geq  Vol_n(A)^{1/n} +  Vol_n(B)^{1/n}, \label{eq_1147}
\end{equation}
which is easily seen to be equivalent to (\ref{eq_1145}) by scaling. The advantage of the formulation (\ref{eq_1145}) is that the set $(1-\lambda) A + \lambda B$ is well-defined for any two sets $A$ and $B$ in an {\it affine} space, while the definition of the set $A + B$ requires the structure of a {\it linear} space, i.e., it requires a marked point in space referred to as the origin. The Brunn-Minkowski inequality is an indispensible tool in convex geometry, for instance it immediately implies the isoperimetric inequality, see
Burago and Zalgaller \cite{BZ},
Gardner \cite{gardner} or Schneider \cite{schneider}
for more information.

\medskip The Brunn-Minkowski inequality has been generalized to other geometries, and in particular to Riemannian manifolds. Given two subsets $A$ and $B$ of a complete, connected, $n$-dimensional, Riemannian manifold $M$, we may look at the set of all {\it midpoints} of minimizing geodesics connecting a point in $A$ with a point in $B$.
Observe that in the case where $M = \RR^n$, the set obtained is precisely $(A + B) / 2$. Similarly, we can make sense of $(1-\lambda) A + \lambda B $ for every $0<\lambda<1$. Provided that the Ricci curvature of $M$ is non-negative, it is known  that the Brunn-Minkowski inequality \eqref{eq_1145} holds true in $M$.
 See Cordero-Erausquin, McCann and Schmuckenschl\"ager \cite{CMS} for a proof of the Pr\'ekopa-Leindler inequality, which is a stronger, functional version of the Brunn-Minkowski inequality, in the Riemannian setting. For the case of constant curvature see also the earlier paper by Cordero-Erausquin \cite{DCE}. The Riemannian version of the Brunn-Minkowski inequality \eqref{eq_1145} in the case of non-negative Ricci curvature is discussed in Sturm \cite{sturm0, sturm}.

\medskip In negatively curved manifolds, the Brunn-Minkowski inequality may fail dramatically. Let us demonstrate this by looking at the example of the hyperbolic plane $\HH^2$. Consider two discs $A$ and $B$ of area one in $\HH^2$ that are far away from each other. It is not too difficult to show that the area of the set of midpoints
$$ \left \{ \gamma \left( \frac{1}{2} \right) \, ; \,
\gamma(0) \in A, \ \gamma(1) \in B, \textrm{and } \gamma:[0,1] \rightarrow \HH^2 \ \textrm{is a minimizing, constant speed geodesic }   \right \} $$
can be arbitrarily small. The existence of ``tiny bottlenecks'' through which all geodesics from $A$ to $B$ have to pass, roughly at their midpoint, is a fundamental property of negatively curved surfaces.

\medskip In this paper we propose to remedy the failure of the Brunn-Minkowski inequality in the hyperbolic plane by looking at horocycles instead of geodesics. The idea is that the geodesic curvature of the horocycles can, in some sense, compensate for the negative Gaussian curvature of the underlying hyperbolic space. This is loosely inspired by the success of the Bakry-\'Emery approach to Riemannian manifolds equipped with a measure, where
a negative Ricci tensor can be compensated by
the Hessian of the logarithm of the density.
See e.g. Bakry, Gentil and Ledoux \cite{BGL} for information on the Bakry-\'Emery theory.

\medskip
 A smooth curve $\gamma: [a,b] \rightarrow \HH^2$  is a {\it horocycle} if it has constant speed, and if its geodesic curvature equals $1$. For instance, consider the Poincar\'e disc model of hyperbolic geometry, in which the hyperbolic plane $\HH^2$ is represented in the unit disc $D = \{ z \in \CC \, ; \, |z| < 1 \}$ via the Riemannian metric tensor
 \begin{equation}  g =  \frac{4 |dz|^2}{(1- |z|^2)^2} \qquad \qquad (z \in D). \label{eq_1109} \end{equation}
In the Poincar\'e disc model, each horocycle is a Euclidean circle in the unit disc $D$ tangent to its boundary, i.e., a curve of the form
$$ \left \{ z \in D \, ; \, |z - a| = 1 - |a| \right \} $$
for some $0 \neq a \in D$. We emphasize that our horocycles are constant speed curves with respect to the hyperbolic metric,
since this is the only reasonable intrinsic way to
parametrize these curves. We refer the reader to Izumiya \cite{Iz} for a survey of horocyclical geometry,
in particular to the discussion of its resemblance to planar Euclidean geometry.
Horocycles can also be viewed as hyperbolic circles with center at infinity: the limit of a sequence of hyperbolic circles passing through a fixed point, whose center tends to infinity along some geodesic, is a horocycle.

\medskip
Any pair of points in the hyperbolic plane is joined by \emph{two} horocycles, and hence every two points have two different ``horocyclic midpoints". It is convenient to restrict our attention to \emph{oriented horocycles}: those horocycles whose velocity and acceleration vectors form an oriented orthogonal basis, after some choice of orientation has been made. In the Poincar\'e disc model, for example, we can restrict our attention to horocycles parametrized in the counterclockwise direction.
Thus, for any two distinct points $x,y \in \HH^2$ there is a unique oriented horocycle going from $x$ to $y$, which is different from the unique oriented horocycle going from $y$ to $x$.

\medskip For $A, B \subseteq \HH^2$ and for $0 < \lambda < 1$ we define the {\it $\lambda$-horocyclic Minkowski sum of $A$ and $B$}
as
\begin{equation}
	[A:B]_\lambda  = \left \{ \gamma(\lambda) \, ; \,  \gamma(0) \in A, \gamma(1) \in B, \ \textrm{and} \ \gamma \textrm{ is a constant-speed oriented horocycle} \right \}.
\label{eq_313} \end{equation}
When $A$ and $B$ are two concentric discs in $\HH^2$, the set $[A:B]_\lambda$ is again a disc. However, when the discs are not concentric, the $\lambda$-horocyclic Minkowski sum is not necessarily a disc, see Figure \ref{fig1}.

\begin{figure}
	\includegraphics[width = \textwidth]{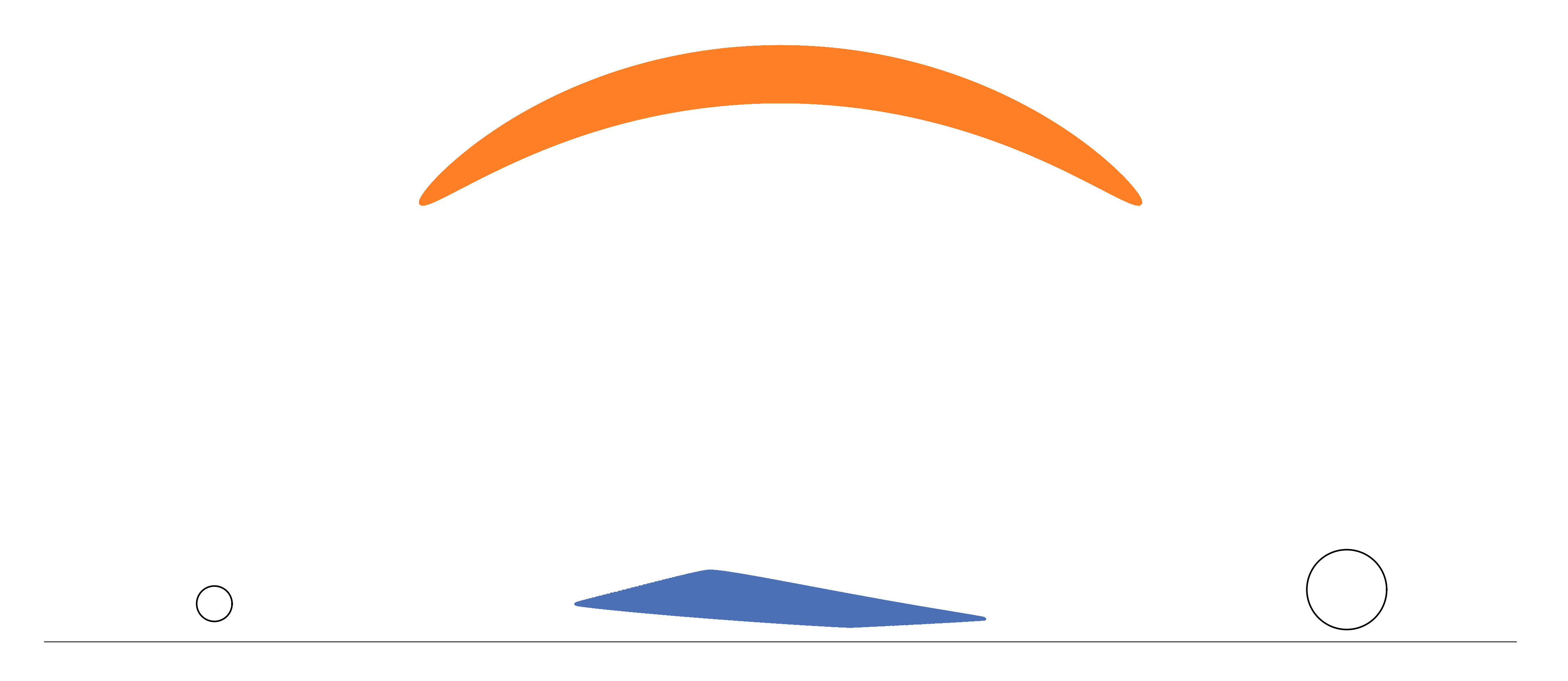}
	\caption{Horocyclic vs. geodesic Minkowski summation in the upper half plane model. The horocyclic $\frac12$-Minkowski sum of the two discs, which is the sum of midpoints of oriented horocycles joining the  disc on the left to the one on the right, is the {\color{Blue}{blue}} set in the middle. The  geodesic $\frac12$-Minkowski sum, i.e. the set of all midpoints of geodesics joining two discs, is the {\color{orange}{orange}} at the top. \label{fig1}}
\end{figure}

\begin{figure}
	\centering
	\includegraphics[width = .7\textwidth]{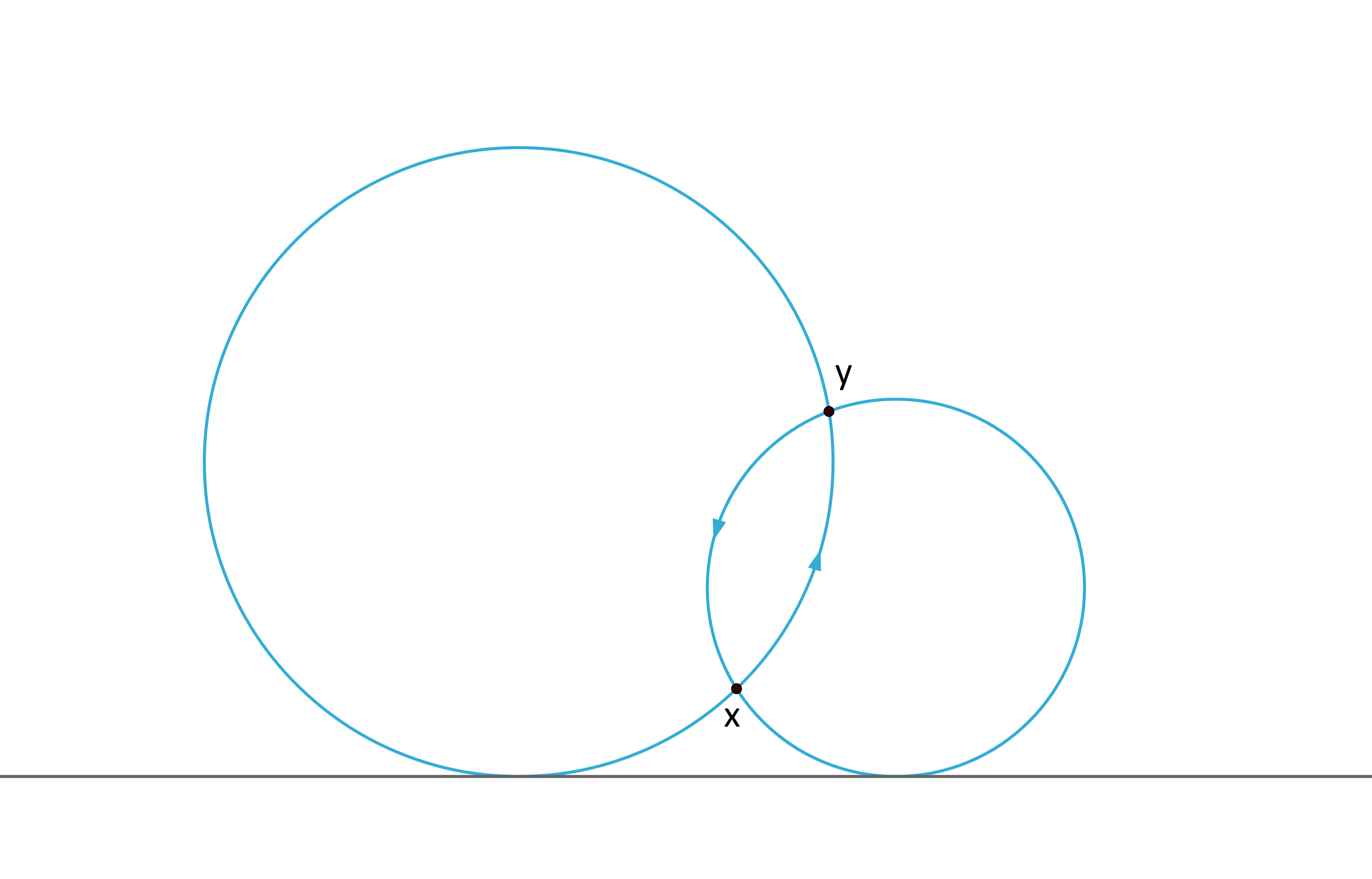}
	\caption{Horocycles joining two points $x,y \in \HH^2$, with the arrows indicating the orientation.}
\end{figure}

\medskip
Our main result is a Brunn-Minkowski theorem for the horocyclic Minkowski sum, which resembles the planar Euclidean Brunn-Minkowski inequality.

\begin{theorem} Let $A, B \subseteq \HH^2$ be non-empty, Borel measurable sets. Then for any $0 < \lambda < 1$,
	\begin{equation}  Area( [A:B]_\lambda )^{1/2} \geq (1-\lambda) \cdot Area(A)^{1/2} + \lambda \cdot Area(B)^{1/2} \label{eq_319} \end{equation}
where $Area$ stands for hyperbolic area in $\HH^2$. When $A$ and $B$ are concentric discs, or when one of the sets is a singleton, equality holds in (\ref{eq_319}).
\label{thm318}
\end{theorem}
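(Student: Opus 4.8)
The plan is to adapt the optimal-transport proof of the Riemannian Brunn--Minkowski and Pr\'ekopa--Leindler inequalities (in the style of \cite{CMS, DCE}) to \emph{horocyclic} interpolation, the crucial point being that the volume-distortion coefficients of horocyclic interpolation in $\HH^2$ are linear in $\lambda$, exactly as in the Euclidean plane, so that the mechanism by which negative curvature destroys the geodesic Brunn--Minkowski inequality is neutralised by the unit geodesic curvature of the horocycles. I would first introduce, for each $p \in \HH^2$, the \emph{horocyclic exponential map} $\cE_p \colon T_p\HH^2 \to \HH^2$: for $w \in T_p\HH^2$ let $\cE_p(w) = \gamma(1)$, where $\gamma$ is the constant-speed oriented horocycle with $\gamma(0) = p$ and $\gamma'(0) = w$ (so $\gamma$ covers hyperbolic arclength $|w|$). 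Since any two distinct points are joined by a unique oriented horocycle, $\cE_p$ is a bijection with $\cE_p(0) = p$, smooth off the origin, and $\gamma(\lambda) = \cE_p(\lambda w)$ gives $M_\lambda(p, \cE_p(w)) = \cE_p(\lambda w)$, i.e.\ $[\{p\}:B]_\lambda = \cE_p(\lambda\,\cE_p^{-1}(B))$. The first computation to carry out is that $\cE_p$ \emph{preserves hyperbolic area}: in the polar-type coordinates on $\HH^2$ in which $\theta$ records the direction of $w$ and $s = |w|$, one checks that the point reached at arclength $s$ lies at hyperbolic distance $2\operatorname{arcsinh}(s/2)$ from $p$, and that the hyperbolic area element is $s\,ds\,d\theta$, i.e.\ $\cE_p$ pulls back hyperbolic area to Lebesgue measure on $T_p\HH^2 \cong \RR^2$. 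Consequently $\mathrm{Area}([\{p\}:B]_\lambda) = \lambda^2\,\mathrm{Area}(B)$, which is \eqref{eq_319} with equality when $A$ is a singleton; the case $[A:\{q\}]_\lambda$ follows by reversing the orientation, and the concentric-disc equality case follows as well, since a hyperbolic disc centered at $p$ is a Euclidean disc centered at $0$ in the coordinates furnished by $\cE_p$.

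For the general inequality I would run a mass-transportation argument. After reducing to the case $0 < \mathrm{Area}(A),\mathrm{Area}(B) < \infty$ (using monotonicity of $[A:B]_\lambda$ in each argument and inner regularity), set $\mu = 1_A\,d\mathrm{Area}/\mathrm{Area}(A)$ and $\nu = 1_B\,d\mathrm{Area}/\mathrm{Area}(B)$, and let $T \colon A \to B$ push $\mu$ to $\nu$ optimally for a cost adapted to horocycles, e.g.\ $c(x,y) = \tfrac12|\cE_x^{-1}(y)|^2$, the squared hyperbolic length of the oriented horocyclic arc from $x$ to $y$ (an \emph{asymmetric} cost, mirroring the asymmetry of oriented horocycles). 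Put $S_\lambda(x) = M_\lambda(x, T(x)) = \cE_x(\lambda\,\cE_x^{-1}(T(x))) \in [A:B]_\lambda$. Since $\mu$ and $\nu$ have constant densities on $A$ and $B$, the transport equation forces the hyperbolic Jacobian of $T$ to be the constant $\mathrm{Area}(B)/\mathrm{Area}(A)$, $\mu$-a.e.

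The heart of the proof is then the pointwise inequality
\begin{equation}
\bigl(\det dS_\lambda(x)\bigr)^{1/2} \ \geq\ (1-\lambda) + \lambda\,\bigl(\det dT(x)\bigr)^{1/2} \qquad (\mu\text{-a.e.\ }x). \label{eq_jacplan}
\end{equation}
To establish it one differentiates $r \mapsto S_\lambda(x(r))$ along a curve and analyses the Jacobi-type equation satisfied by the variation field of the family $\lambda \mapsto M_\lambda(x(r), T(x(r)))$ of oriented horocycles; the point to verify --- this is where unit geodesic curvature compensates the $-1$ Gaussian curvature --- is that this field depends \emph{affinely} on $\lambda$, so that $dS_\lambda$ equals $(1-\lambda)\id + \lambda\,dT$ up to correction terms that are controlled by the $c$-cyclical monotonicity of $T$ and do not hurt the inequality; then \eqref{eq_jacplan} follows from the Minkowski determinant inequality (concavity of $\det^{1/2}$ on positive $2\times2$ matrices). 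Granting \eqref{eq_jacplan} together with the $\mu$-a.e.\ injectivity of $S_\lambda$ (inherited from the monotonicity structure of $T$), the area formula gives
\[
\mathrm{Area}\bigl([A:B]_\lambda\bigr) \ \geq\ \mathrm{Area}\bigl(S_\lambda(A)\bigr) \ =\ \int_A \det dS_\lambda \ \geq\ \mathrm{Area}(A)\Bigl((1-\lambda) + \lambda\sqrt{\mathrm{Area}(B)/\mathrm{Area}(A)}\Bigr)^{2},
\]
and taking square roots yields \eqref{eq_319}. Feeding general (non-uniform) densities in place of $1_A, 1_B$ through the same inequality \eqref{eq_jacplan} produces the horocyclic Pr\'ekopa--Leindler and Borell--Brascamp--Lieb inequalities.

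I expect the main obstacle to be the proof of \eqref{eq_jacplan}: carrying out the horocyclic Jacobi-field computation and confirming that its distortion coefficients are the flat ones $1-\lambda$ and $\lambda$, rather than the $\sinh$-type coefficients that arise for geodesic interpolation. A secondary difficulty, needed to make the transport step rigorous, is the analysis of the asymmetric cost $c(x,y) = \tfrac12|\cE_x^{-1}(y)|^2$, whose generating map degenerates along the diagonal: one must verify a twist/non-degeneracy condition, existence and uniqueness of the optimal map $T$, and a representation of $T$ (equivalently of $S_\lambda$) in the form to which the determinant inequality applies.
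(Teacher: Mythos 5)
Your reduction of the singleton case via the area-preserving horocyclic exponential map, and the resulting scaling identity $\mathrm{Area}([\{p\}:B]_\lambda)=\lambda^2\,\mathrm{Area}(B)$, are correct (this matches the horocyclic polar-coordinate computation in the paper), and the one-sided distortion coefficients are indeed the flat ones. But the core of your argument --- the pointwise Jacobian inequality $\bigl(\det dS_\lambda(x)\bigr)^{1/2} \ge (1-\lambda) + \lambda\,\bigl(\det dT(x)\bigr)^{1/2}$ --- is asserted rather than proved, and this is exactly where the entire difficulty of the theorem sits. The affine behaviour you invoke is only verified for endpoint-fixed (or one-parameter, equal-speed) families of horocycles; to run a CMS-type argument you must in addition (a) establish existence, uniqueness and a $c$-exponential representation $T(x)=\cE_x(\cdot)$ of the optimal map for the cost $c(x,y)=\tfrac12\bigl(2\sinh(d(x,y)/2)\bigr)^2=\cosh d(x,y)-1$, together with a.e.\ second-order (Aleksandrov-type) differentiability of the potential; (b) write $dS_\lambda$, in a suitable frame, as a matrix path affine in $\lambda$ whose endpoint matrices satisfy the positivity needed for Minkowski's determinant inequality --- this requires a horocyclic matrix Jacobi/Riccati analysis and a positivity statement coming from the second-order optimality condition for this specific cost, none of which is sketched; saying the correction terms ``are controlled by $c$-cyclical monotonicity and do not hurt the inequality'' is precisely the statement that has to be proven; and (c) prove a.e.\ injectivity of $S_\lambda$, i.e.\ that two oriented horocycles used by the optimal plan cannot cross at the same intermediate time --- for geodesics this follows from cyclical monotonicity and the triangle inequality, but for horocycles with this cost a new argument is needed (note the paper shows there is \emph{no} Finsler metric whose geodesics are arclength-parametrized horocycles, so the geodesic theory cannot be borrowed wholesale). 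Until (a)--(c) are carried out, inequality \eqref{eq_319} is not established by your argument.

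Two smaller points. The concentric-disc equality case does not follow from the remark that $\cE_p$ maps hyperbolic discs centered at $p$ to Euclidean discs centered at $0$: the horocyclic combination $[x:y]_\lambda$ of two general points is \emph{not} the Euclidean affine combination of $\cE_p^{-1}(x)$ and $\cE_p^{-1}(y)$ in these coordinates, so you still need an upper bound on $[A:B]_\lambda$; the paper obtains it from the convexity of $t\mapsto 2\sinh\bigl(d(p,\gamma(t))/2\bigr)$ along constant-speed horocycles. For comparison, the paper sidesteps your main obstacles by using $L^1$ (needle) transport for an auxiliary Randers--Finsler metric whose geodesics are reparametrized oriented horocycles --- the decomposition into transport rays is insensitive to parametrization --- after which the only geometric input is that the Jacobian of a congruence of unit-speed horocycles is affine in arclength, combined with a one-dimensional directed Borell--Brascamp--Lieb inequality on each needle.
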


\medskip Theorem 1.1 has a functional version in spirit of the Pr\'ekopa-Leindler and the Borell-Brascamp-Lieb inequalities, see Theorem \ref{horoBBL} below.

\begin{remark} Theorem \ref{thm318} remains correct if one
modifies definition (\ref{eq_313}) of the $\lambda$-horocyclic Minkowski sum, replacing ``an oriented horocycle'' by ``a horocycle''. When $A, B \subseteq \HH^2$ are concentric discs,
equality holds in (\ref{eq_319}) also with respect to this modified definition. \end{remark}

Theorem \ref{thm318} is analogous to the Brunn-Minkowski inequality in its formulation (\ref{eq_1145}) above. There is also a horocyclic analogue of \eqref{eq_1147}, see Remark \ref{horosuccinct}. We still do not know how to fully characterize the equality cases of (\ref{eq_319}). The higher dimensional case is briefly discussed in Remark \ref{r550} below. We remark that the operation \eqref{eq_313} is different from the horospherical Minkowski summations which appear in \cite{GST} and are defined using horospherical support functions.
The idea of using families of curves which are not necessarily geodesics of some metric in order to define Minkowski summation is explored further in a forthcoming paper \cite{A}. This research direction somewhat differs from the metric-measure space viewpoint on curvature and Brunn-Minkowski type inequalities, see Sturm \cite{sturm0, sturm} and Lott and Villani \cite{LV}, as well as from  the Lorentzian approach of McCann \cite{mccann}
and Cavalleti and Mondino \cite{CM2_}.

\medskip The proof of Theorem \ref{thm318} utilizes $L^1$-mass transport, or the {\it needle decomposition} approach for proving geometric inequalities on manifolds, that was suggested and developed in the Riemannian setting in \cite{K1}. This approach was extended to the
setting of a metric-measure space  by Cavalleti and Mondino \cite{CM1} and to the non-reversible Finsler setting by Ohta \cite{Oh}.

\medskip
The idea behind the needle decomposition technique is to localize the inequality by distintegrating the measure into one-dimensional measures called {\it needles}, and then to prove that the inequality holds on each individual needle. The first step is achieved in Theorem \ref{horodecomp}, and is a consequence of $L^1$-mass tansport theory. The second step is a consequence of a local property of horocycles which is given in Lemma \ref{detdFaffine}. A curious aspect of our proof is that we need to work with a certain auxiliary Finsler metric whose geodesics are {\it reparameterized} horocycles in order to construct the desired needle decomposition.
We learned about this Finsler metric from the work of Crampin and Mestdag \cite{CM2}.

\medskip The rest of this paper is organized as follows:
in Section \ref{backgroundsec} we recall general facts about Finsler metrics and horocycles. In Section \ref{proofsec} we state a theorem about horocyclic needle decomposition, Theorem \ref{horodecomp}, and use it to prove Theorem \ref{thm318}. In Section \ref{decompsec} we prove a needle decomposition theorem for Finsler manifolds, Theorem \ref{finslerdecomp}, from which Theorem \ref{horodecomp} follows. The results of subsections \ref{partsubsec} and \ref{finsdecompsubsec} are essentially known, but we believe that they might be useful to some readers as a relatively short exposition of the details behind the needle decomposition technique.

\medskip
{\it Acknowledgement.}  The second named author would like to thank Itai Benjamini for a fruitful discussion
that  led us to conjecture the horocyclic Brunn-Minkowski inequality and for his encouragement along the way.
We would like to thank the anonymous referee for reading the paper carefully and offering useful remarks. Supported by a grant from the Israel Science Foundation (ISF).

\section{Background}
\label{backgroundsec}

\subsection{Finsler structures}

Let $M$ be an $n$-dimensional smooth manifold. We write $T_x M$ for the tangent space at the point $x \in M$, and $TM = \cup_{x \in M} T_x M$ is the tangent bundle.

\medskip
A \emph{Finsler structure} (or \emph{Finsler metric}) on $M$ is a function $\Phi : TM \to [0,\infty)$, smooth away from the zero section, which satisfies the following
 requirements:

\begin{itemize}
	\item \emph{positively homogeneous}: $\Phi(\lambda v ) = \lambda \Phi(v)$ for all $v \in TM$ and $\lambda>0$.
	\item \emph{strongly convex}: Fix $x \in M$. Then the function $\Phi^2$ is convex in the linear space $T_x M$, and moreover  its Hessian at any point $0 \neq v \in T_x M$ is positive definite.
\end{itemize}

We refer the reader to Bao, Chern and Shen \cite{BCS} for more background on Finsler structures and for proofs of the basic facts mentioned in this section. A Finsler structure induces a metric on $M$ by setting
$$d(x,y) = d_M(x,y) = \inf_\gamma \mathrm{Length}(\gamma),$$
where
$$\mathrm{Length}(\gamma) : = \int_0^1\Phi(\dot\gamma(t)) dt,$$
and where the infimum is taken over all $C^1$ curves joining $x$ to $y$.
Since the function $\Phi$ is only positively homogeneous, $\Phi(-v)$ is not necessarily equal to $\Phi(v)$. As a result, the distance function may not be symmetric. In fact, the metric $d$ is symmetric if and only if $\Phi(v) = \Phi(-v)$ for all $v \in TM$, in which case $(M,\Phi)$ is said to be \emph{reversible}.

\medskip
Our notation does not fully distinguish between a parameterized curve $\gamma: [a,b] \rightarrow M$ and its image $\gamma([a,b])$ which is just a subset of $M$, sometimes endowed with an orientation. A curve has constant speed if $\Phi(\dot{\gamma}(t))$ is constant in $t$, and it has unit speed if $\Phi(\dot{\gamma}(t)) = 1$ for all $t$.

\medskip
 A \emph{minimizing geodesic} or a {\it forward-minimizing geodesic} is a constant-speed curve $\gamma:[a,b] \to M$ satisfying $\mathrm{Length}(\gamma) = d(\gamma(a),\gamma(b))$. A {\it geodesic} is a curve
$\gamma:[a,b] \to M$ that is locally a minimizing geodesic, i.e., for any $t_0 \in [a,b]$ there exists $\delta > 0$ such that the restriction of $\gamma$
to the interval $[a,b] \cap (t_0 - \delta, t_0 + \delta)$
is a minimizing geodesic. Equivalently, a geodesic is a solution to the Euler-Lagrange equation associated with the Lagrangian $\Phi^2/2$. We note that unless $\Phi$ is reversible, the curve $t \mapsto \gamma(-t)$ does not have to be a geodesic.

\medskip We recall that a minimizing geodesic $\eta$ cannot intersect a minimizing geodesic $\gamma:(a,b) \rightarrow M$ at more than one point unless they overlap.

\medskip
We say that $(M,\Phi)$ is \emph{geodesically convex} if any two points  $x,y \in M$ are joined by a minimizing geodesic.
We say that $(M,\Phi)$ is \emph{strongly convex} if any two points  $x,y \in M$ are joined by a \emph{unique} minimizing geodesic.
Any point in $M$ has a neighborhood that is strongly convex.
Any complete Finsler manifold is automatically geodesically convex.
We do not require below that $(M,\Phi)$ be complete.

\medskip Given $x\in M$ and a covector $\alpha \in T_x^*M$, the fiberwise strong convexity of $\Phi^2/2$ implies that the supremum
\begin{equation}\label{legendre}\sup_{v \in T_x M} \left[ \alpha(v) - \frac{\Phi^2(v) }{2} \right] \end{equation}
is uniquely attained at a tangent vector $\cL(\alpha) \in T_xM$, depending smoothly on $\alpha$. The resulting diffeomorphism $\cL : T^*M \to TM$ is called the \emph{Legendre transform} associated with the function $\Phi^2/2$. (In convexity theory it is  common to refer to the expression in (\ref{legendre})  as the Legendre transform, and not to the unique maximizer, but here we try to stick to  standard terminology
in Finsler geometry).

\medskip
Let $f : M \to \RR$ be a smooth function. The Legendre transform $\cL(df)$ of the differential of $f$ is referred to as the \emph{gradient} of $f$ and denoted by $\nabla f$. Note that the gradient $\nabla f(x)$ points at the direction of steepest infinitesimal ascent of the function $f$.

\medskip
One particular type of Finsler metrics, known as \emph{Randers metrics}, will be relevant to us. A Randers metric on a manifold $M$ is a Finsler metric of the form
	\begin{equation}\label{randersdefeq}\Phi = \sqrt{g} - \eta,\end{equation}
	where $g$ is a Riemannian metric on $M$ and $\eta$ is a 1-form on $M$. Thus $\eta$ is a fiberwise linear function on $TM$, while $g$ is a fiberwise quadratic form on $T M$, and consequently $\Phi$ is a well-defined
function on $TM$ that is fiberwise positively $1$-homogeneous. If $|\eta|_g < 1$ then $\Phi$ is a Finsler metric, see \cite{BCS} for more details. When $\dim M = 2$, Randers metrics have the following property, see Section 4 in \cite{CM2}.

\begin{proposition}\label{randersprop}
	Let $(M,g)$ be an oriented Riemannian surface, let $\eta$ be a 1-form on $M$ with $|\eta|_g < 1$, and let $\Phi = \sqrt{g} - \eta$ be the corresponding Randers metric. Write $d\eta = \kappa\, \omega_g$, where $\omega_g$ is the Riemannian area form. Then for every geodesic $\gamma$ of the Finsler metric $\Phi$, the signed geodesic curvature of $\gamma$ with respect to the Riemannian metric $g$ at a point $\gamma(t)$ equals $\kappa(\gamma(t))$.
\end{proposition}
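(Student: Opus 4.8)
The plan is to exploit the variational characterization of Finsler geodesics together with the additive splitting of the Randers length functional. Recall that a $\Phi$-geodesic is, up to reparameterization, a critical point of the length functional $L_\Phi(\gamma) = \int \Phi(\dot\gamma)\,dt$ among curves with fixed endpoints (this is the standard relation between the energy Lagrangian $\Phi^2/2$ and the length functional; see \cite{BCS}). Since the signed geodesic curvature of an oriented curve with respect to $g$ is unchanged under orientation-preserving reparameterization, I would reparameterize $\gamma$ by $g$-arclength near the point of interest without affecting the quantity to be computed. For a Randers metric one has the exact splitting $L_\Phi(\gamma) = L_g(\gamma) - \int_\gamma \eta$, where $L_g$ is Riemannian length, and the idea is to compute the first variation of the two summands separately along a variation $\{\gamma_s\}$ with variation field $V$ vanishing at the endpoints, and then to read off the Euler--Lagrange condition.

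For the Riemannian length, I would invoke the classical first variation formula at a $g$-unit-speed curve, $\frac{d}{ds}\big|_{s=0} L_g(\gamma_s) = -\int g(\nabla_{\dot\gamma}\dot\gamma, V)\,dt$ with $\nabla$ the Levi--Civita connection of $g$ (the boundary term vanishes); writing $V = \phi\,\dot\gamma + \psi\,\mathfrak n$ with $\mathfrak n$ the leftward unit normal, only the normal component survives and this equals $-\int k_g\,\psi\,dt$, where $k_g$ is the signed geodesic curvature of $\gamma$. For the $1$-form term, I would use the Cartan identity $d\eta(X,Y) = X(\eta(Y)) - Y(\eta(X)) - \eta([X,Y])$ with $X = \partial_s\Gamma$, $Y = \partial_t\Gamma$ (which commute as fields along $\Gamma$); integrating in $t$ and using that $V$ vanishes at the endpoints gives $\frac{d}{ds}\big|_{s=0}\int_{\gamma_s}\eta = \int d\eta(V,\dot\gamma)\,dt = \int \kappa\,\omega_g(V,\dot\gamma)\,dt$, and since $\omega_g(\dot\gamma,\mathfrak n) = 1$ for the positively oriented orthonormal frame $(\dot\gamma,\mathfrak n)$ this reduces to $-\int \kappa\,\psi\,dt$.

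Subtracting, the first variation of $L_\Phi$ is $\frac{d}{ds}\big|_{s=0} L_\Phi(\gamma_s) = \int (\kappa - k_g)\,\psi\,dt$, which must vanish for every compactly supported $\psi$ because $\gamma$ is a geodesic; hence $k_g(\gamma(t)) = \kappa(\gamma(t))$ for all $t$, as claimed. I expect the genuine work here to be pure sign bookkeeping rather than anything conceptual: the sign in $\Phi = \sqrt g - \eta$, the orientation normalization of the area form $\omega_g$, the choice of leftward normal $\mathfrak n$, and the sign convention for signed geodesic curvature all have to be aligned so that the final identity reads $k_g = +\kappa$ and not $k_g = -\kappa$. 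A secondary point worth a sentence is that the first variation formula for $L_g$ requires only $\gamma_0$, and not the nearby curves $\gamma_s$, to be unit-speed, so reparameterizing $\gamma$ imposes no constraint on the admissible variations. One could instead deduce the statement from the Hamiltonian description of Randers geodesics as trajectories of a magnetic flow, but the variational route sketched above seems the most economical.
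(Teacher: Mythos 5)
Your argument is correct, and it is genuinely a proof, which is more than the paper offers for Proposition \ref{randersprop}: the paper simply quotes the result from Section 4 of \cite{CM2} (Crampin--Mestdag), where it is obtained by analysing the geodesic equations of Randers metrics, and never reproves it. Your variational route -- split $L_\Phi = L_g - \int\eta$, apply the Riemannian first variation at a $g$-unit-speed critical point to get $-\int k_g\,\psi\,dt$, and treat the $1$-form term via $d\eta$ and Stokes to get $-\int\kappa\,\psi\,dt$ -- is the standard ``magnetic geodesic'' characterization and is exactly in the spirit of what the paper does later anyway: the proof of Lemma \ref{CMmin} exploits the same additive splitting $\int_\gamma(\sqrt g-\eta)$ and the fact that $\eta$ is determined up to an exact form by $d\eta$. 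Your sign bookkeeping checks out against the paper's conventions ($\perp$ is rotation by $+\pi/2$, $(\dot\gamma,\mathfrak n)$ positively oriented, so $\omega_g(V,\dot\gamma)=-\psi$ and the first variation of $L_\Phi$ is $\int(\kappa-k_g)\psi\,dt$), and it is consistent with the paper's example where $d\eta=\omega_g$ and the geodesics are horocycles of signed curvature $+1$. Two small points worth a sentence each if you write this up: justify applying Cartan's formula to the fields $\partial_s\Gamma,\partial_t\Gamma$ along the variation map (cleanest is to pull $\eta$ back to the $(s,t)$-rectangle and use $d(\Gamma^*\eta)=\Gamma^*d\eta$, which gives $\partial_s\bigl(\eta(\partial_t\Gamma)\bigr)-\partial_t\bigl(\eta(\partial_s\Gamma)\bigr)=d\eta(\partial_s\Gamma,\partial_t\Gamma)$ directly), and record why critical points of the parametrization-invariant length functional coincide, up to reparametrization, with the paper's geodesics (Euler--Lagrange for $\Phi^2/2$), which for the non-reversible case is the forward-length statement in \cite{BCS}. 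Neither is a gap, just rigor to make explicit.
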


{}\subsection{Horocycles in the hyperbolic plane}\label{horosubsec}

Here and throught the paper we shall use freely elementary facts about the hyperbolic plane, which can be found in standard textbooks, e.g \cite{Fen}. Fix an orientation of the hyperbolic plane $\HH^2$. By \emph{oriented horocycles} we mean constant-speed curves on $\HH$ of constant signed geodesic curvature 1. Thus a curve $\gamma$ is an oriented horocycle if $\nabla_{\dot\gamma}\dot\gamma = |\dot\gamma|\dot\gamma^{\perp}$, where $^\perp$ denotes rotation by $\pi/2$ in the oriented direction. Each pair of points $p,q\in\HH^2$ are joined by a unique oriented horocycle, whose length is $2 \sinh(d(p,q)/2)$, where $d$ is hyperbolic distance.

\medskip
Consider the Poincar\'e disc model of the hyperbolic plane, identifying $\HH^2$ with the unit disc of the complex plane $\{z \in \CC \, ; \, |z|<1 \}$ endowed with the Poincar\'e metric (\ref{eq_1109}).  The tangent space at any point of the unit disc will be naturally identified  with $\CC$. The collection of \emph{unit}-speed, oriented horocycles in the Poincar\'e disc is exactly the set of curves of the form
	\begin{equation}
	\alpha_{\lambda, t_0, \omega}(t) = \omega \cdot \frac{t - t_0 + (1-\lambda)i}{t - t_0 + (1+\lambda)i}
	\qquad \qquad \qquad (t \in \RR)
	\label{alphaformula}
	\end{equation}
	for $\lambda > 0$, $t_0 \in \RR$ and $\omega$ on the unit circle. Indeed, the Cayley transform $z \mapsto (z-i)/(z+i)$ maps the one-parameter family of unit-speed horocycles $\{t \mapsto  (i + t)/\lambda\}_{\lambda > 0}$ on the upper-half plane to the family $\{t\mapsto (t+(1-\lambda)i)/(t+(1+\lambda)i)\}_{\lambda > 0}$ of unit-speed oriented horocycles on the unit disc whose point at infinity is $1$. By applying an arbitary rotation and time-translation we get the formula \eqref{alphaformula}.
	
	%\medskip Differentiating \eqref{alphaformula} with respect to $t$ we get:
	%\begin{equation}
	%    \dot\alpha_{\lambda, t_0, \omega}(t) = \omega \cdot \frac{2 \lambda i}{(t - t_0 + (1+ \lambda)i )^2}.
	%\label{alphadotformula}
	%\end{equation}

	\medskip
	Given a point $z$ in the unit disc and a unit tangent vector $v$ at $z$ of unit hyperbolic norm, the unique (unit-speed) oriented horocycle which at time $t$ visits $z$ with velocity  $v$, is represented by the parameters
	\begin{align}
	\begin{split}\label{parametereq}
	\lambda & = \frac{1 - |z| ^2}{| \hat v - i z |^2} \\
	t_0 & = t - \frac{2\Re(\hat v\bar z)}{| \hat v - i z |^2} \\
	\omega & = \frac{z + i \hat v}{1 + i \bar z \hat v} ,
	\end{split}
	\end{align}
	where $\hat v = 2v/(1-|z|^2)$ (this can be seen by a straightforward computation).
	In particular, $\omega, t_0, \lambda$ are smooth functions of $z, v$ and $t$.

\medskip It was observed in Crampin and Mestdag \cite{CM2} that there exists a Finsler structure on $\HH^2$ of Randers type whose geodesics are oriented horocycles, up to orientation-preserving {\it reparametrization}. In the Poincar\'e disc model, this Finsler structure is given by:
\begin{equation}
\Phi(x,y,u,v) = 2 \cdot \frac{ \sqrt{u^2 + v^2} + uy - xv}{1 - x^2 - y^2} \qquad (u,v) \in T_{(x,y)} D, \ (x,y) \in D
\label{horofinsler}
\end{equation}
where $D = \{ z \in \CC \, ; \, |z| < 1 \} \cong \{ (x,y) \in \RR^2 \, ; \, x^2 + y^2 < 1 \}$ and the tangent plane to $\RR^2$ at $(x,y)$ is identified with $\RR^2$ in the natural manner, with $(u,v) \in \RR^2$ being the coordinates in $T_{(x,y)} D$. Thus the metric $\Phi$ takes the form \eqref{randersdefeq} with $g$ being the Poincar\'e metric and with
$$\eta  = 2\cdot \frac{xdy - ydx}{1 - x^2 - y^2}, $$
which evidently satisfies $|\eta|_g < 1$ in $D$.
The fact that the geodesics of $\Phi$ are reparametrizations of oriented horocycles is a consequence of Proposition \ref{randersprop}: it is easy to check that
$$d\eta = 4\cdot\frac{dx\wedge dy}{(1-x^2-y^2)^2}$$
which is the hyperbolic area form, so the geodesics of $\Phi$ have signed geodesic curvature $1$ with respect to the hyperbolic metric.

% \begin{remark} Formula (\ref{eq_937}) is reminiscent of the magnetic Lagrangian $L(v) = |v|^2 - \eta_x(v)$, whose critical curves are again curves of constant geodesic curvature in the Riemannian manifold $(M,g)$. However, in the magnetic case the constant geodesic curvature is {\it not} the same for different critical curves: a charged particle of very high speed barely rotates when it enters a magnetic field, unlike     a slower particle of the same mass and charge. In contrast, the Lagrangian associated with (\ref{eq_937}) yields critical curves whose constant geodesic curvature is always $\kappa$.
% \end{remark}

% In the Poincar\'e disc model, we set $\eta = 2(y dx - x d y) / (1 - x^2 - y^2)$
% so that $|\eta| < 1$ while $d \eta$ is the hyperbolic area form, and observe that
% the Finsler structure $\Phi$ from (\ref{horofinsler})
% coincides with $F$ from Lemma \ref{lem_1042} for $\kappa = 1$.
\medskip
We shall need an extra fact about the Finsler structure $\Phi$ which is not mentioned in \cite{CM2}, namely, that all of its geodesics are length-minimizing.

\begin{lemma}\label{CMmin}
	Let $\Phi$ be defined as in \eqref{horofinsler}. Then for every $p,q \in \HH^2$ there is a unique geodesic of the Finsler metric $\Phi$  joining $p$ to $q$, and every geodesic of $\Phi$ is minimizing.
\end{lemma}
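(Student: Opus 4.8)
The plan is to establish uniqueness first, then minimality, after one normalization. Note that a Randers metric $\sqrt g - \eta$ and $\sqrt g - (\eta + df)$ have the same geodesics and the same minimizing curves between any fixed pair of endpoints, since their length functionals differ by the constant $f(\gamma(1)) - f(\gamma(0))$ on curves with those endpoints; and a hyperbolic isometry $\rho$ preserves $g$ and the area form $\omega_g$, hence sends the form $\eta$ of \eqref{horofinsler} to another primitive $\eta + df_\rho$ of $\omega_g$, so $\rho$ carries $\Phi$-geodesics to $\Phi$-geodesics and $\Phi$-minimizers to $\Phi$-minimizers. It therefore suffices to treat the case in which one of the two points is the centre $o$ of the Poincar\'e disc, about which $\Phi$ itself is invariant since $\eta$ in \eqref{horofinsler} is rotation-invariant. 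For uniqueness: as $d\eta = \omega_g$, Proposition~\ref{randersprop} shows that every $\Phi$-geodesic has signed hyperbolic geodesic curvature $1$, so it traverses monotonically, at constant $\Phi$-speed, an arc of an oriented horocycle; a $\Phi$-geodesic from $o$ to $q$ must then be the constant-$\Phi$-speed parametrization of the arc from $o$ to $q$ of the unique oriented horocycle going from $o$ to $q$ (Section~\ref{horosubsec}), and so is unique. Existence of such a geodesic is the content of the Crampin--Mestdag identification recalled before \eqref{horofinsler}.

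For minimality I would exhibit a calibrating function. Using \eqref{alphaformula}, the oriented horocycles through $o$ are the Euclidean circles $\{\,|z - \xi/2| = 1/2\,\}$ with $|\xi| = 1$ (the one through $o$ has parameter $\lambda = 1$); computing the $\Phi$-length element of the unit-$g$-speed parametrization --- it equals $4/(t^2 + 4)$ --- and integrating shows that the oriented horocyclic arc leaving $o$ with unit direction $e^{i\beta}$ is, in its $\Phi$-arclength parametrization, $s \mapsto \sin(s/2)\, e^{i(\beta + s/2)}$ for $s \in [0,\pi)$, exiting the disc as $s \uparrow \pi$. Hence $\exp^{\Phi}_{o}(w) = \sin|w|\cdot e^{i(\arg w + |w|)}$ is a homeomorphism of $\{\,|w| < \pi/2\,\}$ onto $\HH^2$, a diffeomorphism away from $o$, and the function $r(z) := 2\arcsin|z|$ --- the $\Phi$-arclength from $o$ along the oriented horocyclic arc through $z$ --- is smooth on $\HH^2 \setminus \{o\}$, tends to $0$ at $o$, and satisfies $dr(\dot\sigma) = \Phi(\dot\sigma)$ along each such arc $\sigma$. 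The crucial point is that $r$ is $1$-Lipschitz for $\Phi$, i.e. $\Phi^{*}(dr) \equiv 1$ for the dual co-norm $\Phi^{*}$, equivalently $dr(v) \le \Phi(v)$ for every tangent vector $v$: this follows from the Finsler Gauss lemma (\cite{BCS}), $\exp^{\Phi}_{o}$ being a diffeomorphism away from $o$, and can also be checked directly from the dual-Randers norm formula, using that $dr$ is $g$-orthogonal to $\eta$ and $|dr|^2_g = 1 - |z|^2 = 1 - |\eta|^2_g$. Granting this, for every $C^1$ curve $\gamma$ joining $o$ to $q$ one has $\mathrm{Length}_\Phi(\gamma) = \int_0^1 \Phi(\dot\gamma(t))\,dt \ \ge\ \int_0^1 dr(\dot\gamma(t))\,dt = r(q)$, while the oriented horocyclic arc from $o$ to $q$ has $\Phi$-length precisely $r(q)$; so that arc is minimizing, and with the uniqueness above and the normalization, every geodesic of $\Phi$ is minimizing.

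The main obstacle is the explicit $\Phi$-arclength description of the horocycles through $o$ --- equivalently, checking that $\exp^{\Phi}_{o}$ is a global diffeomorphism onto $\HH^2$ with no conjugate points --- together with the companion identity $\Phi^{*}(dr) \equiv 1$; both reduce to elementary but slightly delicate computations from \eqref{alphaformula} and \eqref{horofinsler}. It is worth stressing that $(\HH^2,\Phi)$ is not forward complete: each oriented horocycle reaches the ideal boundary of $\HH^2$ after $\Phi$-length $\pi$, so a full horocycle has finite $\Phi$-length $2\pi$. Consequently minimality cannot be deduced from a Hopf--Rinow type argument, which is precisely why the calibration $r$ is needed.
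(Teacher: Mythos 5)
Your proposal is correct, and it rests on the same underlying mechanism as the paper's proof --- a calibration argument exploiting the gauge freedom $\eta \mapsto \eta + df$, which changes the length functional only by a boundary term and hence preserves geodesics and minimizers --- but it is implemented along a genuinely different route. The paper moves the two given points by an orientation-preserving isometry so that the connecting horocycle becomes the horizontal line $y=1$ in the upper half-plane, and then calibrates that single curve using the form $dx/y$ plus an exact term, the whole argument reducing to the pointwise inequality $\sqrt{dx^2+dy^2}\ge dx$. You instead normalize one endpoint to the centre $o$ of the disc and calibrate the entire pencil of oriented horocycles emanating from $o$ at once, via the function $r(z)=2\arcsin|z|$: your computations check out (the $\Phi$-speed of the unit hyperbolic-speed horocycle through $o$ is indeed $4/(t^2+4)$, giving the parametrization $s\mapsto \sin(s/2)e^{i(\beta+s/2)}$), and the crucial identity $\Phi^*(dr)\equiv 1$ does follow from $dr\perp_g\eta$, $|dr|_g^2+|\eta|_g^2=1$ and Cauchy--Schwarz, with equality precisely in the horocyclic direction. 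Your version is more computational but buys more: it produces the explicit distance $d_\Phi(o,\cdot)=2\arcsin|\cdot|$ (consistent with the angle formula in the proposition following Lemma \ref{CMmin}) and makes the forward incompleteness of $(\HH^2,\Phi)$ transparent, whereas the paper's argument is shorter and needs no control of the exponential map. Two small touch-ups: in the reduction you should say \emph{orientation-preserving} isometry, as the paper does, since an orientation-reversing one sends $\eta$ to a primitive of $-\omega_g$ and carries $\Phi$-geodesics to reversed horocycles; and the uniqueness step tacitly uses that the positively oriented arc of the other horocycle through the two points escapes to the ideal point without returning --- a fact your incompleteness computation in effect supplies, and which the paper treats at the same one-line level.
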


\begin{proof}
	Since geodesics of $\Phi$ are oriented horocycles, every $p,q \in \HH^2$ are joined by a unique geodesic of $\Phi$. It remains to prove that this geodesic is length-minimizing.

\medskip 
	Let $p,q \in \HH^2$, where now we view $\HH^2$ as the upper half plane with the Poincar{\'e} metric $g = (dx^2 + dy^2)/y^2$. 
In the upper half plane model, the Finsler metric $\Phi$ still takes the form $\Phi = \sqrt{g} - \eta$ with 
$d \eta = \omega_g$, where $\omega_g$ is the hyperbolic area form. 

\medskip Let $\gamma$ be an arbitrary $C^1$ curve joining $p$ to $q$ and let $\gamma_0$ denote the oriented horocycle joining $p$ to $q$. Let $T$ be an orientation-preserving hyperbolic isometry mapping the point $\tilde p = (0,1)$ to $p$ and the point $\tilde q = (r,1)$ to $q$, for some $r > 0$. Let $\tilde\gamma = T^{-1}\circ\gamma$ and $\tilde \gamma_0 : = T^{-1}\circ\gamma_0$. Since $T$ is an orientation-preserving isometry, $\tilde\gamma_0$ is the oriented horocycle joining $\tilde p$ to $\tilde q$, which is given by $\tilde\gamma_0(t) = (rt,1)$ for $0\le t \le 1$.
	Then
	$$\int_\gamma\Phi = \int_\gamma\left(\sqrt{g} - \eta\right) = \int_{\tilde \gamma}\left(\sqrt{T^*g} - T^*\eta\right) = \int_{\tilde\gamma}\left(\sqrt{g} - T^*\eta\right)$$
	since $T$ is an isometry. Moreover, since $d\eta = \omega_g$, we have $d(T^*\eta) = T^*(d\eta) = T^*\omega_g = \omega_g$ because $T$ is an isometry. Note that the 1-form $dx/y$ also satisfies $d(dx/y) = dx\wedge dy/y^2 = \omega_g$, so there exists a smooth function $f : \HH^2 \to \RR$ such that $T^*\eta = dx/y + df$. Thus
	\begin{align*}
		\int_\gamma\Phi & = \int_{\tilde\gamma}\left(\sqrt{g} - dx/y + df\right) \\
		& = \int_{\tilde\gamma}\left(\frac{\sqrt{dx^2 + dy^2}}{y} - \frac{dx}{y}\right) + f(\tilde q) - f(\tilde p )\\
		& \ge f(\tilde q) - f(\tilde p).
	\end{align*}
	On the other hand, by the same argument,
	$$\int_{\gamma_0}\Phi = \int_{\tilde\gamma_0}\left(\frac{\sqrt{dx^2 + dy^2}}{y} - \frac{dx}{y}\right) + f(\tilde q) - f(\tilde p) = f(\tilde q) - f(\tilde p)$$
	since along $\gamma_0$ we have $dy = 0$ and $dx > 0$. Thus $\gamma_0$ is the shortest curve from $p$ to $q$ with respect to $\Phi$. Moreover, equality implies that $dy = 0$ and $dx \ge 0$ along $\tilde\gamma$, whence $\tilde\gamma = \tilde\gamma_0$ (up to orientation-preserving reparametrization), and $\gamma$ is the oriented horocycle joining $p$ to $q$.
\end{proof}
The distance function induced by $\Phi$ has  the following simple description in the Poincar{\'e} disc model. We will not make use of this fact, but we find it interesting enough to mention.

\begin{proposition}
If $z,w$ are two points in the unit disc, then $d_{\Phi}(z,w) = \angle(z,p,w)$, where $p$ is the \emph{Euclidean} center of the oriented horocycle joining $z$ to $w$, and the angle is a Euclidean angle. In particular, the distance between any two points is less than $2\pi$.
\end{proposition}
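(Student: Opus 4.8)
The plan is to reduce everything to Lemma~\ref{CMmin}. That lemma guarantees that $z$ and $w$ are joined by a unique $\Phi$-geodesic — the oriented horocycle $\gamma_0$ from $z$ to $w$ — and that this geodesic is length-minimizing, so $d_\Phi(z,w) = \mathrm{Length}_\Phi(\gamma_0) = \int_{\gamma_0}\Phi$. Thus the statement becomes the assertion that the $\Phi$-length of an arc of a horocycle equals the Euclidean angle which that arc subtends at the Euclidean center of the circle carrying it.

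The key point is the following: if one parametrizes a horocycle by \emph{Euclidean} arclength on its circle, the resulting parametrization already has constant $\Phi$-speed equal to $1$. Concretely, let $\gamma(\theta) = p + r e^{i\theta}$ describe the Euclidean circle underlying $\gamma_0$, where $p$ is its Euclidean center and $r$ its Euclidean radius; tangency to the unit circle forces $r = 1 - |p|$. Writing $s(\theta) = \Re(\bar p\, e^{i\theta})$, so that $|\gamma(\theta)|^2 = |p|^2 + r^2 + 2 r s(\theta)$, the relation $r = 1-|p|$ gives $1 - |\gamma(\theta)|^2 = 2r\,(|p| - s(\theta))$, which is positive on the open arc lying inside the disc. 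A direct computation with $\Phi = \sqrt{g} - \eta$, where $g$ is the Poincar\'e metric and $\eta = 2(x\,dy - y\,dx)/(1-x^2-y^2)$, then yields $\sqrt{g}(\dot\gamma(\theta)) = 1/(|p| - s(\theta))$ and $\eta(\dot\gamma(\theta)) = (1 - |p| + s(\theta))/(|p| - s(\theta))$, whose difference is $\Phi(\dot\gamma(\theta)) \equiv 1$. (That the $\Phi$-speed is constant along $\gamma_0$ is automatic, since $\gamma_0$ is a $\Phi$-geodesic; the arithmetic only serves to pin down the value of the constant in the Euclidean-angle parameter.)

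Granting this, write $z = \gamma(\theta_1)$ and $w = \gamma(\theta_2)$ with $\theta_1 < \theta_2$ chosen so that $\theta \mapsto \gamma(\theta)$ traverses $\gamma_0$ in its own (counterclockwise) orientation — which is exactly the orientation convention for oriented horocycles in the Poincar\'e disc model. Then $d_\Phi(z,w) = \int_{\theta_1}^{\theta_2} \Phi(\dot\gamma(\theta))\,d\theta = \theta_2 - \theta_1$, which is precisely $\angle(z,p,w)$ read as the counterclockwise angle from the ray $pz$ to the ray $pw$; and since $z \ne w$ this angle lies strictly between $0$ and $2\pi$, which gives the final assertion. I do not expect a genuine obstacle here: the only points needing attention are bookkeeping ones — checking that $p \ne 0$ and $r > 0$ so that the parametrization and the identity $r = 1-|p|$ are legitimate, and fixing the branch of the angle so that it matches the orientation of the Poincar\'e-disc horocycle — together with the purely mechanical verification that $\sqrt{g}(\dot\gamma) - \eta(\dot\gamma) = 1$.
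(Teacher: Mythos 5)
Your proposal is correct and takes essentially the same route as the paper: you verify that the Euclidean-angle parametrization of a horocycle has unit $\Phi$-speed (the paper checks this only for the circles $(1-r+r\cos t,\,r\sin t)$ tangent at $(1,0)$ and then invokes rotational symmetry and homogeneity, while you carry out the general-center computation directly), and then conclude via the minimality and uniqueness statement of Lemma \ref{CMmin}. One small slip of wording: your parameter is the Euclidean \emph{angle} $\theta$ about the center $p$, not Euclidean arclength (which differs by the factor $r$), but your computation and the final identification $d_\Phi(z,w)=\theta_2-\theta_1=\angle(z,p,w)$ correctly use $\theta$, so the argument stands.
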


\begin{proof}
		 Consider the parametrized circle $(1-r+r\cos t , r \sin t)$, $t \in [-\pi, \pi]$. Its Finslerian speed is 1 for any $0<r<1$. Indeed,
a direct calculation reveals that with $\Phi$ from (\ref{horofinsler}),
$$ \Phi( 1-r+r \cos t, r \sin t, -r \sin t, r \cos t) = 1. $$
Since this circle  is tangent to the boundary of the unit disc at the point $(1,0)$, it follows that it is a unit-speed geodesic with respect to the metric $\Phi$. By symmetry and homogeneity it follows that the Finslerian arclength along any horocycle is the same as \emph{angular speed} with respect to the Euclidean center of the horocycle. Since each horocycle is a minimizing geodesic with respect to $\Phi$, the proposition follows.
\end{proof}

\begin{remark}
	One might ask whether it is possible to find a Finsler structure $\tilde{\Phi}$ whose geodesics are oriented horocycles in their \emph{hyperbolic arc-length} parametrization. The answer is negative. Indeed, in such a metric the distance between any two points will be the length of the oriented horocycle joining them, and therefore the distance function will be given by
\begin{equation} d_{\tilde{\Phi}}(z,w) = 2\sinh(d_{g}(z,w)/2), \qquad z,w \in \HH^2, \label{eq_1010} \end{equation}
where $g$ denotes the hyperbolic metric. Let $v \in TM$ be a unit vector with respect to the hyperbolic metric and let $\gamma$ be a hyperbolic geodesic with initial velocity $v$. Then
$$\tilde\Phi(v) = \lim_{t \searrow 0}d_{\tilde\Phi}(\gamma(0),\gamma(t))/t = \frac{d}{dt}\Big\vert_{t = 0}2\sinh(t/2) = 1.$$
It follows that $\tilde \Phi$ coincides with the hyperbolic metric, a contradiction to (\ref{eq_1010}).
% We did not rule out the possibility that a \emph{Lagrangian} exists whose minimizers are the set of unit-speed oriented horocycles. Such a Lagrangian cannot be homogeneous by the above remark, and apriori it might  be time-dependent.
\end{remark}

%
%\begin{remark}
%	It is possible to prove Lemma \ref{detdFaffine} in an invariant way, which also generalizes to variations through an arbitrary family of unit-speed curves on Riemannian surfaces, and derive a Jaboci-type ODE for the Jacobian determinant of the variation in terms of the Gauss curvature of the surface and the geodesic curvature of the curves in the family. Such a computation can be found in \cite{A}.
%\end{remark}

\section{Proof of the inequality via horocyclic needle decomposition}
\label{proofsec}

The main tool we use in the proof of Theorem \ref{thm318} is the following theorem which, for each choice of two densities $\rho_1$ and $\rho_2$ with the same total mass, provides a disintegration of the hyperbolic measure into one dimensional measures (``needles''), each supported on a horocycle arc, such that the integrals of $\rho_1$ on $\rho_2$ on each needle are equal. The theorem will be used to localize the horocyclic Brunn-Minkowski inequality into a one-dimensional inequality on each needle.

\medskip
Throughout this section we shall denote by $\mu$  the area measure on the hyperbolic plane $\HH^2$.

\begin{theorem}\label{horodecomp}
	Let $\rho_1, \rho_2: \HH^2 \rightarrow [0, \infty)$ be $\mu$-integrable, compactly-supported functions with
	$$ \int_{\HH^2} \rho_1 d \mu = \int_{\HH^2} \rho_2 d \mu. $$
	Then there is a collection $\Lambda$ of disjoint oriented horocycle arcs, a measure $\nu$ on $\Lambda$ and a family $\{\mu_\gamma\}_{\gamma \in \Lambda}$ of Borel measures on $\HH^2$ such that the following hold:
	\begin{enumerate}
		\item[(i)] For all $\gamma\in \Lambda $, the measure $\mu_{\gamma}$ is supported on $\gamma$.
		\item[(ii)] (``disintegration of measure'') For any measurable set $S \susbeteq \HH^2$,
		\begin{equation}\label{horodisinteqn}
		\mu(S) = \int_{\Lambda} \mu_{\gamma}(S) d \nu(\gamma).
		\end{equation}
		
		\item[(iii)] (``mass balance'') For $\nu$-almost any $\gamma \in \Lambda$,
		\begin{equation}\label{MBhorodecomp}	\int_{\HH^2} \rho_1 d \mu_{\gamma} = \int_{\HH^2} \rho_2 d \mu_{\gamma}, \end{equation}
		and moreover
		\begin{equation}\label{MBhoroendsdecomp}	\int_{\HH^2} \rho_1 d \mu_{\gamma^+} \le \int_{\HH^2} \rho_2 d \mu_{\gamma^+} \end{equation}
		whenever $\gamma^+$ is a positive end of $\gamma$. Here a curve $\gamma^+$ is said to be a positive end of $\gamma$ if it is a restriction of $\gamma$ to a subinterval with the same upper endpoint, and the measure $\mu_{\gamma^+}$ is the restriction of $\mu_\gamma$ to the image of $\gamma^+$.
		\item[(iv)] (``affine density of needles'') For $\nu$-almost every $\gamma \in \Lambda$, the density of $\mu_\gamma$ with respect to arclength along $\gamma$ is affine-linear, except when $\gamma$ is a singleton, in which case the measure $\mu_\gamma$ is a Dirac mass.
	\end{enumerate}
\end{theorem}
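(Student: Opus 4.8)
The plan is to derive Theorem \ref{horodecomp} from the needle decomposition theorem for Finsler manifolds, Theorem \ref{finslerdecomp}, applied to the Randers metric $\Phi$ of \eqref{horofinsler}, together with the local computation of Lemma \ref{detdFaffine}. Recall from Lemma \ref{CMmin} that any two points of the Poincar\'e disc $D$ are joined by a unique $\Phi$-minimizing geodesic, and that every $\Phi$-geodesic is, as an unoriented subset of $\HH^2$, an oriented horocycle arc. Since $\rho_1$ and $\rho_2$ are compactly supported with equal total mass, the $L^1$ optimal transport problem of moving $\rho_1\,d\mu$ to $\rho_2\,d\mu$ with respect to the (non-symmetric) distance $d_\Phi$ has finite cost, and its Kantorovich dual is attained by some function $u$ that is $1$-Lipschitz with respect to $d_\Phi$. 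First I would invoke the conclusion of Theorem \ref{finslerdecomp}: the ``transport set'' associated with $u$ decomposes into maximal transport rays --- unit-speed $\Phi$-geodesics $\gamma$ along which $u$ decreases at unit rate --- and $\mu$ restricted to this set disintegrates as $\mu = \int_\Lambda \mu_\gamma\,d\nu(\gamma)$ with $\mu_\gamma$ supported on $\gamma$, while on the (null) complement of the transport set the conditional measures are Dirac masses sitting on singleton needles. Reading the rays as oriented horocycle arcs gives items (i) and (ii), with the arcs staying inside a bounded region of $D$ so that the incompleteness of $(D,\Phi)$ near $\partial D$ plays no role.

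For item (iii) I would use the standard structure of the $L^1$ needle decomposition: along each ray $\gamma$ the potential $u$ is affine with unit slope, the restriction of the transport plan to $\gamma$ is the monotone rearrangement pushing $\rho_1\,d\mu_\gamma$ onto $\rho_2\,d\mu_\gamma$ in the direction of decreasing $u$, and in particular the two measures have equal total mass, which is \eqref{MBhorodecomp}. A positive end $\gamma^+$ of $\gamma$ is a terminal segment sharing the endpoint at which $u$ is smallest; since the monotone rearrangement can only transport mass \emph{into} such a terminal segment, one obtains $\int\rho_1\,d\mu_{\gamma^+} \le \int\rho_2\,d\mu_{\gamma^+}$, which is \eqref{MBhoroendsdecomp}. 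The one genuinely fiddly point here is matching orientations: the reparametrization relating a $\Phi$-geodesic to an oriented horocycle is orientation preserving, so the ``upper endpoint'' of the horocycle coincides with the sink end of the transport ray; because $\Phi$ is not reversible this identification must be made consistently, but it is only bookkeeping.

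Item (iv) is the crux, and this is where the special geometry of horocycles enters. The disintegration produced by Theorem \ref{finslerdecomp} gives, for $\nu$-a.e. non-degenerate $\gamma$, a conditional measure $\mu_\gamma$ that is absolutely continuous along $\gamma$ with density equal, up to a multiplicative constant depending on $\gamma$, to the Jacobian of the ``ray map'' $F$ that parametrizes a neighbourhood of $\gamma$ by the pair (transverse label of a nearby ray, arclength along it). Each nearby ray is again an oriented horocycle, so $\partial_s F$ is a Jacobi-type field for a one-parameter family of oriented horocycles; Lemma \ref{detdFaffine} computes, directly in the hyperbolic plane using the explicit parametrizations \eqref{alphaformula} and \eqref{parametereq}, that $\det dF$, re-expressed as a function of \emph{hyperbolic} arclength along $\gamma$, is affine-linear. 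Morally, the geodesic curvature $1$ of horocycles cancels the curvature $-1$ of $\HH^2$ and the relevant Jacobi equation becomes $J'' = 0$. Since $\Phi$-arclength and hyperbolic arclength along a horocycle differ by a fixed reparametrization, it is cleanest to carry out this computation in hyperbolic arclength from the start; feeding its output back into the disintegration yields that $d\mu_\gamma$ has affine-linear density with respect to hyperbolic arclength, degenerating to a Dirac mass exactly for the singleton needles, which is (iv).

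The main obstacle, as I see it, is not a single hard calculation but the interface between the abstract mass-transport machinery and the concrete horocyclic geometry: one must make sure that the needle decomposition genuinely applies to the non-reversible, non-complete Randers manifold $(D,\Phi)$ (this is why Theorem \ref{finslerdecomp} is established separately), that the conditional densities it outputs are honestly given by the Jacobian of the ray map rather than merely by a one-sided differential inequality, and that converting from $\Phi$-arclength to hyperbolic arclength along the needles turns the a priori only concave Finslerian density into a genuinely affine one. Once Lemma \ref{detdFaffine} is in place, assembling these pieces is routine.
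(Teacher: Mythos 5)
Your plan is essentially the paper's own route: apply the Finslerian needle decomposition, Theorem \ref{finslerdecomp}, to the Randers metric \eqref{horofinsler}, read the transport rays as oriented horocycle arcs to get (i)--(iii) (your orientation bookkeeping is the paper's with the sign of the potential flipped, and is internally consistent), and then reparametrize the ray map to hyperbolic arclength and invoke Lemma \ref{detdFaffine} for (iv); the regularity needed to apply that lemma is exactly what the paper extracts from Corollary \ref{straightcor}(c) and the smooth formulae \eqref{parametereq}.

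One step in your item (iv) is genuinely missing, though it is small. The disintegration does not hand you the signed Jacobian as the needle density: the density of $\mu_\gamma$ along the reparametrized needle is $e^{|y|}\,|\det d\tilde F(y,\tau)|$, the \emph{absolute value} of the determinant. Lemma \ref{detdFaffine} makes $\tau \mapsto \det d\tilde F(y,\tau)$ affine-linear, but the absolute value of an affine function is only piecewise affine (convex with a possible kink), which would ruin both conclusion (iv) and the concavity used later on each needle. So you must also show that $\det dF(y,\cdot)$ does not change sign along a needle; the paper isolates this as Lemma \ref{lem_sign}, proved by a short topological argument: a sign change of the transverse derivative along the ray would force nearby needles to cross the given one, contradicting the injectivity of the ray map $F$ (equivalently, the disjointness of transport rays). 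Adding this sign-constancy argument closes the gap; the rest of your assembly matches the paper.
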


Theorem \ref{horodecomp} is modeled after Theorem 1.2 in \cite{K1}, and its proof is very similar. In fact, since horocycles are geodesics of the Finsler metric \eqref{horofinsler}, conclusions (i)-(iii) of Theorem \ref{horodecomp} are essentially corollaries of Theorems 1.1 and 1.2 in \cite{Oh}, which extend \cite{K1} from Riemannian manifolds to Finsler manifolds. See also Theorem 2.7 in \cite{CM1}. The details of the proof of Theorem \ref{horodecomp} are discussed in the next section. For the rest of the present section we complete the proof of Theorem \ref{thm318} assuming Theorem \ref{horodecomp}.

\subsection{Proof of Theorem \ref{thm318}}
Recall the definition of the horocyclic Minkowski average: we fix an orientation of $\HH^2$ and define for $A,B \subseteq \HH^2$ and $0<\lambda<1$,
\begin{equation}\label{horosumdef}
	[A:B]_\lambda  := \left \{ \gamma(\lambda) \, ; \,  \gamma(0) \in A, \gamma(1) \in B, \ \textrm{and} \ \gamma \textrm{ is a constant-speed oriented horocycle} \right \}.
\end{equation}

\medskip
Let $A,B \subseteq \HH^2$ be nonempty and Borel measurable. The set $[A:B]_{\lambda}$ is the image of the Borel set $A \times B$ under
a continuous map, and it is well-known that such sets are Lebesgue measurable. We need to prove that
$$Area( [A:B]_\lambda )^{1/2} \geq (1-\lambda) \cdot Area(A)^{1/2} + \lambda \cdot Area(B)^{1/2}.$$
We may assume, by a standard approximation argument, that both sets are compact
(we work with a Radon measure, hence any Borel set contains a compact subset of approximately the same area). In particular,
both sets have finite area.

\medskip Consider first the case where both sets $A$ and $B$ have nonzero measure.
We apply Theorem \ref{horodecomp} with
\begin{equation}\label{rho12def} \rho_1 = \frac{\chi_A}{\mu(A)} \qquad \text{ and } \qquad \rho_2 = \frac{\chi_B}{\mu(B)}\end{equation}
and obtain measures $\{\mu_\gamma\}_{\gamma \in \Lambda}$ and $\nu$ with the properties (i)-(iv) above. Here $\chi_A$ is the indicator
function of the set $A$, which equals $1$ on $A$ and vanishes elsewhere.
The following lemma asserts that Brunn-Minkowski inequality holds on each individual needle $\mu_\gamma$.

\begin{lemma}\label{lem_924} For $\nu$-almost any $\gamma \in \Lambda$, if $0 < \mu_\gamma(A) < \infty$ then
\begin{equation}\label{muconcavity}
	\mu_\gamma([A:B]_\lambda)^{1/2} \ge (1-\lambda)\, \mu_\gamma(A)^{1/2} + \lambda\, \mu_\gamma(B)^{1/2}.
\end{equation}
\end{lemma}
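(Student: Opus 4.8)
The plan is to reduce \eqref{muconcavity} to a one-dimensional Brunn--Minkowski inequality along the horocycle arc $\gamma$ supporting $\mu_\gamma$. Parametrize $\gamma$ by hyperbolic arclength, say $\gamma : I \to \HH^2$ with $I \subseteq \RR$ an interval. By conclusion (iv) of Theorem \ref{horodecomp}, for $\nu$-almost every $\gamma$ the measure $\mu_\gamma$ has an affine-linear density $h(s) = \alpha + \beta s$ with respect to arclength $s$ along $\gamma$ (or $\mu_\gamma$ is a Dirac mass, in which case the inequality is trivial once one checks the endpoint condition, so assume we are in the affine case with $h \ge 0$ on $I$). Set $A' = \gamma^{-1}(A) \subseteq I$ and $B' = \gamma^{-1}(B) \subseteq I$, so that $\mu_\gamma(A) = \int_{A'} h$, $\mu_\gamma(B) = \int_{B'} h$. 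The key point I would isolate is that the midpoint map on horocycles, written in arclength coordinates, is \emph{affine}: if $\gamma$ is a constant-speed oriented horocycle joining $p = \gamma(s_0)$ to $q = \gamma(s_1)$, its $\lambda$-point is $\gamma((1-\lambda)s_0 + \lambda s_1)$. Hence $\gamma\big((1-\lambda)A' + \lambda B'\big) \subseteq [A:B]_\lambda$, and since $\gamma$ is injective on $I$ and arclength-preserving, $\mu_\gamma([A:B]_\lambda) \ge \int_{(1-\lambda)A' + \lambda B'} h$.

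So it remains to prove the following one-dimensional statement: if $h(s) = \alpha + \beta s \ge 0$ on an interval $I \supseteq A' \cup B' \cup ((1-\lambda)A'+\lambda B')$, and $\int_{A'} h, \int_{B'} h > 0$, then
\[
\left(\int_{(1-\lambda)A'+\lambda B'} h\right)^{1/2} \ge (1-\lambda)\left(\int_{A'} h\right)^{1/2} + \lambda \left(\int_{B'} h\right)^{1/2}.
\]
The natural way to handle a linear density is to change variables to make it flat: let $H(s) = \int_{s_*}^s h(u)\,du$ be an antiderivative (with $s_*$ a left endpoint reference point), a strictly increasing (where $h>0$), convex or concave quadratic function of $s$. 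On its image, $H^{-1}$ pushes Lebesgue measure forward to $h\,ds$. One checks that $H$ maps $(1-\lambda)A' + \lambda B'$ to a set containing $(1-\lambda)H(A') + \lambda H(B')$ when $H$ is \emph{concave} (i.e.\ $\beta \le 0$), and the reverse-type estimate when $H$ is convex --- this is exactly the mechanism in the one-dimensional needle estimates of \cite{K1}. Then the claim follows from the one-dimensional Brunn--Minkowski inequality $|(1-\lambda)E + \lambda F| \ge (1-\lambda)|E| + \lambda|F|$ for the linear case, combined with concavity of $t \mapsto t^{1/2}$ via the weighted power-mean inequality $((1-\lambda)a + \lambda b)^{1/2} \ge (1-\lambda)a^{1/2} + \lambda b^{1/2}$. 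The mass-balance conclusions \eqref{MBhorodecomp}--\eqref{MBhoroendsdecomp} are what guarantee, when $\beta \ne 0$, that the relevant sets all lie in the region where $h \ge 0$ and that the antiderivative trick applies on the correct subinterval; concretely, \eqref{MBhoroendsdecomp} forces $A'$ to sit ``to the left'' of $B'$ in the appropriate weighted sense so that the convex/concave change of variables does not overshoot the interval $I$.

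I expect the main obstacle to be the careful bookkeeping in the linear-density one-dimensional lemma: one must treat the increasing and decreasing cases of $h$ separately, verify that $(1-\lambda)A' + \lambda B'$ genuinely lands inside the interval $I$ on which the affine density is nonnegative (this is not automatic from measurability of $A,B$ alone and is precisely where \eqref{MBhoroendsdecomp} and the structure of the needle decomposition enter), and handle the degenerate situations where $h$ vanishes at an endpoint or where $\mu_\gamma(B) = 0$. The genuinely geometric input --- that horocyclic $\lambda$-midpoints are affine in arclength along the horocycle --- is immediate from the definition of constant-speed parametrization, so the proof is essentially a one-dimensional computation once Theorem \ref{horodecomp}(iv) is in hand.
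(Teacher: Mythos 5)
Your reduction to a one\mbox{-}dimensional statement along the needle, using the affine density from Theorem \ref{horodecomp}(iv) and the fact that the $\lambda$-point of a constant-speed horocycle is affine in arclength, is the right start, and the Dirac-mass case is handled correctly. But the central inclusion you use, $\gamma\bigl((1-\lambda)A'+\lambda B'\bigr)\subseteq[A:B]_\lambda$ with the \emph{full} Minkowski combination, is false. If $s_0\in A'$, $s_1\in B'$ and $s_0>s_1$, the reversed traversal of $\gamma$ from $\gamma(s_0)$ to $\gamma(s_1)$ has signed geodesic curvature $-1$ and is not an oriented horocycle; the unique oriented horocycle from $\gamma(s_0)$ to $\gamma(s_1)$ is a different curve, and its $\lambda$-point is in general not $\gamma((1-\lambda)s_0+\lambda s_1)$, nor on $\gamma$ at all. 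Hence only the directed combinations with $s_0\le s_1$ are known to land in $\gamma^{-1}([A:B]_\lambda)$, which is why the paper works with the directed sum $[\tilde A:\tilde B]_\lambda=\{(1-\lambda)t+\lambda s \,;\, t\in\tilde A,\ s\in\tilde B,\ t\le s\}$ and must then invoke the stochastic-dominance inequality of Lemma \ref{sdlemma} (a consequence of the positive-end mass balance \eqref{MBhoroendsdecomp}) together with a \emph{directed} one-dimensional Borell--Brascamp--Lieb inequality, Lemma \ref{dirBBL}, whose transport maps satisfy $\mathbf{t}(\xi)\le\mathbf{s}(\xi)$. You do cite \eqref{MBhoroendsdecomp}, but only to keep the sets inside the region where the density is nonnegative; its actual role is to repair exactly the directedness restriction that your inclusion ignores, and without it your lower bound $\mu_\gamma([A:B]_\lambda)\ge\int_{(1-\lambda)A'+\lambda B'}h$ is unjustified.

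The one-dimensional mechanism you sketch is also flawed. Flattening by the antiderivative $H$ and claiming the containment $H\bigl((1-\lambda)A'+\lambda B'\bigr)\supseteq(1-\lambda)H(A')+\lambda H(B')$ for concave $H$, followed by linear Brunn--Minkowski and concavity of $t\mapsto t^{1/2}$, would yield the arithmetic-mean bound
\begin{equation*}
\int_{(1-\lambda)A'+\lambda B'}h \;\ge\; (1-\lambda)\int_{A'}h+\lambda\int_{B'}h ,
\end{equation*}
which is false for affine densities: take $h(s)=1-s$ on $[0,1]$, $A'=[0,1/2]$, $B'=[0.9,1]$, $\lambda=1/2$; then the left-hand side equals $0.12$ while the right-hand side equals $0.19$ (and here $A'$ lies entirely to the left of $B'$, so the directed and full sums coincide and the dominance condition \eqref{meq} holds, so this cannot be rescued by appealing to the needle structure). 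The desired conclusion in this example is still true because what actually holds is only the $M_{1/2}$ bound, i.e.\ the one-dimensional Borell--Brascamp--Lieb inequality with $p=1$ applied to $F=\ell\chi_{\tilde A}$, $G=\ell\chi_{\tilde B}$ and $\ell\chi_{[\tilde A:\tilde B]_\lambda}$, proved by the Henstock--Macbeath monotone-transport argument; in the present setting it must moreover be established in its directed form, which is precisely the content of Lemmas \ref{sdlemma} and \ref{dirBBL} in the paper. So both halves of your reduction need these ingredients; as written, the proposal has a genuine gap.
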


We postpone the proof of Lemma \ref{lem_924} to subsection \ref{needlewisesubsec} below and proceed with the proof of Theorem \ref{thm318}. By \eqref{MBhorodecomp} and \eqref{rho12def}, for $\nu$-almost any $\gamma \in \Lambda$, if $0 < \mu_\gamma(A) <\infty$ then
\begin{equation}
\frac{\mu_{\gamma}(A)}{\mu(A)} = \frac{\mu_{\gamma}(B)}{\mu(B)}. \label{eq_1247}
\end{equation}
Since $A$ has finite measure, by \eqref{horodisinteqn} we know that $\mu_\gamma(A)  < \infty$ for $\nu$-almost any $\gamma \in \Lambda$. Thus by  \eqref{horodisinteqn}, \eqref{muconcavity} and \eqref{eq_1247},

\begin{align}
\begin{split}\label{finalcomputation}
	\mu\left([A:B]_\lambda\right) & \stackrel{\text{\eqref{horodisinteqn}}}{=} \int_{\Lambda}\mu_\gamma\left([A:B]_\lambda\right)d\nu(\gamma)\\
	& \stackrel{\text{\eqref{muconcavity}}}{\ge} \int_{\Lambda}\left((1-\lambda)\,\mu_\gamma(A)^{1/2} + \lambda\,\mu_\gamma(B)^{1/2}\right)^2d\nu(\gamma)\\
	& = \int_\Lambda\mu_\gamma(A)\left((1-\lambda) + \lambda\,\left(\frac{\mu_\gamma(B)}{\mu_\gamma(A)}\right)^{1/2}\right)^2d\nu(\gamma)\\
	& \stackrel{\text{\eqref{eq_1247}}}{=} \int_{\Lambda}\mu_\gamma(A)\left((1-\lambda) + \lambda\,\left(\frac{\mu(B)}{\mu(A)}\right)^{1/2}\right)^2d\nu(\gamma)\\
	& \stackrel{\text{\eqref{horodisinteqn}}}{=} \mu(A)\left((1-\lambda) + \lambda\,\left(\frac{\mu(B)}{\mu(A)}\right)^{1/2}\right)^2\\
	& = \left((1-\lambda) \, \mu(A)^{1/2} + \lambda\,\mu(B)^{1/2}\right)^2,
\end{split}
\end{align}
and inequality \eqref{eq_319} is proved.

\medskip
We now deal with the case where one of the sets has zero measure. In the case where $\mu(A) = \mu(B) = 0$, inequality (\ref{eq_319}) holds trivially.
Suppose that $\mu(A) = 0$ but $\mu(B) > 0$. Since $A$ is non-empty, we may pick a point $O \in A$. For $t > 0$ denote by
\begin{equation}\label{timesdef} t \times B := \left \{ \gamma(t) \, ; \, \gamma(0) = O, \gamma(1) \in B \, \textrm{and} \ \gamma\ \textrm{is a constant-speed oriented horocycle} \right \}, \end{equation}
the horocyclic dilatation of $B$ with respect to the point $O$.
 It is evident that
$$ \lambda \times B \subseteq [A:B]_{\lambda}. $$
Hence inequality (\ref{eq_319}) would follow once we show that for any $t > 0$,
\begin{equation}\label{horoscale}Area\left( t \times B \right) = t^2 \cdot Area(B), \qquad t >0.\end{equation}
To prove \eqref{horoscale}, consider ``horocyclic polar coordinates" centered at $O$, in which the point $(r,\theta)$ corresponds to $\gamma_\theta(r)$, where $\gamma_\theta$ is the unit-speed oriented horocycle emanating from $O$ at angle $\theta$ from some fixed direction. In the Poincar\'e disc model, if we take $O$ to be the origin, then it follows from (\ref{alphaformula})
that this coordinate map is $$ (r,\theta)\mapsto e^{i\theta} \cdot \frac{r}{r + 2i}. $$
By using formulae \eqref{Fformula} and \eqref{Jformula} with $\phi' = 1, t_0 = 0$ and $\lambda = 1$ (where $r,\theta$ are replaced by $t,y$),
it follows that in this coordinate system, the volume form is given by $r dr \wedge d\theta$, which implies \eqref{horoscale}. The case where $\mu(A) > 0$ and $\mu(B) =0$ can be reduced back to this case by reversing the orientation and changing the roles of $A$ and $B$.

\medskip
Inequality \eqref{eq_319} is therefore proven in all cases. We turn to the equality cases mentioned in Theorem \ref{thm318}. Suppose that $A$ is a singleton. Then $[A:B]_\lambda = \lambda \times B$, where in the definition of $\times$ we take the origin $O$ to be the unique member of $A$, and then by \eqref{horoscale}, $$ Area([A:B]_\lambda)^{1/2} = \lambda\cdot Area(B)^{1/2} = (1-\lambda)\cdot Area(A)^{1/2} + \lambda \cdot Area(B)^{1/2}. $$ If $B$ is a singleton, choose the other orientation of $\HH$ and apply the same argument to $[B:A]_\lambda$.

\medskip
Let us now analyze the case of concentric circles. Let $p \in \HH^2$ and $0<r_0<r_1$. Let $A,B$ denote the closed discs with center $p$ and radii $r_0,r_1$ respectively. The set $[A:B]_{\lambda}$ is connected, being the image of the connected set $A \times B$ under a continuous map.
By symmetry, $[A:B]_\lambda$ is a disc centered at the point $p$, and we denote its radius by $r_\lambda$. The area of a hyperbolic disc of radius $r$ is $4\pi \sinh^2(r/2)$, and therefore, in order to prove that equality holds in \eqref{eq_319}, we should prove that
\begin{equation}\label{sinhineq} \sinh(r_\lambda/2) =  (1-\lambda) \cdot \sinh(r_0/2)+ \lambda  \cdot \sinh(r_1/2).\end{equation}
By definition,
$$r_\lambda = \max_\gamma d(p,\gamma(\lambda)),$$
where the maximum is over all constant-speed oriented horocycles $\gamma$ with $\gamma(0) \in A$ and $\gamma(1) \in B$, i.e.,
\begin{equation}\label{gammaends} d(p,\gamma(0)) \leq r_0 \qquad \text{ and } \qquad d(p,\gamma(1)) \leq r_1. \end{equation}
 Thus to find $r_\lambda$ we need to maximize $d(p,\gamma(\lambda))$ over all constant-speed oriented horocycles $\gamma$ satisfying \eqref{gammaends}. If we let
$$h(p,q):=2\sinh(d(p,q)/2)$$
denote the length of a horocycle joining $p$ and $q$, then by monotonicity the same horocycle $\gamma$ would also maximize
$h(p,\gamma(\lambda))$.

\begin{lemma} The function $t\mapsto h(p,\gamma(t))$, for $\gamma$ a constant-speed horocycle, is convex.
\label{lem_904}
\end{lemma}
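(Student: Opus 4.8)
The plan is to move the horocycle into a convenient position and then reduce the claim to a one-variable calculus statement. Since the isometry group of $\HH^2$ acts transitively on horocycles, I would first apply a hyperbolic isometry which, in the upper half-plane model, takes $\gamma$ onto a horizontal horocycle $\{(x,y) : y = c\}$; this loses no generality, as isometries preserve hyperbolic distance and hence $h$, and they carry constant-speed curves to constant-speed curves (orientation plays no role, since $h$ depends only on $d$). Along $\{y = c\}$ the hyperbolic arclength element is $|dx|/c$, so a constant-speed horocycle has the form $\gamma(t) = (x_0 + st,\, c)$ with $x_0, s$ constant; in particular its first coordinate $x(t)$ is affine in $t$. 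If $s = 0$ then $\gamma$ is a point and the statement is trivial, so assume $s \neq 0$.

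Next I would write $h$ out explicitly. Putting $p = (x_p, y_p)$ and using $h(p,q)^2 = 4\sinh^2(d(p,q)/2) = 2(\cosh d(p,q) - 1)$ together with the standard upper half-plane formula $\cosh d(p,q) = 1 + \frac{|p-q|^2}{2 y_p y_q}$ (with $|\cdot|$ the Euclidean norm), one obtains
\[
  h(p, \gamma(t))^2 \;=\; \frac{(x(t) - x_p)^2 + (c - y_p)^2}{c\, y_p}.
\]
Because $x(t)$ is affine with nonzero slope, the right-hand side is a polynomial of degree exactly $2$ in $t$ with positive leading coefficient, and it is bounded below by $(c - y_p)^2/(c\,y_p) \ge 0$. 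Hence it can be written as $\alpha (t - t_0)^2 + \sigma^2$ for some $\alpha > 0$, $t_0 \in \RR$ and $\sigma \ge 0$.

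It then remains to observe that $t \mapsto h(p, \gamma(t)) = \sqrt{\alpha(t - t_0)^2 + \sigma^2}$ is convex: it is the composition of the affine map $t \mapsto \bigl(\sqrt{\alpha}\,(t - t_0),\, \sigma\bigr) \in \RR^2$ with the Euclidean norm. (Alternatively, for $\sigma > 0$ one computes $\bigl(\sqrt{\alpha(t-t_0)^2 + \sigma^2}\bigr)'' = \alpha \sigma^2 \bigl(\alpha(t-t_0)^2 + \sigma^2\bigr)^{-3/2} \ge 0$, and for $\sigma = 0$ the function is $\sqrt{\alpha}\,|t - t_0|$, still convex.) There is no serious obstacle in this argument; the only point worth flagging is that $\sqrt{\,\cdot\,}$ of a general upward-opening parabola need not be convex — what makes it work here is exactly that our parabola never becomes negative, which in the picture above reflects the fact that, restricted to a horizontal horocycle, $h(p,\cdot)^2$ is the squared Euclidean distance to $p$ divided by the constant $c\, y_p$. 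The same structure is visible model-independently: in the hyperboloid model a unit-speed horocycle is a curve $t \mapsto A + tB + t^2 C$ with $C$ null and $\langle B, C\rangle = 0$, while $q \mapsto h(p,q)^2 = \langle p - q,\, p - q\rangle$ is the restriction of the ambient Minkowski quadratic form, so $h(p,\gamma(t))^2$ is once again manifestly a nonnegative quadratic in $t$.
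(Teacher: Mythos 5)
Your proof is correct and follows essentially the same route as the paper: normalize by an isometry so that the horocycle is a horizontal line in the upper half-plane (the paper additionally places $p=i$), compute $h(p,\gamma(t))^2$ explicitly as a nonnegative quadratic in $t$, and conclude convexity of its square root. The extra care you take with the degenerate case and with why the square root of this particular parabola is convex is fine but not a departure from the paper's argument.
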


\begin{proof} By the symmetries of the hyperbolic plane, it suffices to consider the point $p=i$ in the upper half plane and the horocycle $\gamma(t) =  at + yi$ where $a,y>0$. In this case $$ h(i,at+yi) = \sqrt{a^2t^2 + (1-y)^2}/\sqrt y, $$
 which is a convex function of $t$.
 \end{proof}

By Lemma \ref{lem_904}, for every constant-speed horocycle $\gamma$ satisfying \eqref{gammaends},
\begin{align*}
	h(p,\gamma(\lambda)) & \le (1-\lambda) \cdot h(p,\gamma(0)) + \lambda \cdot h(p,\gamma(1)) \\
	& \leq (1-\lambda) \cdot 2 \sinh(r_0/2) + \lambda \cdot 2 \sinh(r_1/2).
\end{align*}
Equality is attained when $\gamma$ is a constant-speed horocycle through $p$
satisfying $\gamma(0) \in \partial A$ and $\gamma(1) \in \partial B$, because then
equality holds in \eqref{gammaends} and the function $t\mapsto h(p,\gamma(t))$ is affine-linear. Therefore,
$$ 2 \sinh (r_\lambda/2) = \max_\gamma h(p,\gamma(\lambda)) = (1-\lambda) \cdot 2 \sinh(r_0/2) + \lambda \cdot 2 \sinh(r_1/2),$$
which gives \eqref{sinhineq}. We have thus shown that equality is attained for concentric discs. This finishes the proof of Theorem \ref{thm318}.

\begin{remark}\label{horosuccinct}
The succinct formulation
(\ref{eq_1147}) admits a horocyclic analogue, if one is willing to make some non-canonical choices. Let $O \in \HH^2$ be a marked point viewed as ``the origin'', and for every $B \subseteq \HH^2$ and $t > 0$ define $t \times B$ as in (\ref{timesdef}).
Recall that the hyperbolic area of horocyclic dilatation of $B$ with respect to the point $O$ satisfies the scaling property (\ref{horoscale}).
If we denote $[A:B] = 2 \times [A:B]_{1/2}$, then Theorem \ref{thm318} implies that for any Borel measurable, non-empty sets $A, B \subseteq \HH^2$,
\begin{equation} Area([A:B])^{1/2} \geq Area(A)^{1/2} + Area(B)^{1/2} \label{eq_325} \end{equation}
with equality when $A$ and $B$ are concentric discs. Indeed, \eqref{eq_325} is a consequence of \eqref{eq_319} with $\lambda = 1/2$, together with (\ref{horoscale}).
\end{remark}

The proof of Theorem 1.1 can be modified  to produce the following horocyclic Borell-Brascamp-Lieb inequality, which contains \eqref{eq_319} as a special case. The case $p=0$ is a horocyclic version of the Pr{\'e}kopa-Leindler inequality. For $x,y \in \HH^2$ and $0<\lambda<1$ write $[x:y]_\lambda$ for the point $\gamma(\lambda)$, where $\gamma$ is the oriented constant-speed horocycle satisfying $\gamma(0) = x$ and $\gamma(1) = y$.

\begin{theorem} Let $f,g,h: \HH^2 \rightarrow [0, \infty)$ be three measurable functions, and assume that $f$ and $g$ are integrable with a non-zero integral.
Let $0 < \lambda <1$ and $p \in [-1/2, +\infty]$. Assume that for any $x,y \in \HH^2$ with $f(x) g(y) > 0$,
\begin{equation}  h \left( [x:y]_{\lambda} \right) \geq M_p(f(x), g(y) \, ; \lambda).
\label{eq_1719} \end{equation}
Define $q = p / (1 + 2 p)$, where by continuity $q = -\infty$ if $p = -1/2$ and $q = 1/2$ if $p = +\infty$. Then,
\begin{equation*}  \int_{\HH^2} h \geq M_{q} \left( \int_{\HH^2} f, \int_{\HH^2} g \, ; \lambda \right). \label{eq_1734} \end{equation*}
\label{horoBBL}
\end{theorem}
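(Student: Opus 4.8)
The plan is to follow exactly the same needle-decomposition strategy used for Theorem \ref{thm318}, replacing the role of the indicator functions $\chi_A/\mu(A)$ and $\chi_B/\mu(B)$ by the normalized densities $f/\int f$ and $g/\int g$. First I would reduce to the case where $f$ and $g$ are compactly supported and where $h$ coincides with the ``sup-convolution'' $h(z) = \sup\{ M_p(f(x),g(y);\lambda) : [x:y]_\lambda = z,\ f(x)g(y) > 0\}$, which is the pointwise-smallest function satisfying the hypothesis \eqref{eq_1719}; as in the proof of Theorem \ref{thm318} one checks this $h$ is measurable since it is built from a continuous map applied to a Borel set. If either $\int f = 0$ or $\int g = 0$ the inequality is degenerate (when $p \ge 0$ one of the arguments of $M_q$ on the right is zero, and the cases $p\in[-1/2,0)$ require the same horocyclic-dilation argument as in the $\mu(A)=0$ case of Theorem \ref{thm318}, using the scaling \eqref{horoscale}), so assume both integrals are positive and, after normalizing, equal to $1$.

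Next I would apply Theorem \ref{horodecomp} to $\rho_1 = f/\int f$ and $\rho_2 = g/\int g$, obtaining the family of oriented-horocycle needles $\{\mu_\gamma\}_{\gamma\in\Lambda}$, the measure $\nu$, and properties (i)–(iv). The mass-balance property \eqref{MBhorodecomp} gives $\frac{1}{\int f}\int f\, d\mu_\gamma = \frac{1}{\int g}\int g\, d\mu_\gamma$ for $\nu$-a.e.\ $\gamma$; call this common value $m_\gamma$, and note $\int m_\gamma\, d\nu(\gamma) = 1$ after integrating against $\nu$ and using \eqref{horodisinteqn}. The crux is then the one-dimensional statement: for $\nu$-a.e.\ $\gamma$, writing the density of $\mu_\gamma$ along arclength as the affine function supplied by property (iv),
\[
\int_{\HH^2} h\, d\mu_\gamma \ \geq\ M_q\!\left( \int_{\HH^2} f\, d\mu_\gamma,\ \int_{\HH^2} g\, d\mu_\gamma\ ;\ \lambda\right).
\]
This is a Borell–Brascamp–Lieb inequality on a single horocyclic needle, and I expect it to reduce — exactly as Lemma \ref{lem_924} reduces the Brunn–Minkowski case — to the classical one-dimensional Borell–Brascamp–Lieb inequality once we understand how arclength along the horocycle $\gamma$ transforms under the map $(x,y)\mapsto [x:y]_\lambda$. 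Concretely, one parametrizes all three needles by a common real parameter via the horocyclic structure, uses the affine-density conclusion (iv) together with the explicit Jacobian computation referenced via \eqref{Fformula}–\eqref{Jformula} to see that the change of variables on the needle is governed by a concave (in fact, the relevant one-dimensional ``interpolation'') map, and then invokes the statement that if $h(\text{interpolant}) \ge M_p(f,g)$ pointwise on the line with the appropriate Jacobian behaviour, then $\int h \ge M_{p/(1+2p)}(\int f, \int g)$. The exponent arithmetic $q = p/(1+2p)$ is precisely the one-dimensional Borell–Brascamp–Lieb exponent combined with the ``$+1$'' coming from the one affine dimension of the needle, matching the ``$1/2$'' powers ($q=1/2$ when $p=+\infty$) that appear in the area version.

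Finally, I would integrate the needle-wise inequality over $\nu$ and assemble the global bound. Writing $a_\gamma = \int f\, d\mu_\gamma$, $b_\gamma = \int g\, d\mu_\gamma$, so that $a_\gamma = b_\gamma = m_\gamma$ up to the fixed normalizing constants $\int f = \int g = 1$, we get by \eqref{horodisinteqn}
\[
\int_{\HH^2} h\ =\ \int_\Lambda \!\left(\int_{\HH^2} h\, d\mu_\gamma\right) d\nu(\gamma)\ \geq\ \int_\Lambda M_q(a_\gamma, b_\gamma;\lambda)\, d\nu(\gamma),
\]
and since $a_\gamma$ and $b_\gamma$ are proportional with the same ratio $\int f / \int g$ across all $\gamma$, the homogeneity of $M_q$ makes $\gamma\mapsto M_q(a_\gamma,b_\gamma;\lambda)$ linear in $m_\gamma$; integrating and using $\int m_\gamma\, d\nu = 1$ collapses the right-hand side to $M_q(\int f, \int g;\lambda)$, exactly as the chain \eqref{finalcomputation} collapses in the area case. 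The main obstacle I anticipate is the needle-wise one-dimensional inequality: one must carefully identify the correct common parametrization of the three horocycles and verify that the induced one-dimensional transport map has the concavity/Jacobian property that the classical Borell–Brascamp–Lieb inequality requires, keeping track of where the extra affine dimension (the needle density being affine rather than constant) shifts the exponent from $p$ to $q = p/(1+2p)$. Once Lemma \ref{lem_924} is in hand for the $p=\infty$ (Brunn–Minkowski) case, the general $p$ case should follow the same template with the classical one-dimensional Borell–Brascamp–Lieb inequality in place of one-dimensional Brunn–Minkowski.
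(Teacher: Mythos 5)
Your overall strategy coincides with the paper's: apply Theorem \ref{horodecomp} to $\rho_1 = f/\int_{\HH^2} f$ and $\rho_2 = g/\int_{\HH^2} g$ (after reducing to compactly supported $f,g$), prove a needle-wise inequality $\int h\, d\mu_\gamma \ge M_q\bigl(\int f\, d\mu_\gamma, \int g\, d\mu_\gamma;\lambda\bigr)$, and then assemble the global bound from \eqref{horodisinteqn} and \eqref{MBhorodecomp} using the homogeneity of $M_q$, exactly as in \eqref{finalcomputation}; your final paragraph carries out that assembly correctly, and your exponent heuristic $q=p/(1+2p)$ is the right arithmetic.

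The gap is in the needle-wise step, which is the only genuinely new content beyond Theorem \ref{thm318} and which you leave as an expectation (``I expect it to reduce \dots to the classical one-dimensional Borell--Brascamp--Lieb inequality'', via an unspecified ``concave change of variables''). Two ingredients are missing. First, the mechanism by which the affine needle density $\ell$ shifts the exponent: the paper sets $F=\ell\cdot(f\circ\gamma)$, $G=\ell\cdot(g\circ\gamma)$, $H=\ell\cdot(h\circ\gamma)$, observes that horocyclic interpolation restricted to the needle is just linear interpolation of the arclength parameter, and combines hypothesis \eqref{eq_1719} with the affineness of $\ell$ through the H\"older-type mean inequality $M_p(a_1,b_1;\lambda)\,M_1(a_2,b_2;\lambda)\ge M_{\tilde p}(a_1a_2,b_1b_2;\lambda)$ with $\tilde p=p/(p+1)$, obtaining $H((1-\lambda)t+\lambda s)\ge M_{\tilde p}(F(t),G(s);\lambda)$ for $t\le s$; the one-dimensional result then gives exponent $\tilde p/(\tilde p+1)=p/(2p+1)=q$. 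Your ``Jacobian behaviour'' description does not identify this step. Second, the \emph{classical} one-dimensional BBL is not applicable: the pointwise hypothesis is only available for ordered pairs $t\le s$ (only oriented horocycles contribute), so one must use the directed version, Lemma \ref{dirBBL} -- already stated in the paper for all $p$, so no new lemma is needed -- and verify its stochastic-dominance hypothesis \eqref{sdeq} from the positive-end mass balance \eqref{MBhoroendsdecomp}, exactly as in Lemma \ref{sdlemma}; as written, an appeal to the classical BBL would fail at this point. A minor remark: replacing $h$ by the sup-convolution is unnecessary (the theorem assumes $h$ measurable and the argument uses $h$ as given), and the measurability of that supremum is not as automatic as you claim, so it is better to drop that reduction.
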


\begin{proof}
	We briefly sketch the proof, which follows the same lines as the proof of Theorem \ref{thm318}. Begin by applying Theorem \ref{horodecomp} to the functions
	$$\rho_1 : = \frac{f}{\int_{\HH^2} f} \qquad \text{ and } \qquad  \rho_2 : = \frac{g}{\int_{\HH^2} g}.$$
	(by a standard approximation argument one can assume that $f$ and $g$, and therefore also $\rho_1$ and $\rho_2$, are compactly supported). Next, we prove the analogue of Lemma \ref{lem_924}, namely, that for $\nu$-almost any $\gamma \in \Lambda$, if $0 < \int f d\mu_\gamma  < \infty$ then
	\begin{equation}\label{needleBBL}\int_{\HH^2}h d\mu_\gamma \ge M_q\left(\int_{\HH^2}fd\mu_\gamma,\int_{\HH^2}gd\mu_\gamma ; \lambda\right).\end{equation}
	In order to do so, fix some $\gamma \in \Lambda$, and pull back the needle measure $\mu_\gamma$ via the map $\gamma$ to obtain a measure on some interval $I \subseteq \RR$, which by Theorem \ref{horodecomp}(iv) has an affine-linear density $\ell$. Set
	$$F  = \ell \cdot (f \circ \gamma),  \qquad G  = \ell \cdot (g \circ \gamma), \qquad \text{ and } \qquad H  = \ell \cdot (h \circ \gamma).$$
	H{\"o}lder's inequality implies that with $\tilde{p} = p / (p+1)$,  for any $a_1,a_2, b_1, b_2 > 0$,
	$$ M_{p}(a_1,b_1;\lambda)\cdot M_{1}(a_2,b_2;\lambda) \ge M_{\tilde{p}}(a_1a_2,b_1b_2;\lambda). $$
	Here we interpret $\tilde{p}$ as $1$ when $p=\infty$, by continuity. Using this inequality, we conclude from the concavity of the function $\ell$ and from the condition \eqref{eq_1719} that
	$$H((1-\lambda)t + \lambda s) \ge M_{\tilde p}(F(t),G(s);\lambda)$$
	for any $t,s \in I$ with $t \le s$ and $F(t)G(s) > 0 $. We now apply Lemma \ref{dirBBL} to obtain \eqref{needleBBL}. The fact that the condition \eqref{sdeq} in Lemma \ref{dirBBL} holds is proved just as in the proof of Lemma \ref{sdlemma}. Finally, we use inequality \eqref{needleBBL} together with the disintegration of measure property \eqref{disinteqn} and the mass balance condition \eqref{MBhorodecomp} to finish the proof, as in the computation \eqref{finalcomputation}.
\end{proof}

\begin{remark} Consider the $n$-dimensional hyperbolic space $\HH^n$. Suppose that $A,B \subseteq \HH^n$ are two concentric balls, with radii $r_0, r_1$ respectively. Write $C \subseteq \HH^n$ for the collection of all midpoints of constant-speed curves of geodesic-curvature $1$ connecting a point in $A$ with a point in $B$.
	Then $C \subseteq \HH^n$ is a ball of radius $r_{1/2} > 0$,
	where relation (\ref{sinhineq}) holds true with $\lambda = 1/2$. This follows from an $n$-dimensional generalization of Lemma \ref{lem_904}. Hence,
	$$ f_n( Vol_n(C) ) = \frac{f_n(Vol_n(A)) + f_n(Vol_n(B))}{2}, $$
	where $Vol_n$ is hyperbolic volume, and $f_n(V) : = \sinh(R_n(V)/2)$ where $R_n(V)$ is the radius of a hyperbolic ball with volume $V$. 	%Explicilty: write $$ g_n(r) = \frac{n \pi^{n/2}}{ \Gamma(1+n/2)}  \int_0^r \sinh^{n-1} t \, dt $$
%	for the hyperbolic volume of a ball of radius $r$. Now set $ f_n(t) = \sinh(g_n^{-1}(t) / 2 )$.
 In the two-dimensional case, $f_2(t) = c \sqrt{t}$ for some universal constant $c > 0$, but for $n \geq 3$ the function $f_n$ is not so simple, and a horocyclic Brunn-Minkowski inequality for $n \ge 3$, in any reasonable definition of the horocyclic Minkowski sum, will not resemble the Euclidean Brunn-Minkowski inequality as it does in dimension two.
	\label{r550}
\end{remark}

\subsection{Brunn-Minkowski for horocyclic needles}\label{needlewisesubsec}

In this subsection we prove Lemma \ref{lem_924}: for $\nu$-almost any $\gamma \in \Lambda$, if $0 < \mu_\gamma(A) < \infty$ then
 \begin{equation}\label{muconcavityagain}
	\mu_\gamma([A:B]_\lambda)^{1/2} \ge (1-\lambda)\, \mu_\gamma(A)^{1/2} + \lambda\, \mu_\gamma(B)^{1/2}.
\end{equation}

\begin{proof}[Proof of Lemma \ref{lem_924}]
Suppose first that $\gamma$ is a singleton, in which case the measure $\mu_\gamma$ is a Dirac mass at a point $x \in \HH^2$. By the mass-balance condition \eqref{MBhorodecomp}, either $x \in A \cap B$, in which case also $x \in [A:B]_\lambda$ and both sides of \eqref{muconcavityagain} are equal, or $x \notin A \cup B$, in which case the right hand side vanishes.

\medskip
Thus we may restrict our attention to the case where $\gamma: I \rightarrow \HH^2$ is an oriented, unit-speed horocycle arc, where $I \subseteq \RR$ is some interval, which is possibly a ray or the full line (but not a point), and the density  of $\mu_\gamma$ with respect to arclength is affine linear. Thus $\mu_\gamma$ is the pushforward via the map $\gamma$ of a measure $m$ on $I$ which has an affine density with respect to the Lebesgue measure on $I$. Set
$$\tilde A = \gamma^{-1}(A), \qquad \tilde B = \gamma^{-1}(B),$$
and write also
\begin{equation}\label{directedsumdef}[\tilde A : \tilde B]_\lambda : = \{(1-\lambda)t + \lambda s \mid t \in \tilde A, s \in \tilde B, t \le s\}.\end{equation}
By the definitions \eqref{horosumdef} and \eqref{directedsumdef},
 $$[\tilde A:\tilde B]_\lambda \subseteq \gamma^{-1}([A:B]_\lambda).$$
Indeed, if $t \in \tilde A$ and $s \in \tilde B$ and $t \le s$, then the curve $\tau \mapsto \gamma(t + (s-t)\tau))$ is a constant speed oriented horocycle which visits $A$ at time $0$ and $B$ at time $1$, so $\gamma((1-\lambda)t + \lambda s) \in [A:B]_\lambda$. Note that it is necessary to require $t \le s$ because we only include oriented horocycles in the definition \eqref{horosumdef}, so we cannot traverse $\gamma$ backwards. Hence in order to prove \eqref{muconcavityagain} it suffices to show that
\begin{equation}\label{mconcavity}m([\tilde A: \tilde B]_\lambda)^{1/2} \ge (1-\lambda) \, m(\tilde A)^{1/2} + \lambda\, m(\tilde B)^{1/2}.\end{equation}

\medskip
The density of the measure $m$ is affine, and in particular concave. The 1-dimensional Borell-Brascamp-Lieb inequality (see e.g. \cite[Section 10]{gardner}) then implies that
$$m((1-\lambda)\, \tilde A + \lambda\, \tilde B)^{1/2} \ge (1-\lambda)\, m(\tilde A)^{1/2} + \lambda\, m(\tilde B)^{1/2}.$$
But the set $[\tilde A : \tilde B]_\lambda$ can be smaller than $(1-\lambda)\, \tilde A + \lambda \, \tilde B$: the definition \eqref{directedsumdef} includes the condition $t \le s$. Thus we need a slightly modified version of the 1-dimensional Borell-Brascamp-Lieb inequality. It requires an extra condition on $\tilde A$ and $\tilde B$ which we now establish.

\begin{lemma}\label{sdlemma}
	For $\nu$-almost any $\gamma \in \Lambda$,
		\begin{equation}\label{meq}
			\frac{m(\tilde A \cap [t,\infty))}{m(\tilde A)} \le \frac{m(\tilde B \cap [t,\infty))}{m(\tilde B)} \qquad \text{ for every } t \in \RR.
		\end{equation}
\end{lemma}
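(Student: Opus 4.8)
The plan is to obtain \eqref{meq} directly from the ``mass balance'' property, Theorem \ref{horodecomp}(iii), specialized to the densities $\rho_1 = \chi_A/\mu(A)$ and $\rho_2 = \chi_B/\mu(B)$ fixed in \eqref{rho12def}. The whole content of the lemma is that the ``positive end'' inequality \eqref{MBhoroendsdecomp} is precisely the stochastic-domination statement \eqref{meq} once it is transported from the curve $\gamma$ back to the parameter interval $I$, and divided by the total-mass equality \eqref{MBhorodecomp}.

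First I would record the transport dictionary. Fix $\gamma \in \Lambda$ for which the conclusions of Theorem \ref{horodecomp}(iii) hold; discarding singletons (handled by the mass-balance dichotomy already noted at the start of the proof of Lemma \ref{lem_924}), $\gamma : I \to \HH^2$ is a unit-speed oriented horocycle arc and $\mu_\gamma$ is the pushforward under $\gamma$ of the measure $m$ on $I$. Since a horocycle arc is injective, for any $t \in \RR$ the set $\gamma(I \cap [t,\infty))$ is the image of the positive end $\gamma^+ := \gamma|_{I \cap [t,\infty)}$, and for any Borel $S \subseteq \HH^2$ one has $\mu_{\gamma^+}(S) = \mu_\gamma\big(S \cap \gamma(I\cap[t,\infty))\big) = m\big(\gamma^{-1}(S) \cap [t,\infty)\big)$, using that $m$ is carried by $I$. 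Taking $S = A$ and $S = B$ gives
$$ \int_{\HH^2}\rho_1\,d\mu_{\gamma^+} = \frac{m(\tilde A \cap [t,\infty))}{\mu(A)}, \qquad \int_{\HH^2}\rho_2\,d\mu_{\gamma^+} = \frac{m(\tilde B \cap [t,\infty))}{\mu(B)}, $$
and the case $t = -\infty$ (i.e.\ $\gamma^+ = \gamma$) likewise gives $\int\rho_1\,d\mu_\gamma = m(\tilde A)/\mu(A)$ and $\int\rho_2\,d\mu_\gamma = m(\tilde B)/\mu(B)$.

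Next I would simply combine the two halves of Theorem \ref{horodecomp}(iii). The equality \eqref{MBhorodecomp} becomes $m(\tilde A)/\mu(A) = m(\tilde B)/\mu(B) =: c$, while the positive-end inequality \eqref{MBhoroendsdecomp}, valid for every positive end and hence for every $t \in \RR$, becomes $m(\tilde A \cap [t,\infty))/\mu(A) \le m(\tilde B \cap [t,\infty))/\mu(B)$. In the situation relevant to Lemma \ref{lem_924} we have $m(\tilde A) = \mu_\gamma(A) \in (0,\infty)$, so $c > 0$ and also $m(\tilde B) = c\,\mu(B) > 0$; dividing the inequality by $c$ yields \eqref{meq}. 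For $t$ below or above $I$ both sides of \eqref{meq} are trivially $1$ or $0$, so no separate argument is needed there.

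There is essentially no analytic difficulty here — the step is bookkeeping — and the only point that deserves attention is the order of quantifiers: \eqref{meq} must hold for all $t$ at once, for $\nu$-almost every $\gamma$, rather than for each $t$ separately. But this is exactly the form in which \eqref{MBhoroendsdecomp} is asserted in Theorem \ref{horodecomp}(iii) (the inequality there holds for \emph{every} positive end of a $\nu$-a.e.\ fixed $\gamma$), so nothing extra is required. The genuine work is hidden in Theorem \ref{horodecomp}, whose proof is carried out in Section \ref{decompsec}.
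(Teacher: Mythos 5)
Your proposal is correct and follows essentially the same route as the paper: both arguments simply transport \eqref{MBhoroendsdecomp} to the parameter interval via the pushforward identity $\mu_\gamma=\gamma_\#m$ and normalize using the mass balance \eqref{MBhorodecomp} (equivalently \eqref{eq_1247}), so the lemma reduces to bookkeeping. Your explicit remarks on injectivity of the horocycle arc and on the positivity of the common ratio $c$ are fine and consistent with the hypotheses under which the lemma is applied.
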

%If the measures $m\vert_{\tilde A}$ and $m\vert_{\tilde B}$ were proportional to laws of random variables $X$ and $Y$, respectively, then we would say that $Y$ stochastically dominates $X$.
\begin{proof}[Proof of Lemma \ref{sdlemma}]
	By the definition of $m, \tilde A$ and $\tilde B$, inequality \eqref{meq} is equivalent to
		$$\frac{\mu_\gamma(A\cap\gamma(I\cap[t,\infty)))}{\mu_\gamma(A)} \le \frac{\mu_\gamma(B \cap \gamma (I \cap [t,\infty)))}{\mu_\gamma(B)} \qquad \text{for every } t\in \RR.$$
		By \eqref{rho12def} and \eqref{eq_1247}, this is equivalent to
		$$\int_{\gamma(I\cap[t,\infty))}\rho_1d\mu_\gamma \le \int_{\gamma(I\cap[t,\infty))}\rho_2d\mu_\gamma \qquad \text {for every $t \in \RR$,}$$
		which is precisely the content of \eqref{MBhoroendsdecomp}.
		% In words, we would like to prove that for $\nu$-a.e. $\gamma \in \Lambda$, every ``initial segment" of the curve $\gamma$ has more $\rho_1$-mass than $\rho_2$-mass. This is a consequence of \eqref{MBpositiveends}. In the Riemannian setting, this property can be found in Feldman and McCann \cite[Section 6]{FM} and essentially follows from argument in the proof of Evans and Gangbo \cite[Lemma 5.1]{EG}. The same argument applies to the Finslerian case.
\end{proof}
The following ``directed'' version of the one dimensional Borell-Brascamp-Lieb inequality is what we need in order to finish the proof of Lemma \ref{lem_924}. For $a,b \ge 0 $ and $p \in [-\infty,\infty]$ define
$$ M_p(a,b ; \lambda) = \left \{ \begin{array}{cc}
\big( (1-\lambda) a^p +  \lambda  b^p \big)^{1/p} & 0<p<\infty \text{ or }\\ &  -\infty< p < 0 \text{ and } ab > 0\\
a^{1-\lambda} b^{\lambda} & p = 0 \\
\max \{ a,b \} & p = +\infty \\
\min \{ a,b \} & p = -\infty
\\ 0 & -\infty< p < 0 \text{ and } ab = 0.
\end{array} \right.
$$
\begin{lemma}\label{dirBBL}
	Let $F,G$ be non-negative, integrable functions on an interval $I \subseteq \RR$ satisfying
	\begin{equation}\label{sdeq}
			\frac{\int_{I\cap[t,\infty)}F}{\int_IF} \le \frac{\int_{I\cap[t,\infty)}G}{\int_IG} \qquad \text{ for every } t \in I.
		\end{equation}
	Let $H$ be a non-negative integrable function satisfying
	\begin{equation}\label{dirBBLhyp}
		H((1-\lambda)t + \lambda s) \ge M_p(F(t),G(s);\lambda)
	\end{equation}
	for every $t,s \in I$ with $t \le s$ and $F(t)G(s) > 0$. Then
	\begin{equation}\label{dirBBLconc}
		\int_IH \ge M_{p/(p+1)}\left(\int_IF,\int_IG ; \lambda\right).
	\end{equation}
\end{lemma}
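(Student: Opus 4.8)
The plan is to reduce the directed one-dimensional Borell--Brascamp--Lieb inequality to the standard (non-directed) one by a change of variables that ``straightens out'' the hypothesis \eqref{sdeq}. First I would normalize so that $\int_I F = \int_I G = 1$; this is legitimate because replacing $F$ by $F/\int_I F$ and $G$ by $G/\int_I G$ multiplies $M_p(F(t),G(s);\lambda)$ by $M_p(1/\int F, 1/\int G;\lambda)$, and one checks directly that this forces the correct rescaling of $H$ and of the target $M_{p/(p+1)}$ — here the identity $M_p(a_1,b_1;\lambda)\,M_{p/(p+1)}(a_2,b_2;\lambda)\ge M_{p/(p+1)}(\ldots)$-type bookkeeping, or more simply the homogeneity of the $M_p$'s, does the work. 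After normalization, \eqref{sdeq} says precisely that the cumulative distribution functions satisfy $\overline{F}(t):=\int_{I\cap(-\infty,t]}F \le \int_{I\cap(-\infty,t]}G=:\overline{G}(t)$ for all $t$; equivalently, if $\Psi_F,\Psi_G:[0,1]\to \bar I$ denote the (generalized, right-continuous) inverse CDFs, then $\Psi_F(u)\le \Psi_G(u)$ for every $u\in[0,1]$.

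The key step is then the following substitution. For $u\in[0,1]$ set $t=\Psi_F(u)$ and $s=\Psi_G(u)$; then $t\le s$, so the hypothesis \eqref{dirBBLhyp} applies at this pair $(t,s)$ and yields
\[
H\big((1-\lambda)\Psi_F(u)+\lambda\Psi_G(u)\big)\ \ge\ M_p\big(F(\Psi_F(u)),\,G(\Psi_G(u));\lambda\big)
\]
for a.e.\ $u$ with the right-hand side positive. Writing $\varphi(u):=(1-\lambda)\Psi_F(u)+\lambda\Psi_G(u)$, which is non-decreasing (a convex combination of non-decreasing functions) with $\varphi'(u)=(1-\lambda)\Psi_F'(u)+\lambda\Psi_G'(u)$ a.e., and using $\Psi_F'(u)=1/F(\Psi_F(u))$, $\Psi_G'(u)=1/G(\Psi_G(u))$ (valid a.e.\ where the densities are positive), I would estimate
\[
\int_I H \ \ge\ \int_0^1 H(\varphi(u))\,\varphi'(u)\,du \ \ge\ \int_0^1 M_p\big(F(\Psi_F(u)),G(\Psi_G(u));\lambda\big)\cdot\Big[(1-\lambda)\tfrac{1}{F(\Psi_F(u))}+\lambda\tfrac{1}{G(\Psi_G(u))}\Big]\,du.
\]
The first inequality is the area/substitution formula for the monotone (possibly non-injective) map $\varphi$ — one has $\int_I H \ge \int_{\varphi([0,1])} H \ge \int_0^1 H(\varphi(u))\varphi'(u)\,du$. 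Now set $a(u)=F(\Psi_F(u))$, $b(u)=G(\Psi_G(u))$; the integrand is $M_p(a,b;\lambda)\cdot\big((1-\lambda)a^{-1}+\lambda b^{-1}\big)$, and the elementary pointwise inequality $M_p(a,b;\lambda)\cdot M_{-1}(a^{-1},b^{-1};\lambda)^{-1}\ge M_{p/(p+1)}(1,1;\lambda)=1$ — i.e., $M_p(a,b;\lambda)\big((1-\lambda)a^{-1}+\lambda b^{-1}\big)\ge 1$ — follows from the superadditivity of power means (the $p$ and $-1$ exponents ``combine'' to exponent $p/(p+1)$ via Hölder, exactly the $\tilde p$-type inequality quoted in the proof of Theorem \ref{horoBBL}). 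Hence $\int_I H\ge \int_0^1 1\,du=1=M_{p/(p+1)}(1,1;\lambda)$, which after undoing the normalization is \eqref{dirBBLconc}.

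The main obstacle is purely technical rather than conceptual: making the change-of-variables rigorous when $F$ or $G$ vanish on sets of positive measure, or when $\Psi_F,\Psi_G$ have jumps (so $\varphi$ is merely monotone, not a diffeomorphism). I would handle this by the standard device of first proving the inequality for $F,G$ continuous and strictly positive on a compact interval — where $\Psi_F,\Psi_G$ are genuine $C^1$ diffeomorphisms and every step above is literal — and then passing to the general case by approximating $F,G$ from below by such functions (replacing $F$ by $\max(F,\varepsilon)$ on a large compact subinterval and letting $\varepsilon\downarrow 0$), checking that both sides of \eqref{dirBBLconc} converge appropriately and that the hypotheses \eqref{sdeq}, \eqref{dirBBLhyp} are preserved or only perturbed in a controlled way; the degenerate cases $p=0,\pm\infty$ and the convention $M_p=0$ when $p<0$ and $ab=0$ are then dealt with by the same limiting argument or treated separately by inspection. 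This is exactly parallel to the treatment of the classical one-dimensional Borell--Brascamp--Lieb inequality as in \cite[Section 10]{gardner}.
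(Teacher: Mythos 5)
Your core argument is the same monotone-transport argument as the paper's: define the inverse cumulative distribution maps, use \eqref{sdeq} to get $\Psi_F(u)\le\Psi_G(u)$ so that the directed hypothesis \eqref{dirBBLhyp} may be applied along the coupling, bound $\int_I H$ from below by $\int_0^1 H(\varphi(u))\varphi'(u)\,du$ for the monotone map $\varphi=(1-\lambda)\Psi_F+\lambda\Psi_G$, and finish with a power-mean/H\"older inequality. However, your very first step --- reducing to $\int_I F=\int_I G=1$ --- is not valid for $p\neq 0$, and this is a genuine gap as written. The hypothesis \eqref{dirBBLhyp} is not invariant under \emph{independent} rescalings of $F$ and $G$: $M_p(a/A,b/B;\lambda)$ is not a constant multiple of $M_p(a,b;\lambda)$ unless $A=B$ (your claim that rescaling ``multiplies $M_p(F(t),G(s);\lambda)$ by $M_p(1/\!\int F,1/\!\int G;\lambda)$'' is exactly the multiplicativity that only holds at $p=0$). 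Concretely, the best constant $c$ with $M_p(Ax,By;\lambda)\ge c\,M_p(x,y;\lambda)$ for all $x,y>0$ is $c=\min\{A,B\}$ (for $p>0$, let $x\to\infty$ or $y\to\infty$), so the strongest conclusion the normalized case can return is $\int_I H\ge\min\{\int_I F,\int_I G\}$, which is strictly weaker than \eqref{dirBBLconc} when $\int_I F\neq\int_I G$ --- and in the application (Lemma \ref{lem_924}, with $F=\ell\chi_{\tilde A}$, $G=\ell\chi_{\tilde B}$) the two masses are indeed different. The repair is easy and is in fact what the paper does: skip the normalization, keep the derivatives $\Psi_F'=\int_I F/F(\Psi_F)$ and $\Psi_G'=\int_I G/G(\Psi_G)$ with the masses included, and apply pointwise the H\"older-type inequality $M_p(a_1,b_1;\lambda)\,M_1(a_2,b_2;\lambda)\ge M_{p/(p+1)}(a_1a_2,b_1b_2;\lambda)$ (the same inequality you quote), so the integrand is bounded below by the constant $M_{p/(p+1)}\bigl(\int_I F,\int_I G;\lambda\bigr)$.

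Two smaller points. First, your displayed CDF comparison has the inequality reversed (after normalization \eqref{sdeq} gives $\int_{(-\infty,t]}F\ge\int_{(-\infty,t]}G$), though the consequence you use, $\Psi_F\le\Psi_G$, is the correct one. Second, your plan for the technicalities differs from the paper's and is shakier than you suggest: replacing $F$ by $\max(F,\varepsilon)$ \emph{increases} $F$ and therefore destroys the hypothesis \eqref{dirBBLhyp}, and it is not clear how to approximate so that both \eqref{sdeq} and \eqref{dirBBLhyp} survive without also modifying $H$. The paper avoids approximation altogether: it works directly with the generalized inverses, shows they are strictly increasing hence differentiable a.e.\ with the stated derivatives at Lebesgue points of $F$ and $G$, and justifies $\int_0^1 H(\mathbf{r}(\xi))\mathbf{r}'(\xi)\,d\xi\le\int_I H$ via a primitive of $H$ and the fact that the integral of the derivative of a monotone function is at most its total increment. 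Your substitution inequality is the same statement, but it should be justified along these lines rather than by an appeal to a change-of-variables formula for a non-injective map.
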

\begin{proof}%[Proof of Lemma \ref{dirBBL}]
The proof follows the same lines as the usual transport proofs of the Borell-Brascamp-Lieb inequality in one dimension,
that go back at least to Henstock and Macbeath \cite{HM}. 	Define two functions $\mathbf{t},\mathbf{s}:[0,1] \to I$ by
		\begin{align*}
			\mathbf{t}(\xi) &:= \min\left\{t \in I \, ; \,  \int_{I\cap(-\infty,t]}F \ge \xi\int_IF\right\},\\
			\mathbf{s}(\xi) &:= \min\left\{t \in I \, ; \,  \int_{I\cap(-\infty,t]}G \ge \xi\int_IG\right\}.
		\end{align*}
		From assumption \eqref{sdeq} it follows that $\mathbf{t}(\xi) \le \mathbf {s}(\xi)$ for every $\xi \in [0,1]$.
The integral of $F$ on $I\cap(-\infty,t]$ is a continuous function of $t$, and similarly for $G$. Hence for any $\xi \in (0,1)$,
\begin{equation}  \int_{I \cap (-\infty, \mathbf{t}(\xi)]} F = \xi \int_I F \qquad \text{and} \qquad \int_{I \cap (-\infty, \mathbf{s}(\xi)]} G = \xi \int_I G.
\label{eq_1026_} \end{equation}
Thus the map $\mathbf{t}$ pushes forward the uniform measure on $[0,1]$ to the measure whose density is  $F / \int_I F$, and similarly
for $\mathbf{s}$ and the density $G / \int_I G$.
By the Lebesgue differentiation theorem, for almost any $\xi \in [0,1]$ the point $\mathbf{t}(\xi)$ is a Lebesgue density point of the function $F$,
and the point $\mathbf{s}(\xi)$ is a Lebesgue density point of the function $G$.
These two functions $\mathbf{t}(\xi), \mathbf{s}(\xi)$ are strictly increasing, and therefore differentiable almost
everywhere.  We may therefore differentiate (\ref{eq_1026_}), and obtain that for almost any $\xi \in [0,1]$,
$$ \mathbf{t}'(\xi) = \frac{\int_I F}{F(\mathbf{t}(\xi))} \neq 0 \qquad \text{and} \qquad \mathbf{s}'(\xi) = \frac{\int_I G}{G(\mathbf{s}(\xi))} \neq 0. $$
We now make the change of variables $\mathbf{r}(\xi) =  (1-\lambda)\mathbf{t}(\xi) + \lambda\mathbf{s}(\xi)$.
We claim that
\begin{equation} \int_0^1 H(\rr(\xi))\rr'(\xi)d\xi \le \int_IH.
\label{eq_1201}
\end{equation}
Indeed, let $\tilde{H}$ be a primitive of the integrable function $H$. The function $\xi \mapsto \tilde{H}(\mathbf{r}(\xi))$ is a non-decreasing
function, and hence the integral of its derivative on $[0,1]$ is at most
$$ \lim_{\xi \rightarrow 1^-} \tilde{H}(\mathbf{r}(\xi) - \lim_{\xi \rightarrow 0^+} \tilde{H}(\mathbf{r}(\xi)) = \int_I H. $$
(For the last fact, see e.g. \cite[Corollary 3.7 and the following Exercise 16]{Stein}).
Since the derivative of $\tilde{H}(\rr(\xi))$ equals $H(\rr(\xi))\rr'(\xi)$ almost everywhere in $[0,1]$, inequality (\ref{eq_1201}) is proven.

\medskip
The proof now proceeds by manipulating the integrand on the left-hand side of (\ref{eq_1201})
exactly as in the usual proof of the Borell-Brascamp-Lieb inequality, see Henstock and Macbeath \cite{HM} or Das Gupta \cite[page 300]{DasGupta},
while observing that these manipulations use (\ref{dirBBLhyp}) only for $t, s \in I$ with $t \le s$ and $F(t)G(s) > 0$.
\end{proof}

We can now finish the proof of Lemma \ref{lem_924}. Recall that the density of $m$ with respect to the Lebesgue measure is affine-linear, and denote it by $\ell$. We apply Lemma \ref{dirBBL} with
$$ F = \ell\cdot \chi_{\tilde A}, \qquad G = \ell\cdot \chi_{\tilde B}, \qquad H = \ell \cdot \chi_{[\tilde A : \tilde B]_\lambda} \qquad \text {and } \qquad p = 1$$
and obtain
\begin{equation}
	m([\tilde A : \tilde B]_\lambda) = \int_I H \geq M_{1/2} \left( \int_I F, \int_I G \, ; \lambda \right) = \left( (1-\lambda)\,m(\tilde A)^{1/2} + \lambda \, m(\tilde B)^{1/2} \right)^2\label{eq_2325}
\end{equation}
which is exactly \eqref{mconcavity}.
\end{proof}

\section{Horocyclic needle decomposition}
\label{decompsec}

In this section we prove the horocyclic needle decomposition Theorem \ref{horodecomp}. Needle decompositions on Riemannian manifolds are treated in \cite{K1}, and an extension to Finsler manifolds appears in \cite{Oh}. The purpose of the present section is to describe the construction of the needle decomposition in the two dimensional Finslerian case, since Theorem \ref{horodecomp} can be deduced from this case. As remarked already by Evans and Gangbo \cite{EG1}, the proof is somewhat simpler in dimension two, and we believe that including it here can be of value to some readers, as it offers a rather accessible roadmap to the proof of the needle decomposition results found in \cite{K1} and \cite{Oh}. We shall refer the reader to these sources for some parts of the proof. The main ideas behind the proofs in this section
go back to Evans and Gangbo \cite{EG1}, Feldman and McCann \cite{FM}, the second named author \cite{K1},
Cavalleti and Mondino \cite{CM1} and in particular Ohta \cite{Oh} who considered the non-reversible, Finslerian case.

\medskip A Borel measure on a smooth manifold $M$ is said to be  \emph{absolutely continuous} if it is absolutely continuous in  local coordinates. By the change of variables formula, the property of a Borel measure being {\it absolutely continuous with smooth density} does not depend on the choice of local coordinates. Similarly, we say that $A \subseteq M$ is a set of measure zero if in any local chart it has measure zero.
When we say that almost any point in $M$ satisfies a certain condition, we mean that the set of points that do not satisfy this condition is of  measure zero.

\medskip Suppose that $M$ and $N$ are smooth manifolds, $A \subseteq M$ an arbitrary set and $f: A \to N$. We say that $f$ is \emph{locally Lipschitz} if for every $p \in A$ there exist open neighbourhoods $p \in U \subseteq M$ and $f(p) \in V \subseteq N$, each contained in a coordinate chart, such that $f$ is Lipschitz in these local coordinates.

\subsection{Partition via a guiding function}\label{partsubsec}

\medskip
Let $(M,\Phi)$ be a geodesically-convex Finsler manifold. A function $u: M \rightarrow \RR$ is said to be $L$-Lipschitz if for all $x,y \in M$,
$$ -L \cdot d(y,x) \leq u(y) - u(x) \leq L \cdot d(x,y). $$
(note that the left-hand side inequality follows from the right-hand one).
The minimal $L \geq 0$ for which $u$ is $L$-Lipschitz
is denoted by $\| u \|_{Lip}$.

\medskip
Let $\mu_1,\mu_2$ be two absolutely-continuous, finite Borel measures on $M$ satisfying
	$$\mu_1(M) = \mu_2(M).$$
Consider the Monge-Kantorovich optimization problem
\begin{equation}\label{MK}
W_1(\mu_1, \mu_2) = \sup_{\|u \|_{Lip} \leq 1} \left[ \int_M u d \mu_2 - \int_M u d \mu_1 \right].
\end{equation}
It follows from the Arzela-Ascoli theorem that the supremum in (\ref{MK}) is finite and is actually a maximum. Let us fix a $1$-Lipschitz function $u$ that attains the supremum
in (\ref{MK}), and refer to this function as the {\it guiding function} or the {\it Kantorovich potential}. Define
$$ \Omega_u = \left \{ (x,y) \in M \times M \, ; \, u(y) - u(x) = d(x,y) \right \}. $$
The collection of \emph{strain points} of $u$ is
$$ Strain[u] = \left \{ x \in M \, ; \, \exists w,y \in M \setminus \{ x \}, \ \  (w,x), (x,y) \in \Omega_u \right \}, $$
and the collection of \emph{loose points} is
$$ Loose[u] = \left \{ x \in M \, ; \, \forall y  \in M \setminus \{ x \}, \ \ (x,y), (y,x) \not \in \Omega_u \right \}. $$
Clearly $Strain[u]$ and $Loose[u]$ are disjoint sets. A {\it transport ray} of $u$  is a minimizing, unit-speed geodesic $\gamma: I \rightarrow M$, with $I \subseteq \RR$ connected, open and non-empty, such that for all $s,t \in I$,
		\begin{equation}  u(\gamma(t)) - u(\gamma(s)) = t -s, \label{transportrayeqn} \end{equation}
and such that $\gamma$ is maximal: there is no minimizing, unit-speed geodesic $\tilde{\gamma}: J
 \rightarrow M$, with $J \subseteq \RR$ connected, open and strictly containing $I$, such that $\tilde{\gamma}|_I = \gamma$
and (\ref{transportrayeqn}) holds true for all $s,t \in J$. The restriction of a transport ray $\gamma : I \to M $ to a subinterval of the form $\tilde I = I \cap [t,\infty)$ for some $t \in \RR$, is  called a {\it positive end of a transport ray}.

\begin{proposition}[Properties of the guiding function]\label{uprops}
	\
	\begin{enumerate}
		\item[(i)] The function $u$ is differentiable at any point $x \in Strain[u]$, and its Finsler gradient $\nabla u(x) \in T_x M$ is a unit vector.
		
		\item[(ii)] The set $Strain[u]$ is the disjoint union of all transport rays of $u$. Furthermore,  each transport ray $\gamma: I \rightarrow M$ is an integral curve of $\nabla u$, i.e.
		\begin{equation}  \nabla u(\gamma(t)) = \dot{\gamma}(t) \qquad \qquad \text{ for all } t \in I. \label{eq_1855} \end{equation}
		\item[(iii)] If $(x,y) \in \Omega_u$ for $x \neq y$, then the relative interior of any forward-minimizing geodesic from $x$ to $y$ is contained in a transport ray.
		\item[(iv)] A union of transport rays, which is also a Borel subset of $M$, is called a \emph{transport set}. For any transport set $A \subseteq M$ we have the mass balance condition
		\begin{equation}\label{uMB} \mu_1(A) = \mu_2(A). \end{equation}
		A union of positive ends of transport rays, which is also a Borel subset of $M$, is called a \emph{positive end of a transport set}. If $A^+ \subseteq M$ is a positive end of a transport set then
		\begin{equation}\label{uMB2}\mu_1(A^+) \le \mu_2(A^+).\end{equation}
	\end{enumerate}
\end{proposition}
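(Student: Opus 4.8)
The plan is to follow the by-now-standard analysis of Kantorovich potentials (Evans--Gangbo \cite{EG1}, Feldman--McCann \cite{FM}, and in the asymmetric Finsler setting Ohta \cite{Oh}; cf.\ \cite{K1,CM1}), adapted to $(M,\Phi)$. The key elementary fact to record first is that $\Omega_u$ concatenates: if $(w,x),(x,y)\in\Omega_u$, adding the defining equalities gives $u(y)-u(w)=d(w,x)+d(x,y)$, which together with the $1$-Lipschitz bound $u(y)-u(w)\le d(w,y)$ and the triangle inequality $d(w,y)\le d(w,x)+d(x,y)$ forces $d(w,y)=d(w,x)+d(x,y)$; hence gluing a forward-minimizing geodesic from $w$ to $x$ to one from $x$ to $y$ produces a minimizing unit-speed geodesic along which $u$ grows at unit speed. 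For (i), given $x\in Strain[u]$ this yields a minimizing unit-speed geodesic $\gamma$ through $x$ with $u\circ\gamma$ affine of slope $1$. The $1$-Lipschitz inequality sandwiches $u$ between the functions $z\mapsto u(x)-d(z,x)$ and $z\mapsto u(x)+d(x,z)$, which are smooth away from $x$, the lower one being tight along $\gamma$ on the $w$-side and the upper one on the $y$-side; a convexity argument using strict convexity of the indicatrix $\{\Phi=1\}\subseteq T_xM$ then shows $u$ is differentiable at $x$, with $du(x)$ the unique co-vector of dual norm $1$ satisfying $du(x)(\dot\gamma)=1$, i.e.\ $\nabla u(x)=\dot\gamma(x)$ is a unit vector. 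I would cite \cite{EG1,FM,K1,Oh} for the details of this differentiability step rather than reproduce them.

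Next I would prove (ii) and (iii). Every transport ray $\gamma:I\to M$ lies in $Strain[u]$: for $t\in I$ choose $s_1<t<s_2$ in $I$; then $(\gamma(s_1),\gamma(t))$ and $(\gamma(t),\gamma(s_2))$ lie in $\Omega_u$ because $\gamma$ is minimizing and $u\circ\gamma$ has slope $1$, so $\gamma(t)\in Strain[u]$. Conversely, by the concatenation fact each $x\in Strain[u]$ sits in the relative interior of a minimizing unit-speed geodesic with $u$ of slope $1$, which extends maximally to a transport ray through $x$; thus the transport rays cover $Strain[u]$. They are disjoint: two transport rays meeting at a point $x$ would, by (i), have the same velocity $\nabla u(x)$ there, hence coincide as solutions of the Euler--Lagrange equation of $\Phi^2/2$ and, by maximality, as transport rays. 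The same observation — $u\circ\gamma$ of slope $1$ and $\Phi(\dot\gamma)=1$ force $du(\gamma(t))(\dot\gamma(t))=1$, so by strict convexity $\dot\gamma(t)$ is the unique unit vector realizing this, namely $\nabla u(\gamma(t))$ — gives \eqref{eq_1855}. For (iii), with $(x,y)\in\Omega_u$, $x\ne y$, and $\eta:[0,L]\to M$ a forward-minimizing unit-speed geodesic from $x$ to $y$: the inequalities $u(\eta(t))-u(\eta(s))\le d(\eta(s),\eta(t))=t-s$ for $0\le s\le t\le L$ sum to the equality $u(\eta(L))-u(\eta(0))=L$, so each is an equality, $u\circ\eta$ has slope $1$, and $\eta|_{(0,L)}$ extends to a transport ray containing the relative interior of $\eta$.

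Part (iv) is where I expect the real work. By Kantorovich duality, pick an optimal plan $\pi$ on $M\times M$ with marginals $\mu_1,\mu_2$, concentrated on $\Omega_u$. For $(x,y)\in\supp\pi$ with $x\ne y$, let $\eta:[0,L]\to M$ be a minimizing geodesic from $x$ to $y$; by (iii) its relative interior lies in $Strain[u]$, and by (i) $\dot\eta$ equals $\nabla u$ along it, so — distinct rays being disjoint — all of $\eta|_{(0,L)}$ lies in one transport ray $\cR$, whence $x,y\in\overline{\cR}$ with $u(x)<u(y)$. Parameterizing $\cR$ by $u$-value, $x$ is interior to $\cR$ or its lower endpoint, and $y$ interior or its upper endpoint. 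The one technical input I would import (from \cite{FM,K1,Oh}, and in dimension two already from \cite{EG1}) is that the set $E$ of endpoints of transport rays is $\mu_1$- and $\mu_2$-negligible; this uses the absolute continuity of $\mu_1,\mu_2$ and the local Lipschitz regularity of the "ray map" straightening the rays. Granting it: if $A$ is a transport set, i.e.\ a Borel union of (open) transport rays, then any $(x,y)\in\supp\pi$ with $x\in A$, $x\ne y$ has $x$ interior to some ray $\cR\subseteq A$ (as the rays partition $Strain[u]$), so $y\in\overline{\cR}$ lies in $A$ unless $y\in E$; hence $\pi(A\times(M\setminus A))\le\mu_2(E)=0$ and, symmetrically, $\pi((M\setminus A)\times A)\le\mu_1(E)=0$. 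Comparing $\mu_1(A)=\pi(A\times A)+\pi(A\times(M\setminus A))$ with $\mu_2(A)=\pi(A\times A)+\pi((M\setminus A)\times A)$ yields \eqref{uMB}. For \eqref{uMB2}, if $A^+$ is a Borel union of positive ends of transport rays and $(x,y)\in\supp\pi$ with $x\in A^+$, $x\ne y$, then $u(y)>u(x)$ places $y$ further downstream on the same ray, so $y\in A^+$ unless $y\in E$; thus $\pi(A^+\times(M\setminus A^+))=0$ and $\mu_1(A^+)=\pi(A^+\times A^+)\le\pi(M\times A^+)=\mu_2(A^+)$.

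The main obstacle is exactly this last ingredient: the $\mu_i$-negligibility of the endpoint set $E$, which rests on the measurability and local Lipschitz regularity of the transport-ray decomposition (the ray map). I would not prove it from scratch here, but deduce it from the Finsler needle-decomposition construction carried out in the next section (and in \cite{K1,Oh}), where this estimate appears as a standard by-product; parts (i)--(iii), by contrast, are essentially bookkeeping with the triangle inequality together with strict convexity of $\Phi$.
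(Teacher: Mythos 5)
Your proposal is correct and follows the same standard route that the paper itself defers to: the paper gives no in-text proof of Proposition \ref{uprops}, citing \cite[Sections 4.1 and 5]{Oh} for (i)--(iii) and the first half of (iv) and \cite[Lemma 27]{CFM} for (iv), and your concatenation/differentiability argument together with the optimal-plan-plus-negligible-ray-ends argument is exactly that standard proof. Your one imported ingredient, the negligibility of the set of ray endpoints, can indeed be taken from Corollary \ref{straightcor}(d) (or from \cite{K1,Oh}) without circularity, since that corollary rests only on parts (i)--(iii) and the Lipschitz regularity lemmas, not on part (iv).
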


Proposition \ref{uprops} is well-known. We refer the reader to \cite[Section 4.1]{Oh} for the short proofs of (i)-(iii). The proof of the first part of (iv) appears in \cite[Section 5]{Oh}, while the second part is a direct consequence of this proof. In the Euclidean case a complete proof of (iv), which can be easily adapted to the current setting, can be found in \cite[Lemma 27]{CFM}.

\medskip
For the rest of this section we assume
$$\dim M = 2.$$
Our next step is to prove regularity properties of $\nabla u$ that will be used later
in the construction of the  disintegration of measure.
By Proposition \ref{uprops}(iii), every point $x \in Strain[u]$ is contained in a unique transport ray
\begin{equation}\label{rayeq} \gamma: (-\alpha(x),\beta(x)) \rightarrow M \end{equation} satisfying $$ \dot\gamma(0) = \nabla u (x). $$  By the definition of $Strain[u]$, both $\alpha$ and $\beta$ are well-defined, positive functions on $Strain[u]$, possibly attaining the value $+\infty$. These two functions are Borel mesurable, essentially by the same arument as in \cite[Lemma 2.1.12]{K1}. Let
$$Strain_\eps[u] : = \{ x \in Strain[u] \, ; \, \alpha(x),\beta(x) \ge \eps \}.$$
Thus
$$Strain[u] = \bigcup_{\eps>0}Strain_\eps[u].$$

\begin{lemma}\label{locliplemma}
	For any $\eps > 0$, the gradient $\nabla u$ is locally Lipschitz in $Strain_{\eps}[u]$.
\end{lemma}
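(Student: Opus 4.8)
The plan is to show that for any $\eps > 0$, each point $x_0 \in Strain_\eps[u]$ has a neighborhood on which $\nabla u$ is Lipschitz, by comparing transport rays through nearby points and exploiting the fact that two distinct minimizing geodesics of a Finsler metric cannot intersect except at an endpoint. Since $\dim M = 2$, it suffices to work in a fixed strongly convex coordinate chart $U$ around $x_0$; by shrinking $U$ we may assume all points of interest lie well inside the chart and that all transport-ray segments we consider stay inside $U$. First I would recall from Proposition \ref{uprops} that for $x \in Strain_\eps[u]$ the transport ray through $x$ extends unit-speed distance at least $\eps$ in both directions, so both the ``backward'' point $\gamma_x(-\eps/2)$ and the ``forward'' point $\gamma_x(+\eps/2)$ are well-defined and, crucially, lie in $Strain[u]$ as well (interior points of transport rays are strain points). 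The velocity $\nabla u(x) = \dot\gamma_x(0)$ is then determined by $x$ together with these two companion points via the exponential map / minimizing-geodesic data of the Finsler metric, which depends smoothly on its endpoints in a strongly convex chart.

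The key step is a quantitative ``no-crossing'' estimate: if $x, y \in Strain_{\eps}[u]$ are close, then their transport rays have nearly parallel directions. Concretely, take the segments $\sigma_x$ from $\gamma_x(-\eps/2)$ to $\gamma_x(\eps/2)$ and $\sigma_y$ from $\gamma_y(-\eps/2)$ to $\gamma_y(\eps/2)$; these are minimizing geodesics of length $\eps$ along which $u$ increases at unit rate. Because a minimizing geodesic cannot meet another minimizing geodesic at more than one point (unless they overlap), and because $u$ is $1$-Lipschitz with $u(\gamma_x(t)) - u(\gamma_x(s)) = t-s$, one gets that the endpoints of $\sigma_y$ are ``ordered'' relative to $\sigma_x$ in a way forced by the values of $u$. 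Quantitatively, one shows $d(\gamma_x(\pm\eps/2), \gamma_y(\pm\eps/2)) \le C\, d(x,y)$ for a constant $C = C(\eps, x_0)$ depending only on the local geometry — this is where the $1$-Lipschitz potential and the transport-ray identity do the work, bounding the displacement of the companion points by the displacement of $x$ plus the variation of $u$. This is the classical argument from \cite[Lemma 2.1.12 or Section 2.1]{K1}, adapted by Ohta \cite{Oh} to the Finsler setting; I would cite those sources for the details of the estimate and merely indicate how it specializes in two dimensions, where the argument is cleanest.

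Once the companion points $\gamma_x(\pm\eps/2)$ depend in a locally Lipschitz way on $x \in Strain_\eps[u]$, the conclusion follows: in a strongly convex chart the initial velocity of the unique minimizing geodesic joining two points is a smooth (hence locally Lipschitz) function of the two endpoints, so
$$\nabla u(x) = \dot\gamma_x(0) = v\bigl(\gamma_x(-\eps/2),\, \gamma_x(\eps/2)\bigr),$$
where $v(p,q) \in T_p M$ denotes the (appropriately rescaled) initial velocity of the minimizing $\Phi$-geodesic from $p$ to $q$, is a composition of locally Lipschitz maps and therefore locally Lipschitz on $Strain_\eps[u]$. The main obstacle is the quantitative no-crossing estimate of the second paragraph: one must rule out that two transport rays through nearby points diverge at different rates or come close to being tangent, which requires using strong convexity of $\Phi$ (uniform on the relevant compact set of directions) together with the monotonicity of $u$ along rays; I would lean on the treatments in \cite{K1} and \cite{Oh} here rather than reproduce the estimate, noting only that the two-dimensional case avoids the measure-theoretic subtleties of partitioning the strain set that complicate higher dimensions.
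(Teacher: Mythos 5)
Your overall reduction is structurally the same as the paper's (compare nearby transport rays, use the fact that distinct minimizing geodesics of $\Phi$ cannot cross, recover $\nabla u$ from points along the ray via smooth dependence of minimizing geodesics on their endpoints in a strongly convex chart), but the heart of the lemma is missing. Your ``key step'' --- the estimate $d(\gamma_x(\pm\eps/2),\gamma_y(\pm\eps/2))\le C\,d(x,y)$ --- is essentially equivalent to the Lipschitz bound on $\nabla u$ that you are trying to prove, since the companion points are obtained from $(x,\nabla u(x))$ by the geodesic flow and vice versa; so stating it as the key step and then not proving it leaves the lemma unproved. The justification you sketch for it does not work: the $1$-Lipschitz property of $u$ together with $u(\gamma(t))-u(\gamma(s))=t-s$ only gives two-sided bounds such as $\eps/2-d(x,y)\le d(y,\gamma_x(\eps/2))\le \eps/2+d(x,y)$, i.e.\ it confines $\gamma_y(\eps/2)$ to a level-set window of width $O(d(x,y))$ around the forward sphere of radius $\eps/2$, not to an $O(d(x,y))$-neighbourhood of the single point $\gamma_x(\eps/2)$; a naive use of these inequalities yields at best a H\"older-$1/2$ bound on the angle between the rays. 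Obtaining the genuine Lipschitz rate is exactly the difficulty, and in the paper it is the content of Lemma \ref{tanglemma}, proved in the Appendix via geodesic parallel coordinates (Lemma \ref{parcor}) and the ODE estimate of Lemma \ref{lem_1006}, using the disjointness of the two geodesics in dimension two. Deferring this to \cite{K1} and \cite{Oh} outsources the core of the argument; moreover \cite[Lemma 2.1.12]{K1} (the result you point to) concerns Borel measurability of the functions $\alpha,\beta$, not Lipschitz regularity, \cite{K1} is Riemannian, and the reason the present paper carries an Appendix is precisely to supply a two-dimensional argument valid for the non-reversible Finsler metric at hand rather than quote such a statement.

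A second omission: the no-crossing geometry only controls the angle between two disjoint rays once one already knows their tangents are crudely close --- two disjoint geodesic segments through nearby points can be nearly anti-parallel without intersecting. The paper therefore first proves that $\nabla u$ is \emph{continuous} on $Strain_{\eps}[u]$ (a compactness/limit argument using the $1$-Lipschitzness of $u$ and the transport-ray identity), which is what allows it to verify the hypothesis $|\dot\eta_1(0)-\dot\eta_2(0)|<c$ of Lemma \ref{tanglemma}; your plan has no substitute for this step. The final reduction you describe is fine modulo small repairs: the velocity $v(\gamma_x(-\eps/2),\gamma_x(\eps/2))$ is a vector at $\gamma_x(-\eps/2)$, not at $x$, so one should instead take the velocity of the connecting geodesic at the intermediate parameter; and one should first replace $\eps$ by a smaller constant so that the companion points remain in the strongly convex chart (harmless, since $Strain_{\eps}[u]\subseteq Strain_{\eps'}[u]$ for $\eps'<\eps$). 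You should also treat separately the case of two points on the \emph{same} transport ray, where disjointness fails but smoothness of the single geodesic gives the bound, as the paper does. As it stands, however, the proposal contains no proof of the one quantitative estimate that carries the lemma.
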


The proof of Lemma \ref{locliplemma} requires the following two-dimensional lemma, whose proof is deferred to the Appendix below.

%\begin{figure}[t]
%	\caption{Lemma \ref{tanglemma}}
%	\centering
%	\includegraphics[width = .5\textwidth]{Lemma 4.3.jpg}
%\end{figure}

\begin{lemma}[``Disjoint geodesics that are close to each other at one point have similar tangents'']\label{tanglemma} Let $p \in M$. Then there exist $C,c > 0, \sigma_0 \in (0,1)$ and a smooth coordinate chart $\psi: V \to U$, where $V$ is an open subset of $\RR^2$ and $U$ is a neighbourhood of $p$, such that the following holds.
	
	\medskip  Suppose that
	$0 < \sigma \leq \sigma_0$ and $\gamma_1, \gamma_2: (-\sigma, \sigma) \rightarrow U$ are two disjoint, unit-speed geodesics. Set
$$
\eta_i = \psi^{-1} \circ \gamma_i, $$
	and assume that
	$|\dot{\eta}_1(0) - \dot{\eta}_2(0)| < c$. Then,
	$$  |\dot{\eta}_1(0) - \dot{\eta}_2(0)| \leq \frac{C}{\sigma} \cdot |\eta_1(0) - \eta_2(0)|. $$
\end{lemma}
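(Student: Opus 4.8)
The statement is a quantitative, two-dimensional rigidity fact: two disjoint unit-speed geodesics that are close at one point (in a fixed chart) must have comparably close tangent directions there, with the comparison constant blowing up like $1/\sigma$ as the common domain shrinks. The natural approach is to reduce to a statement about the geodesic flow of the Finsler metric $\Phi$, which is the flow of a smooth second-order ODE on (a neighborhood in) the unit tangent bundle, and to exploit the fact that in dimension two the exponential map has a clean local structure: through each point, in each direction, there is exactly one geodesic, and nearby geodesics with very different tangents at time $0$ must separate in position at a definite linear rate.

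First I would fix $p$ and choose normal-type coordinates $\psi : V \to U$ around $p$ for the Finsler metric (any smooth chart works, since the conclusion is chart-dependent with constants allowed to depend on $\psi$), and pass to the geodesic spray: a unit-speed geodesic is determined by its $1$-jet $(\eta(0), \dot\eta(0))$ via the smooth flow $\Psi_t$ on an open subset of $\{(x,v) : \Phi(x,v)=1\}$. By smoothness and compactness of a small closed neighborhood of $(p, \cdot)$ in the unit bundle, $\Psi_t$ and its derivatives are uniformly bounded and uniformly Lipschitz for $|t| \le \sigma_0$, once $\sigma_0$ is small. Now suppose $\eta_1,\eta_2 : (-\sigma,\sigma)\to V$ are the coordinate representatives of the two disjoint unit-speed geodesics, with $a_i = \eta_i(0)$, $w_i = \dot\eta_i(0)$, and $|w_1 - w_2| < c$. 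Consider the function $t \mapsto \eta_1(t) - \eta_2(t)$ on $(-\sigma,\sigma)$. If $|w_1 - w_2|$ were \emph{not} controlled by $(C/\sigma)|a_1 - a_2|$, then $|a_1-a_2|$ would be very small relative to $\sigma|w_1-w_2|$; I want to show this forces $\eta_1$ and $\eta_2$ to cross, contradicting disjointness.

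The mechanism is a first-order Taylor expansion in $t$ with a second-order remainder: writing $\eta_i(t) = a_i + t w_i + R_i(t)$ with $|R_i(t)| \le K t^2$ where $K$ bounds the geodesic acceleration (the Christoffel-type terms of $\Phi$) uniformly on the relevant compact set, one gets
\begin{equation*}
|\eta_1(t) - \eta_2(t) - (a_1 - a_2) - t(w_1 - w_2)| \le 2K t^2.
\end{equation*}
Decompose $w_1 - w_2$ into its component along and orthogonal to some fixed direction, or more simply: pick the coordinate, say the first, in which $|(w_1-w_2)_j|$ is at least $\tfrac{1}{\sqrt2}|w_1-w_2| =: \delta$. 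Along that coordinate, $(\eta_1-\eta_2)_j(t)$ is, up to the $2Kt^2$ error, the affine function $(a_1-a_2)_j + t\,(w_1-w_2)_j$, which changes sign within $|t|\le \sigma$ provided $|(a_1-a_2)_j| < \sigma\delta$ and $2K\sigma^2 < \tfrac12 \sigma \delta$, i.e. $\sigma < \delta/(4K)$. So: if $|a_1-a_2| < \sigma\delta / 2$ (say) and $\sigma \le \sigma_0$ is also $\le c/(8K)$ (which forces $\sigma < \delta/(4K)$ once $\delta \le c$ — and here one uses $|w_1-w_2|<c$ to know $\delta$ is small but one needs a \emph{lower} bound, so instead one argues by the intermediate value theorem directly on the sign change, treating $\delta$ as given), then $(\eta_1-\eta_2)_j$ vanishes at some $|t|\le\sigma$, hence $\eta_1(t)=\eta_2(t)$ — contradiction. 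Unwinding: disjointness forces $|a_1 - a_2| \ge \tfrac12 \sigma \cdot \tfrac{1}{\sqrt2}|w_1-w_2|$, which is the claimed inequality with $C = 2\sqrt2$ and with $c$ and $\sigma_0$ chosen small enough (depending on $K$, hence on $p$ and $\psi$) for the remainder estimate and the sign-change argument to be valid.

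\textbf{Main obstacle.} The one genuinely delicate point is that the crossing argument as sketched uses a \emph{lower} bound on one component of $w_1 - w_2$ to beat the quadratic remainder $2Kt^2$ on the scale $|t|\le\sigma$; this works cleanly because $2Kt^2 \le 2K\sigma |t|$ and we can demand $\sigma$ small enough that $2K\sigma$ is less than, say, a quarter of $\delta/|t|$-type quantities — but one must be careful that $\sigma_0$, $c$ and $C$ are chosen in the right order and depend only on $p$ (through the uniform bounds on the spray over a fixed compact neighborhood of the unit fiber over $p$), not on the particular geodesics. Handling the case where $w_1-w_2$ is nearly orthogonal to whichever coordinate axis one picked is dealt with by simply choosing the axis adaptively as above, so it is not a real obstruction. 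I would relegate the uniform-bound bookkeeping and the explicit constant-chasing to the Appendix, exactly as the paper does.
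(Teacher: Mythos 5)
There is a genuine gap, and it sits exactly at the point you flagged as the ``main obstacle.'' Your crossing argument rests on the Taylor bound $|\eta_i(t)-a_i-tw_i|\le Kt^2$ with a \emph{uniform} constant $K$, and it beats this error only when the linear separation dominates it at the endpoints, i.e.\ when $\sigma\delta \gtrsim K\sigma^2$, that is $|w_1-w_2|\gtrsim K\sigma$. But the constants $\sigma_0, c, C$ must be fixed before the geodesics are given, and the relevant regime of the lemma is precisely $|w_1-w_2|$ arbitrarily small compared to $\sigma$ (indeed the hypothesis is $|\dot\eta_1(0)-\dot\eta_2(0)|<c$, and in the application the tangents are nearly equal). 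In that regime the uniform quadratic remainder swamps the linear term and no crossing is forced: two curves satisfying only $|\ddot\eta_i|\le 2K$, such as $\eta_1(t)=(t,0)$ and $\eta_2(t)=(t,\epsilon+\delta t+Kt^2)$ with $\epsilon$ slightly larger than $\delta^2/(4K)$, are disjoint on all of $(-\sigma,\sigma)$ while $|\eta_1(0)-\eta_2(0)|\approx\delta^2/K\ll\sigma\delta$. So disjointness plus a uniform second-derivative bound does \emph{not} imply the conclusion; the statement is false at that level of generality, and your proof never uses more than that.

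What rescues the lemma is that both curves are geodesics of the same smooth spray, so the transverse displacement obeys a \emph{linear} differential inequality rather than a crude $O(K)$ acceleration bound. This is how the paper argues: it first builds geodesic parallel (Fermi-type) coordinates adapted to $\gamma_1$ (Lemma \ref{parcor}), so that $\eta_1(t)=(t,0)$ and $\eta_2(t)=(x(t),y(t))$; the geodesic equations together with Lipschitz continuity of the Christoffel symbols and the unit-speed normalization give $|y''|\lesssim|y|+|y'|$, i.e.\ the error is small \emph{because the curves are close}, not merely because $t$ is small. Disjointness is then used only to conclude that $y$ keeps a constant sign on $(-\sigma/2,\sigma/2)$ (otherwise $\eta_2$ would cross the axis, i.e.\ $\gamma_1$), and the ODE Lemma \ref{lem_1006} (a Gronwall-type statement for positive solutions of $|y''|\le A(|y|+|y'|)$) converts constant sign into the sharp bound $|y'(0)|\lesssim y(0)/\sigma$, which is the desired $C/\sigma$ estimate. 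To repair your proposal you would need to replace the uniform remainder $Kt^2$ by such a Jacobi-field/Gronwall comparison between the two geodesics; the adaptive choice of coordinate axis and the constant-chasing are not the issue.
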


\begin{proof}[Proof of Lemma \ref{locliplemma}]
	First, we argue that $\nabla u$ is continuous on $Strain_\eps[u]$. Let $p_n \in Strain_\eps[u]$ converge to $p \in Strain_\eps[u]$. Then there exist unit-speed, minimizing geodesics $\gamma_n,\gamma:(-\eps,\eps) \to M$ with
$$\gamma_n(0) = p_n, \quad \gamma(0) = p, \quad \dot \gamma_n (0) = \nabla u (p_n) \quad \text{ and } \quad \dot\gamma(0) = \nabla u(p),$$
all satisfying \eqref{transportrayeqn} for all $t,s\in(-\eps,\eps)$. Assume by contradiction that $\dot\gamma_n(0)$ has a subsequence $\dot\gamma_{n_k}(0)$ converging to some $v \ne \nabla u(p)$. By the continuity of $u$ and of the Finslerian distance function $d$, the pointwise limit of $\gamma_{n_k}$ is a unit-speed, minimizing geodesic $\gamma_0$ satisfying $\gamma_0(0) = p$, $\dot \gamma_0(0) = v$ and $$ u(\gamma_0(t)) - u(\gamma_0(s)) = t - s \qquad \qquad \text{for all} \ t,s \in (-\eps,\eps).$$ Since $u$ is 1-Lipschitz, it follows that $\nabla u(p) = v$, in contradiction. This completes the proof that $\nabla u$ is continuous. Next, let $p \in Strain_\eps[u]$. Let the coordinate chart $\psi : V \to U$ and the constants $C,c,\sigma_0 > 0$ be as in Lemma \ref{tanglemma}. For ease of reading and with abuse of notation, in this proof we identify $U \subseteq M$ with $V\subseteq\RR^2$, and we identify $TU$ with $TV \cong V \times \RR^2$.
Since $\nabla u$ is continuous on $Strain_{\eps}[u]$, we may shrink the coordinate chart and assume that
	\begin{equation}\label{graddifsmall}|\nabla u(q_1)- \nabla u(q_2)| < c \qquad \text{ for all } q_1, q_2 \in U \cap Strain_{\eps}[u].\end{equation}
	Choose a neighbourhood  $U' \subset U$ of $p$ and a number $\sigma_1 > 0$ such that any geodesic $\gamma:(-\sigma_1,\sigma_1) \to M$ with $\gamma(0) \in U'$ is contained in $U$.
Set $$ \sigma = \min\{\sigma_0, \sigma_1, \eps\}. $$
For any
	$q \in U'\cap Strain_\eps[u]$, there exists a unit-speed geodesic $\gamma_q:(-\sigma,\sigma) \to U$, contained in the transport ray through $q$, with $\gamma_q(0) = q$ and
$$ \dot\gamma_q(0) = \nabla u(q). $$ Let $q_1, q_2 \in U' \cap Strain_{\eps}[u]$. If $q_1,q_2$ belong to different transport rays, then $\gamma_{q_i}$ are disjoint by Proposition \ref{uprops}(iii), hence by \eqref{graddifsmall} and Lemma \ref{tanglemma},
	$$|\nabla u(q_1) - \nabla u (q_2)| \le \frac C \sigma |q_1 - q_2|.$$
	If $q_1,q_2$ lie on the same transport ray then $\nabla u(q_1)$ and $\nabla u(q_2)$ are unit vectors tangent to the same geodesic,
	and by the smoothness of geodesics, they differ by at most $C'|q_1 - q_2|$, where $C'$ is independent of $q_1,q_2$. It follows thus that $\nabla u$ is locally Lipschitz on $U' \cap Strain_{\eps}[u]$.
\end{proof}

The set $Strain[u]$, which is a disjoint union of transport rays, can be divided into countably many transport sets, each admitting a convenient parametrization.

\begin{lemma}\label{decomplemma}
	We may decompose $Strain[u]$ into a countable disjoint union of transport sets $A_1,A_2,\ldots$ such that for any $k \geq 1$,
	\begin{enumerate}
% \addtocounter{equation}{1}
% \newcounter{eq_936}
% \addtocounter{eq_936}{\value{equation}}
		\item[($\star$)]  There exists a Borel subset $Y_k  \subseteq \RR$ and a locally Lipschitz one-to-one map $\vphi_k: Y_k \rightarrow A_k$
		such that for each transport ray $\gamma \subseteq A_k$, the set $\vphi_k(Y_k) \cap \gamma$ is a singleton. Moreover,
		$\nabla u$ is locally Lipschitz in $\vphi_k(Y_k)$.
	\end{enumerate}
\end{lemma}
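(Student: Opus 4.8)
The plan is to build the decomposition locally out of ``flow boxes'' for the locally Lipschitz vector field $\nabla u$, pass to a countable subcover using second countability of $M$, and then disjointify while keeping track of Borel measurability. Write $Strain[u] = \bigcup_{n \geq 1} Strain_{1/n}[u]$ and fix $n$; set $S = Strain_{1/n}[u]$ and $S' = Strain_{1/(2n)}[u] \supseteq S$, on both of which $\nabla u$ is locally Lipschitz by Lemma \ref{locliplemma}. For each $p \in S$ I would choose a coordinate chart $U \ni p$ with compact closure on which $\nabla u$ is $L$-Lipschitz, and a short smooth arc $T \subseteq U$ through $p$ transverse to $\nabla u(p)$. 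After shrinking $U$ so that $\nabla u$ stays within a narrow cone around $\nabla u(p)$ and $U$ is thin in that direction, and fixing $\delta < 1/(4n)$, Picard--Lindel\"of theory for the Lipschitz field $\nabla u$ together with Proposition \ref{uprops}(ii) shows that every transport ray meeting $S \cap U$, being an integral curve of $\nabla u$, remains in $U$ over a time interval of length at least $\delta$ about any of its points in $S \cap U$, and hence crosses $T$ exactly once, at a point lying in $S'$. Running the flow forward and backward recovers the entire ray from this crossing point, and parametrizing $T$ by arclength identifies $T$ with an interval in $\RR$; the dependence of the ray on its crossing point is locally Lipschitz, this being where Lemma \ref{tanglemma} (through Lemma \ref{locliplemma}) enters.

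Next I would invoke second countability of $M$ to cover $S$ by countably many such flow boxes, let $(n,j)$ range over all pairs enumerated as $k = 1,2,\ldots$ with associated transversal arcs $T_k \subseteq \RR$, let $\mathcal{R}_k$ be the set of transport rays having a point in the $k$-th flow box intersected with $Strain_{1/n_k}[u]$, and set $A_k = \bigl(\bigcup \mathcal{R}_k\bigr) \setminus (A_1 \cup \cdots \cup A_{k-1})$. By construction each $A_k$ is a union of full transport rays, the $A_k$ are pairwise disjoint, and $\bigcup_k A_k = Strain[u]$; the only nontrivial point is that each $A_k$ is Borel, which follows from the Borel measurability of the assignment ``point $\mapsto$ transport ray through it'', established essentially as in \cite[Lemma 2.1.12]{K1}. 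Finally, for each $k$ I would take $\vphi_k$ to be the inclusion into $M$ of the Borel set $Y_k \subseteq T_k \subseteq \RR$ consisting of those crossing points that belong to rays of $A_k$, so that $Y_k = \vphi_k^{-1}(A_k)$ is Borel; then $\vphi_k$ is one-to-one, locally Lipschitz as a parametrization of the smooth arc $T_k$, its image meets each ray of $A_k$ in exactly one point, and $\nabla u$ is locally Lipschitz on $\vphi_k(Y_k) \subseteq Strain_{1/(2n_k)}[u]$ by Lemma \ref{locliplemma}.

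The main obstacle I anticipate is measure-theoretic bookkeeping rather than geometry: verifying that the ``transport ray through a point'' map is Borel, so that the families $\mathcal{R}_k$, the transport sets $A_k$, and the parameter sets $Y_k$ are all Borel, and that the crossing-point parametrization depends measurably --- indeed, locally Lipschitz-ly --- on the ray. The geometric content, namely that transport rays are integral curves of the locally Lipschitz field $\nabla u$ and therefore foliate each flow box, is already packaged in Proposition \ref{uprops} and Lemmas \ref{locliplemma} and \ref{tanglemma}; the work lies in threading these facts through a careful countable exhaustion and disjointification, following \cite{K1} and \cite{Oh} closely.
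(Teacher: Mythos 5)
Your overall strategy is the same as the paper's: cover $Strain[u]$ by countably many local transversal constructions, let each transport set be the union of the rays meeting the transversal, disjointify ray-by-ray, and take $\vphi_k$ to be the (arclength) parametrization of the transversal restricted to a Borel set, with the ``moreover'' clause coming from Lemma \ref{locliplemma}. However, there is one genuine gap: the claim that every transport ray meeting $S\cap U$ ``crosses $T$ exactly once''. Your thin-box/narrow-cone argument only controls the ray on a parameter window of length about $\delta$ around each of its points in $S\cap U$; it does not exclude that the same (possibly very long) transport ray later leaves $U$, re-enters it, and crosses the arc $T$ a second time --- and at such a second crossing the velocity is again $\nabla u$ at a strain point, hence again inside the narrow cone, so nothing local is violated. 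If that happens, the correspondence between rays and crossing points is not injective and $\vphi_k(Y_k)\cap\gamma$ need not be a singleton, which is exactly the property the lemma asserts. The paper closes this hole by choosing the transversal $\eta$ to be a \emph{minimizing geodesic} through $p$ transverse to the ray: since a transport ray is itself a minimizing geodesic, the standing fact that two minimizing geodesics meet at most once (unless they overlap) gives the ``at most once'' globally, and continuity of $\nabla u$ on $Strain_\eps[u]$ (in dimension two) gives ``at least once'' for all strain points near $p$. Your argument can be repaired either by making the same choice of transversal, or by placing $U$ inside a geodesically convex neighbourhood so that a minimizing geodesic cannot exit and re-enter; as written, the exactly-once claim is unsupported.

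A secondary, more cosmetic point: Picard--Lindel\"of does not literally apply to $\nabla u$, which is defined only on $Strain[u]$ (not on an open set) and is locally Lipschitz only on the sets $Strain_\eps[u]$. What you actually need --- that transport rays are integral curves of $\nabla u$ and that the ray through a given strain point is unique --- is already supplied by Proposition \ref{uprops}(ii)--(iii), so you should cite that rather than ODE theory; this does not affect the structure of your argument.
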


\begin{proof}
	Let $p \in Strain[u]$. Then $p \in Strain_\eps[u]$ for some $\eps > 0$. Let  $v \in T_pM$, $v \ne \nabla u(p)$ and let $\eta: (-s_0, s_0) \rightarrow M $ be a minimizing geodesic with $\dot{\eta}(0) = v$, for some $s_0 > 0$. Thus $\eta$  is transverse to the transport ray through $p$.
	Since a transport ray is a minimizing geodesic and since $\eta$ is a minimizing geodesic defined on an open interval, no transport ray can intersect $\eta$ more than once.

	\medskip
	Let $A_{p,\eps}$ denote the union of all transport rays intersecting $\eta \cap Strain_\eps[u]$. Then $A_{p,\eps}$ is Borel (even sigma-compact, as in \cite[Lemma 2.1.12]{K1}) hence is a transport set. We claim that there exists an open disc $D$ containing $p$ such that $D\cap Strain_\eps[u] \subseteq A_{p,\eps}$. Indeed, otherwise there exists a sequence $p_k \in Strain_\eps[u]$ converging to $p$ such that the transport rays through $p_k$ do not intersect $\eta$,
	but the transport ray through $p$ is transverse to $\eta$, which is a contradiction to the continuity of $\nabla u$ on $Strain_\eps[u]$ (since we are in dimension two).

	\medskip
	Let $Y_{p,\eps} = \eta^{-1}(Strain_\eps[u])$, and $\varphi_{p,\eps} = \eta\vert_{Y_{p,\eps}}$. Then $Y_{p,\eps}$ is Borel and $\varphi_{p,\eps}$ is locally Lipschitz and one-to-one, and $\nabla u$ is locally Lipschitz on $\varphi_{p,\eps}(Y_{p,\eps})$ by Lemma \ref{locliplemma}. By construction, $\varphi_{p,\eps}(Y_{p,\eps})\cap\gamma$ is a singleton for every transport ray $\gamma \subseteq A_{p,\eps}$.

	\medskip
	Since each $A_{p,\eps}$ contains a neighbourhood of $p$
	in $Strain_{\eps}[u]$, it is possible to take  a countable subcollection $\{(A_k,Y_k,\varphi_k)\}$ of the collection $\{(A_{p,\eps},Y_{p,\eps},\varphi_{p,\eps})\}$ such that the union of the transport sets $A_k$ covers $Strain[u]$. We may also take $A_k$ to be disjoint, by replacing $A_k$ by $A_k\setminus\bigcap_{k'<k}A_{k'}$. This does not change the property that $\varphi(Y_k)\cap\gamma$ is a singleton for every transport ray $\gamma$ in $A_k$.
\end{proof}

A \emph{parallel line-cluster} is a set $ B \subseteq \RR^2$ of the form
	\begin{equation}\label{lineclusterform}
				B = \left \{ (y,t) \in \RR^2 \, ; \, y \in Y, \ a_y < t < b_y \right \},
	\end{equation}
	where $Y \susbeteq \RR$ is a Borel set and $a: Y \rightarrow [-\infty, 0)$ and $b: Y \rightarrow (0, +\infty]$ are Borel measurable functions.

\begin{corollary}[``Local straightening of the partition''] \label{straightcor} For each of the transport sets $A_k$ from Lemma \ref{decomplemma} there is a parallel line-cluster
$$B_k =\left \{ (y,t) \in \RR^2 \, ; \, y \in Y_k, \ a_{k,y} < t < b_{k,y} \right \} $$
	and a locally Lipschitz bijection $F_k: B_k \rightarrow A_k$ such that the following hold:
	\begin{enumerate}
		\item[(a)] For every $y \in Y_k$,  $F_k(y,0) = \vphi_k(y)$.
		\item[(b)] For any $y \in Y_k$, the curve $t \mapsto F_k(y,t)$ defined for $a_y < t < b_y$ is a transport
		ray.
		\item[(c)] The map
 		\begin{equation}\label{vdef}
 		y \mapsto \frac{\partial F_k(y,t)}{\partial t} \Bigg\vert_{t=0} \end{equation}
 		is locally Lipschitz.
		\item[(d)] Write $E_k \subseteq M$ for the set of all limit points of the form $\lim_{t \rightarrow a_y^+} F_k(y,t)$ or
		$\lim_{t \rightarrow b_y^-} F_k(y,t)$, whenever the limit exists, where $y$ ranges over the set $Y_k$. Then $E_k$
		has measure zero.
	\end{enumerate}
\end{corollary}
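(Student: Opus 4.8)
The plan is to build $F_k$ from the arclength parametrization of the transport rays filling $A_k$. For $y \in Y_k$ let $\gamma_y$ be the transport ray through $\vphi_k(y)$ of \eqref{rayeq}, shifted in the parameter so that $\gamma_y(0) = \vphi_k(y)$, and put $a_{k,y} = -\alpha(\vphi_k(y)) \in [-\infty,0)$, $b_{k,y} = \beta(\vphi_k(y)) \in (0,+\infty]$, $F_k(y,t) := \gamma_y(t)$ on $B_k := \{(y,t) : y \in Y_k,\ a_{k,y} < t < b_{k,y}\}$. Since $\alpha,\beta$ are Borel and $\vphi_k$ is continuous, $a_{k,\cdot}$ and $b_{k,\cdot}$ are Borel, so $B_k$ is a genuine parallel line-cluster. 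Properties (a) and (b) hold by construction (for (b), a parameter shift of a transport ray is again a transport ray). Bijectivity of $F_k \colon B_k \to A_k$ follows from three facts supplied by Lemma \ref{decomplemma} and Proposition \ref{uprops}(ii): $A_k$ is the \emph{disjoint} union of the transport rays it contains, each such ray meets $\vphi_k(Y_k)$ in exactly one point, and $\vphi_k$ as well as each unit-speed minimizing geodesic is injective.

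For (c), observe that $\partial_t F_k(y,0) = \dot\gamma_y(0) = \nabla u(\vphi_k(y))$, which is locally Lipschitz in $y$ because $\vphi_k$ is locally Lipschitz and $\nabla u$ is locally Lipschitz on $\vphi_k(Y_k)$ (Lemma \ref{decomplemma}). To see that $F_k$ itself is locally Lipschitz I would use that transport rays are geodesics of $\Phi$, hence projections of integral curves of the geodesic spray, a smooth vector field on the slit tangent bundle $TM \setminus \{0\}$. Writing $\Theta(y) := (\vphi_k(y),\nabla u(\vphi_k(y)))$ — locally Lipschitz by the previous sentence — and letting $\Psi$ denote the (smooth, locally defined) flow of the spray and $\pi \colon TM \to M$ the projection, we have $F_k(y,t) = \pi(\Psi_t(\Theta(y)))$ on $B_k$. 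For $(y_0,t_0) \in B_k$ the compact parameter segment between $0$ and $t_0$ keeps $\Psi_s(\Theta(y_0))$ inside the open domain of $\Psi$, so $\Psi$ is Lipschitz on a neighbourhood of $(\Theta(y_0),t_0)$; composing with the locally Lipschitz $\Theta$ and the smooth $\pi$ yields that $F_k$ is Lipschitz near $(y_0,t_0)$.

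The real content is (d). The obstacle is that the points of $E_k$ are, a priori, only limits of points of $A_k$ and lie \emph{outside} $A_k$, while $F_k$ does not extend continuously to $\partial B_k$ because the ray-length functions $a_{k,\cdot},b_{k,\cdot}$ are merely semicontinuous. I would get around this by passing to the honest geodesic extension: for $y \in Y_k$ let $(\underline A(y),\overline B(y)) \supseteq (a_{k,y},b_{k,y})$ be the maximal interval on which $\gamma_y$ exists as a geodesic of $\Phi$, set $\tilde B_k := \{(y,t) : \underline A(y) < t < \overline B(y)\}$ and $\tilde F_k(y,t) := \gamma_y(t)$ there. The same spray argument shows $\tilde F_k$ is locally Lipschitz on $\tilde B_k$. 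The key claim is that every finite transport-ray endpoint is an interior point of this extended domain: if $b_{k,y} < \infty$ and $z := \lim_{t \to b_{k,y}^-}\gamma_y(t)$ exists in $M$, then $b_{k,y} < \overline B(y)$ — for otherwise $\gamma_y$, for $t$ close to $\overline B(y) < \infty$, would stay in a fixed compact neighbourhood of $z$ together with its unit-length velocity, so the spray flow line would extend past $\overline B(y)$, contradicting maximality — and hence $z = \tilde F_k(y,b_{k,y})$; the left endpoints are handled symmetrically. Therefore $E_k \subseteq \tilde F_k(\Gamma_k)$, where $\Gamma_k := \tilde B_k \cap \big(\mathrm{graph}(b_{k,\cdot}) \cup \mathrm{graph}(a_{k,\cdot})\big)$.

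To conclude, I would invoke two elementary facts. First, the graph of a Borel function from a subset of $\RR$ to $\RR$ is Lebesgue-null in $\RR^2$, since each of its vertical slices is a single point and Fubini applies; hence $\Gamma_k$ is Lebesgue-null. Second, a locally Lipschitz map defined on a subset of $\RR^2$ sends Lebesgue-null sets to $\mu$-null subsets of the surface $M$: cover the null set by countably many pieces on which the map is Lipschitz, extend each piece to a Lipschitz self-map of $\RR^2$ read in charts, and use that Lipschitz maps between planes preserve null sets. Combining these gives $\mu(E_k) = 0$, which is (d). Among all of this, the step I expect to be genuinely delicate is the passage from $F_k$ to $\tilde F_k$ — i.e. the verification that finite transport-ray endpoints are realised by the \emph{extended} flow — because that is precisely where the lack of continuity of $F_k$ on $\partial B_k$ must be neutralised; the remaining ingredients are routine manipulations of the geodesic spray and of null sets.
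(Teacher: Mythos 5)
Your proposal is correct and takes essentially the same route as the paper: the same parametrization $F_k(y,t)=\gamma_y(t)$ with $a_{k,y}=-\alpha(\vphi_k(y))$, $b_{k,y}=\beta(\vphi_k(y))$, bijectivity from the disjointness of transport rays and property ($\star$) of Lemma \ref{decomplemma}, parts (a)--(c) and the local Lipschitzness of $F_k$ from smooth dependence of the geodesic flow on the locally Lipschitz initial data $y\mapsto\nabla u(\vphi_k(y))$, and part (d) by exhibiting $E_k$ as a locally Lipschitz image of a Lebesgue-null union of two Borel graphs. Your explicit passage to the maximal geodesic extension $\tilde F_k$ is simply a self-contained replacement for the paper's extension of $F_k$ by continuity and its appeal to \cite[Lemma 3.1.8]{K1}; the only case you leave implicit, an infinite endpoint $b_{k,y}=+\infty$ (or $a_{k,y}=-\infty$) with an existing limit, is vacuous, since a unit-speed geodesic of infinite length cannot converge to a point of $M$.
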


\begin{proof}
	Fix $k \ge 1$ and let $A = A_k, Y = Y_k, \varphi = \varphi_k$ be as in Lemma \ref{decomplemma}.
	For each $y \in Y$ let $\gamma_y$ be the transport ray satisfying $\dot\gamma_y(0) = \nabla u (\varphi(y))$. Let
	$$a_y = -\alpha(\varphi(y)), \qquad b_y = \beta(\varphi(y))$$
	where $\alpha$ and $\beta$ are defined by \eqref{rayeq}. Define $B$ by \eqref{lineclusterform} and let
	$$F(y,t) = \gamma_y(t), \qquad (y,t) \in B.$$
	The transport ray $\gamma_y$ is a unit-speed geodesic, and in the case where $\lim_{t \rightarrow a_y^+} \gamma_y(t)$ exists
 we define $F(y,a_y)$ by continuity. We similarly define $F(y, b_y)$ by continuity whenever $\lim_{t \rightarrow b_y^-} \gamma_y(t)$ exists.
 	
	\medskip
	By Lemma \ref{decomplemma} the  function $F$ is a bijection on $B$, since distinct transport rays do not intersect, and the function $\nabla u \circ \varphi$ is locally Lipschitz on $Y$. Since $(\partial F / \partial t)(y,0) = \dot\gamma_y(t) = \nabla u (\vphi(y))$, property (c) follows, and moreover the function $F$ is locally Lipschitz on the set $B$ defined  in \eqref{lineclusterform}.
	The functions $a$ and $b$ are Borel since the functions $\alpha$ and $\beta$ are, so $B$ is a parallel line cluster.  The set $E_k$ is the image under the map $F$ of the set
	$$ \left\{(y,t) \in B  \ \big\vert \ t \in \{a_y, b_y\}, \ F_y(t) \text{ is defined } \right\} \subseteq \RR^{n-1} \times \RR = \RR\times \RR,$$
	which is contained in the union of two graphs of measurable functions (of one variable, as $n=2$). Since $F$ is locally Lipschitz, we conclude that $E_k$  has measure zero (see \cite[Lemma 3.1.8]{K1}).
\end{proof}

\begin{corollary}\label{straincor}
	Almost any point in $M$ belongs either to $Strain[u]$ or to $Loose[u]$. Moreover, almost all points in the support of $\mu_1 - \mu_2$ belong to $Strain[u]$.
\end{corollary}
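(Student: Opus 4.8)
The plan is to treat the two assertions of Corollary \ref{straincor} in turn. For the first, I would show that the leftover set $N := M \setminus (Strain[u] \cup Loose[u])$ consists entirely of ``endpoint limits'' of transport rays and is therefore null by Corollary \ref{straightcor}(d). The key point: if $x \in N$ then, since $x \notin Loose[u]$, there is $y \neq x$ with $(x,y) \in \Omega_u$ or $(y,x) \in \Omega_u$. Consider the first case; by geodesic convexity there is a forward-minimizing, unit-speed geodesic $\sigma : [0,\ell] \to M$ from $x$ to $y$, and by Proposition \ref{uprops}(iii) its relative interior lies in a transport ray $\gamma : I \to M$. Since $\sigma|_{(0,\ell)}$ and $\gamma$ overlap and both are unit-speed, after a parameter shift we may assume $\gamma(t) = \sigma(t)$ for $t \in (0,\ell) \subseteq I$. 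If $0$ were an interior point of $I$, then for small $\delta > 0$ the point $w = \gamma(-\delta)$ would be distinct from $\gamma(0) = \lim_{t \to 0^+}\sigma(t) = x$ and would satisfy $u(x) - u(w) = \delta = d(w,x)$ by \eqref{transportrayeqn} and minimality of $\gamma$, i.e.\ $(w,x) \in \Omega_u$; combined with $(x,y) \in \Omega_u$ this would force $x \in Strain[u]$, a contradiction. Hence $\inf I = 0$ and $x = \lim_{t \to 0^+}\gamma(t)$ is a lower endpoint limit of $\gamma$; the case $(y,x) \in \Omega_u$ is symmetric and produces an upper endpoint limit. Since $\{A_k\}$ from Lemma \ref{decomplemma} partitions $Strain[u]$ into transport sets, each transport ray lies in some $A_k$ and is of the form $t \mapsto F_k(y,t)$ with $y \in Y_k$ (Corollary \ref{straightcor}(a),(b)), so every such endpoint limit lies in the corresponding $E_k$. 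As $\bigcup_k E_k$ is null by Corollary \ref{straightcor}(d), so is $N$.

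For the second assertion, I would interpret ``almost all'' with respect to $\mu_1 - \mu_2$, i.e.\ prove $|\mu_1-\mu_2|(M \setminus Strain[u]) = 0$. Since $M \setminus Strain[u] = Loose[u] \cup N$ with $N$ null and the $\mu_i$ absolutely continuous, it suffices to show that $\mu_1$ and $\mu_2$ agree on $Loose[u]$. Normalize $\mu_1,\mu_2$ to probability measures and invoke Kantorovich duality: there is a transport plan $\pi$ (a probability measure on $M \times M$ with the prescribed marginals) that is concentrated on $\Omega_u$, because the optimal plan for the cost $d$ realizes the dual value attained by $u$ in \eqref{MK}. Split $\pi = \pi|_\Delta + \pi|_{\Delta^c}$ along the diagonal $\Delta = \{(x,x)\}$. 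If $(x,y) \in \Omega_u$ with $x \neq y$, then neither $x$ nor $y$ can be a loose point (the pair $(x,y)$ itself witnesses non-looseness of both coordinates), so $\pi|_{\Delta^c}$ assigns zero mass to $(Loose[u]\times M) \cup (M \times Loose[u])$. Hence for every Borel $S \subseteq Loose[u]$,
\[ \mu_1(S) = \pi(S \times M) = \pi|_\Delta(S \times M) = \pi|_\Delta(M \times S) = \pi(M \times S) = \mu_2(S), \]
the middle equality holding because $\pi|_\Delta$ lives on the diagonal. Thus $\mu_1 = \mu_2$ on $Loose[u]$, and the second assertion follows.

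I expect the main obstacle to be the first part, specifically the verification that a point of $N$ is genuinely an endpoint limit of a transport ray rather than an interior point of one: this is where one must carefully combine Proposition \ref{uprops}(iii), the overlap/uniqueness of minimizing geodesics, and the definitions of $Strain[u]$ and $Loose[u]$, and then match the resulting ray with the parametrizations $F_k$ so that Corollary \ref{straightcor}(d) applies. The appeal to a transport plan concentrated on $\Omega_u$ in the second part is classical (and could, if preferred, be replaced by a direct argument), so it is not the delicate step. One should also record that on $Loose[u]$ the two measures literally coincide but $Loose[u]$ may meet $\supp(\mu_1-\mu_2)$ in a set of positive ambient measure, which is why ``almost all'' is taken with respect to $\mu_1-\mu_2$ — equivalently $|\mu_1-\mu_2|$, which is concentrated on $\supp(\mu_1-\mu_2)$.
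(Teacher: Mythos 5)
Your argument is correct. For the first assertion it is essentially the paper's own proof: the paper likewise observes that a point outside $Strain[u]\cup Loose[u]$ admits some $y\neq x$ with $(x,y)\in\Omega_u$ or $(y,x)\in\Omega_u$, invokes Proposition \ref{uprops}(iii), notes that such a point cannot lie \emph{on} a transport ray, and concludes that it is an endpoint limit of a ray, hence lies in $\bigcup_k E_k$, which is null by Corollary \ref{straightcor}(d) (the paper's text cites item (c) there, but this is a slip; you cite the right item, and your extra verification that $\inf I=0$, via the strain definition, is a fine way to make the terse step ``$x$ is not on any ray, hence is an endpoint limit'' explicit).

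For the ``moreover'' part you genuinely diverge from the paper, which simply defers to \cite[Theorem 1.5(B)]{K1}. You instead give a self-contained argument: take an optimal plan concentrated on $\Omega_u$ (Kantorovich duality with no gap for the continuous, possibly non-symmetric cost $d$, with first marginal $\mu_1$ and second $\mu_2$ to match the sign convention in \eqref{MK}), observe that off-diagonal pairs in $\Omega_u$ have no loose coordinate, and conclude $\mu_1=\mu_2$ on $Loose[u]$. This is correct, and what it buys is independence from the external reference; what it costs is a reinterpretation of the statement: under the paper's convention, ``almost all points in the support of $\mu_1-\mu_2$ belong to $Strain[u]$'' reads as an ambient-measure statement about $\mathrm{supp}(\mu_1-\mu_2)\setminus Strain[u]$, whereas you prove the a priori weaker statement $\rho_1=\rho_2$ $\mu$-a.e.\ off $Strain[u]$ (equivalently $|\mu_1-\mu_2|(M\setminus Strain[u])=0$). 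You flag this yourself, and the reinterpretation is harmless: in the only place the corollary is used, namely the reduction in the proof of Theorem \ref{finslerdecomp} to $\rho_1\equiv\rho_2$ on $M\setminus\mathbf{A}$ after a null modification, your version is exactly what is needed (and it is also what \cite{K1} provides, the paper's phrasing via the support being a loose rendering of it). So the proposal stands as a valid, slightly more detailed and self-contained alternative to the paper's proof of the second assertion.
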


\begin{proof}
	We claim that all points which are neither in $Strain[u]$
	nor in $Loose[u]$ are limit points of the form described in Corollary \ref{straightcor}(c). Indeed, if $y \not \in Strain[u] \cup Loose[u]$
	then we know that $(x,y) \in \Omega_u$ or $(y,x) \in \Omega_u$ for some $x \neq y$.
	In the first case, the relative interior of any minimizing geodesic from $x$ to $y$ is contained in a transport ray, and in the second case, the relative interior of any minimizing geodesic from $y$ to $x$ is contained in a transport ray. This follows from Proposition \ref{uprops}(iii). However, the point $y$ does not belong to any transport ray, since it is not in $Strain[u]$.
	Therefore $y$ must be a limit point as in (c). It now follows from Corollary \ref{straightcor}(c) that
	$Strain[u] \cup Loose[u]$ is a set of full measure in $M$. The ``moreover'' part is proven as in \cite[Theorem 1.5(B)]{K1}.
\end{proof}

\subsection{A needle decomposition theorem for Finsler surfaces}\label{finsdecompsubsec}

We can now conclude the Finslerian needle decomposition theorem.

\begin{theorem}\label{finslerdecomp}
	Let $(M,\Phi)$ be a two-dimensional geodesically-convex Finsler manifold and let $\mu$ be an absolutely continuous measure on $M$ with a smooth density. Let $\rho_1, \rho_2: M \rightarrow [0, \infty)$ be $\mu$-integrable, compactly-supported functions with
	$$ \int_M \rho_1 d \mu = \int_M \rho_2 d \mu. $$
	Then there is a collection $\Lambda$ of disjoint minimizing geodesics, a measure $\nu$ on $\Lambda$ and a family $\{\mu_\gamma\}_{\gamma \in \Lambda}$ of Borel measures on $M$ such that the following hold:
	\begin{enumerate}
		\item[(i)] For all $\gamma\in \Lambda $, the measure $\mu_{\gamma}$ is supported on $\gamma$.
		\item[(ii)] (``disintegration of measure'') For any measurable set $S \susbeteq M$,
		\begin{equation}\label{disinteqn}
		\mu(S) = \int_{\Lambda} \mu_{\gamma}(S) d \nu(\gamma).
		\end{equation}
		
		\item[(iii)] (``mass balance'') For $\nu$-almost any $\gamma \in \Lambda$
		\begin{equation}\label{MBdecomp}	\int_{M} \rho_1 d \mu_{\gamma} = \int_{M} \rho_2 d \mu_{\gamma}, \end{equation}
		and moreover
		\begin{equation}\label{MBendsdecomp}	\int_{M} \rho_1 d \mu_{\gamma^+} \le \int_{M} \rho_2 d \mu_{\gamma^+} \end{equation}
		whenever $\gamma^+$ is a positive end of $\gamma$. (Recall that a curve $\gamma^+$ is said to be a positive end of a curve $\gamma : I \to M$ if it is a restriction of $\gamma$ to a subinterval of the form $I \cap [t,\infty)$, and the measure $\mu_{\gamma^+}$ is the restriction of $\mu_\gamma$ to the image of $\gamma^+$.)
	\end{enumerate}
\end{theorem}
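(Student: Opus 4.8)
The plan is to run the $L^1$-transport (``needle'') argument, assembling the pieces prepared in Subsections \ref{partsubsec}--\ref{finsdecompsubsec}. First I would apply the machinery of Subsection \ref{partsubsec} to the finite, absolutely continuous measures $\mu_1 := \rho_1 \mu$ and $\mu_2 := \rho_2 \mu$, which have equal total mass, and fix a Kantorovich potential $u$ for the pair $(\mu_1, \mu_2)$. By Corollary \ref{straincor}, up to a set of $\mu$-measure zero $M$ is the disjoint union of $Strain[u]$ and $Loose[u]$, and the set $\{\rho_1 \neq \rho_2\}$ meets $Loose[u]$ in a $\mu$-null set; in particular $\rho_1 = \rho_2$ holds $\mu$-almost everywhere on $Loose[u]$. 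On $Loose[u]$ the needles will be points: I take the restriction of $\nu$ to this part to be $\mu|_{Loose[u]}$, regarded as a measure on the family of singletons $\{x\}$, $x \in Loose[u]$, and set $\mu_{\{x\}} := \delta_x$. Then $\int \mu_{\{x\}}(S)\, d\mu(x) = \mu(S \cap Loose[u])$, mass balance reads $\int \rho_1\, d\mu_{\{x\}} = \rho_1(x) = \rho_2(x) = \int \rho_2\, d\mu_{\{x\}}$ for $\mu$-almost every $x$, and \eqref{MBendsdecomp} is automatic since a positive end of a point needle is the needle itself.

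The substance is the disintegration over $Strain[u]$. Using Lemma \ref{decomplemma} and Corollary \ref{straightcor} I would write $Strain[u] = \bigsqcup_k A_k$ together with parallel line-clusters $B_k \subseteq \RR^2$ and locally Lipschitz bijections $F_k : B_k \to A_k$ whose fibres $t \mapsto F_k(y,t)$, $y \in Y_k$, run over the transport rays filling $A_k$. Since $F_k$ is locally Lipschitz it maps Lebesgue-null subsets of $B_k$ to $\mu$-null subsets of $M$, so the push-forward $\tilde\mu_k := (F_k^{-1})_*(\mu|_{A_k})$ is absolutely continuous on $B_k$, say with density $h_k(y,t)$. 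Disintegrating $\tilde\mu_k$ in the $t$-variable by Fubini--Tonelli yields, for almost every $y \in Y_k$, a one-dimensional measure $m_{k,y}$ on the fibre $(a_{k,y}, b_{k,y})$ with density $t \mapsto h_k(y,t)$; I then declare the needle over $y$ to be the transport ray $\gamma_{k,y} := F_k(y, \cdot)$ equipped with $\mu_{\gamma_{k,y}} := (\gamma_{k,y})_* m_{k,y}$, and let the restriction of $\nu$ to this block be the image of Lebesgue measure on $Y_k$ under $y \mapsto \gamma_{k,y}$. Conclusion (i) is then immediate; the change of variables along $F_k$ together with Fubini on $B_k$ gives $\mu(S \cap A_k) = \int_{Y_k} \mu_{\gamma_{k,y}}(S)\, dy$, and summing over $k$ and adding the contribution of $Loose[u]$ (the leftover $\mu$-null set contributing nothing) produces the disintegration \eqref{disinteqn}; measurability of $\gamma \mapsto \mu_\gamma(S)$ follows from that of the densities $h_k$.

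For the mass balance: for any Borel $Z \subseteq Y_k$ the set $\bigcup_{y \in Z} \gamma_{k,y} = F_k((Z \times \RR) \cap B_k)$ is a transport set, so by Proposition \ref{uprops}(iv), $\int_{\bigcup_{y \in Z} \gamma_{k,y}} \rho_1 \, d\mu = \int_{\bigcup_{y \in Z} \gamma_{k,y}} \rho_2 \, d\mu$; rewriting both sides via \eqref{disinteqn} applied to the measures $\rho_i\, d\mu$ gives $\int_Z \left( \int \rho_1 \, d\mu_{\gamma_{k,y}} \right) dy = \int_Z \left( \int \rho_2 \, d\mu_{\gamma_{k,y}} \right) dy$ for every such $Z$, whence \eqref{MBdecomp} holds for almost every $y$. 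Repeating the argument with a positive end $\bigcup_{y \in Z} F_k(\{y\} \times ((a_{k,y}, b_{k,y}) \cap [t,\infty)))$ of a transport set, and invoking the inequality \eqref{uMB2} in place of the equality, I obtain \eqref{MBendsdecomp} for each fixed $t$ and almost every $y$; running $t$ over a countable dense set and using that $t \mapsto \int \rho_i \, d\mu_{\gamma_{k,y}|_{[t,\infty)}}$ is continuous — the needle measures $\mu_{\gamma_{k,y}}$ being non-atomic — upgrades this to all $t$ simultaneously, for almost every $y$.

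I expect the main difficulty to be not a single geometric estimate, since the key regularity input — that the straightening maps $F_k$ are locally Lipschitz, which is what makes the pulled-back measures absolutely continuous with honest densities — is already in hand from Lemma \ref{locliplemma} and Corollary \ref{straightcor}; rather, the care goes into the measure-theoretic bookkeeping, namely verifying the measurability of $\gamma \mapsto \mu_\gamma(S)$ across the countable patchwork $\{(A_k, B_k, F_k)\}$ and faithfully transferring the set-wise mass balance of Proposition \ref{uprops}(iv), together with its ``positive end'' refinement, down to individual needles.
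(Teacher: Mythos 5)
Your proposal is correct and follows essentially the same route as the paper: the same Kantorovich potential, the transport-ray machinery of Proposition \ref{uprops}, Lemma \ref{decomplemma} and Corollary \ref{straightcor}, Dirac needles off $Strain[u]$ via Corollary \ref{straincor}, and Proposition \ref{uprops}(iv) to transfer mass balance to $\nu$-almost every needle. The only cosmetic differences are that you produce the fibre densities by Radon--Nikodym and Fubini on the pulled-back measure rather than via the explicit Jacobian change-of-variables formula \eqref{changeofvariablesFk} (and omit the $e^{\pm|y|}$ normalization, which is inessential for the statement), and that you obtain \eqref{MBendsdecomp} from \eqref{uMB2} by a countable dense set of cut-points plus continuity in $t$, instead of the paper's measurable selection of violating positive ends --- both are valid.
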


\begin{proof}
Let $\mu_1,\mu_2$ be the measures whose densities with respect to $\mu$ are $\rho_1,\rho_2$, respectively. We fix a solution $u$ to the optimization problem \eqref{MK}, and apply to it the results of the previous section. By Lemma \ref{decomplemma} and Corollary \ref{straightcor} there exist disjoint transport sets $(A_k)_{k \geq 1}$ whose union covers $Strain[u]$, as well as and locally Lipschitz maps $F_k : B_k \to A_k$, where $B_k$ is a parallel line-cluster:
\begin{equation}\label{lineclusterformk}
				B_k = \left \{ (y,t) \in \RR^2 \, ; \, y \in Y_k, \ a_{k,y} < t < b_{k,y} \right \},
\end{equation}
and $F_k(y,\cdot)$ is a constant speed minimizing geodesic for every $y \in Y_k$.

\medskip
Denote
	\begin{equation}
	\mathbf{A} : = \bigsqcup_kA_k.
	\end{equation}
	By Lemma \ref{decomplemma} and Corollary \ref{straincor}, the set $\mathbf{A}$ contains the support of $\mu_1 - \mu_2$ up to a set of measure zero. Since the formulation of Theorem \ref{finslerdecomp} is indifferent to altering $\rho_1,\rho_2$ on a set of measure zero we may therefore assume that
	\begin{equation}\label{rhocoincide}
	\rho_1 \equiv  \rho_2 \qquad \text{ on } M \setminus \mathbf{A}.
	\end{equation}
	We now decompose $M$ into a disjoint union of minimizing geodesics. Recall that our notation
does not distinguish between the curve $F_k(y, \cdot)$ as a map defined on an interval, and its image $\{ F_k(y, t) \, ; \, a_{k,y} < t < b_{k,y} \}$ which is just a subset of $M$.
Let
	\begin{equation*}
	\Lambda: = \bigsqcup_{k = 0}^\infty \Lambda_k, \qquad \text{where } \qquad \begin{aligned} \Lambda_k  &:= \{F_k(y , \cdot) \, ; \,  y \in Y_k\} \qquad k \ge 1,\\ \Lambda_0 &: = \{\{p\} \, ; \,   p \in M \setminus \mathbf{A} \}. \end{aligned}
	\end{equation*}
	Thus for $k \ge 1$ the set $\Lambda_k$ consists of transport rays which are images of lines $y = \mathrm{const}$ in $B_k$ under the map $F_k$, while the set $\Lambda_0$ consists of all singletons which are not contained in $\mathbf{A}$.

	\medskip
	Since the set $\mathbf{A}$ is the union of the sets $A_k = F_k(B_k)$, the disjoint union of the members of $\Lambda$ is the entire manifold $M$, and we have a map
	$$\pi: M \to \Lambda$$
	assigning to each $p \in M$ the unique member of $\Lambda$ containing  $p$.
	
	\medskip For each $k \geq 1$, the map $F_k$ is locally Lipschitz, and therefore the function
	\begin{equation} J_k = |\det dF_k| \label{eq_342} \end{equation}
	is defined almost-everywhere on $B_k$. The determinant here is understood using the Euclidean volume form on $B_k$ and using the density $\mu$ on the manifold $M$. Since $F_k$ is locally Lipschitz, it follows from Fubini's theorem and the change of variables formula (see e.g. Evans and Gariepy \cite{EG}),  that for every $\mu$-measurable function $\psi : A_k \to \RR$,
	\begin{equation}\label{changeofvariablesFk}
	\int_{A_k}\psi d\mu = \int_{B_k}(\psi \circ F_k) J_k dtdy = \int_{Y_k}\int_{a_{k,y}}^{b_{k,y}} \psi (F_k(y,t)) J_k(y,t) dtdy,
	\end{equation}
where we recall that the parallel line-cluster $B_k$ has the form \eqref{lineclusterformk}.

\medskip 	
The measures $\mu_\gamma$ are defined as follows:
	\begin{itemize}
		\item If $\gamma = F_k (y , \cdot)\in \Lambda_k$ for some $k \geq 1$ and  $y\in Y_k$, then we set
		\begin{equation}\label{mugamma}\mu_\gamma = \gamma_\#(e^{|y|}J_k(y,t)dt), \end{equation}
		i.e. $\mu_\gamma$ is the pushforward of the measure $e^{|y|}J_k(y,t)dt$ on $(a_{k,y}, b_{k,y})$ via the map $\gamma : (a_{k,y}, b_{k,y}) \to M$. The factor $e^{|y|}$ is there so that the measure $\nu_k$ defined below will be finite.
		\item If $\gamma = \{p\}\in\Lambda_0$ for some $p \in M \setminus A$, then we set
		$$\mu_\gamma = \delta_p,$$
		where $\delta$ denotes a Dirac measure.
	\end{itemize}
	Next, we define the measure $\nu$ on $\Lambda$.
	\begin{itemize}
		\item
		For each $k \ge 1$ define $f_k : Y_k \to M$ by
		$$f_k(y) = F_k(y, 0),$$
		and define a measure $\nu_k$ on $\Lambda_k$ by
		$$\nu_k : = (\pi \circ f_k)_\#(e^{-|y|}dy).$$
		Note that here $e^{-|y|}dy$ is a measure on $Y_k \subseteq \RR$. (When we push-forward a measure, we push-forward its $\sigma$-algebra as well.)
		\item
		On $\Lambda_0$ define the measure
		$$\nu_0 : = \pi_\# (\mu\vert_{M\setminus A}).$$
	\end{itemize}
	Finally, let $\nu$ be the measure on $\Lambda$ satisfying
	$$\nu\vert_{\Lambda_k} = \nu_k \qquad \text{ for all } k \geq 0.$$
	It remains to verify conclusions (ii) and (iii) of Theorem \ref{finslerdecomp}. For every measurable set $S \subseteq M$, and for every $k\ge 1$, by virtue of \eqref{changeofvariablesFk},
	\begin{align*}
	\mu(S \cap A_k) & = \int_{Y_k} \int_{a_{k,y}}^{b_{k,y}}\chi_{S}(F_k(y,t)) J_k(y,t) dt dy \\
	& = \int_{Y_k}\int_{\pi(f_k(y))}\chi_S(x)e^{-|y|}d\mu_{\pi(f_k(y))}(x) dy\\
	& = \int_{Y_k}  \mu_{\pi(f_k(y))}(S) e^{-|y|} dy\\
	& = \int_{\Lambda_k}\mu_\gamma(S)d\nu_k(\gamma).
	\end{align*}
	Summing this over $k$ gives
	\begin{equation}\label{muSA}
	\mu(S \cap \mathbf{A}) = \sum_{k=1}^\infty\int_{\Lambda_k}\mu_\gamma(S) d\nu_k(\gamma).
	\end{equation}
	Moreover, by our definition of $\nu_0$,
	\begin{align*}
	\mu(S \setminus \mathbf{A})
	& = \int_{M \setminus \mathbf{A}} \chi_{S}(p) d\mu(p)\\
	& = \int_{M \setminus \mathbf{A}} \delta_p(S) d\mu(p)\\
	& = \int_{\Lambda_0} \mu_{\gamma}(S) d\nu_0(\gamma)
	\end{align*}
	which together with \eqref{muSA} and the definition of $\nu$ implies \eqref{disinteqn}.
	In order to prove \eqref{MBdecomp}, it suffices to prove that for every $\nu$-measurable subset $T \subseteq \Lambda$,
	\begin{align}\label{Sinteq}
	\int_T \left(\int_M\rho_1 d\mu_\gamma\right) d\nu(\gamma) &= \int_T \left(\int_M\rho_2 d\mu_\gamma\right) d\nu(\gamma).
	\end{align}
	To this end, we observe that for $i = 1,2$,
	\begin{align} \nonumber
	\int_T \left(\int_M\rho_i d\mu_\gamma\right) d\nu(\gamma) & = \int_{\Lambda} \left(\int_M \chi_{\pi^{-1}(T)}  \rho_i d\mu_\gamma\right) d\nu(\gamma)\\
	& = \int_{\pi^{-1}(T)} \rho_i d\mu\\
	& = \sum_{k=0}^\infty\int_{\pi^{-1}(T\cap\Lambda_k)}\rho_id\mu,
	\label{intrhoi}
	\end{align}
	where the second equality follows from \eqref{disinteqn}. Now observe that if $k \ge 1$ then $\pi^{-1}(T\cap\Lambda_k)$ is a transport set so
	\begin{equation}\label{kwiseequality} \int_{\pi^{-1}(T\cap\Lambda_k)}\rho_1d\mu = \int_{\pi^{-1}(T\cap\Lambda_k)}\rho_2 d\mu\end{equation}
	by Proposition \ref{uprops}(iv), while if $k=0$ then \eqref{kwiseequality} follows from \eqref{rhocoincide} since $\pi^{-1}(T\cap\Lambda_0) \subseteq M \setminus\mathbf{A}$. Thus \eqref{Sinteq} is proven.

	\medskip
	Finally, let $\hat\Lambda \subseteq \Lambda$ be a $\nu$-measurable set such that for every $\gamma \in\hat\Lambda$, inequality \eqref{MBendsdecomp} fails for some positive end $\gamma^+$ of $\gamma$. We may choose such a positive end $\gamma^+$ for each $\gamma \in \hat \Lambda$ so that the set $S^+:=\bigcup_{\gamma \in \hat \Lambda}\gamma^+$ is a Borel set and therefore a positive end of a transport set. It then follows from Proposition \ref{uprops}(iv) that
	$$0\le \int_{S^+}(\rho_2-\rho_1)d\mu = \int_{\hat\Lambda}\left(\int(\rho_2-\rho_1)d\mu_{\gamma^+}\right) d\nu(\gamma).$$
	Since the integrand is negative by the choice of $\gamma^+$, we must conclude that $\nu(\hat\Lambda) = 0$. This finishes the proof of Theorem \ref{finslerdecomp}.
\end{proof}

\subsection{Proof of Theorem \ref{horodecomp}}
The proof of Theorem \ref{horodecomp} is based on the observation that the Finslerian needle decomposition Theorem \ref{finslerdecomp} is oblivious to the  parametrization of the geodesics (though their orientation does matter). We use the fact that oriented horocycles coincide, as directed curves, with the geodesics of the metric \eqref{horofinsler}, in order to apply Theorem \ref{finslerdecomp} and obtain conclusions (i)-(iii) of Theorem \ref{horodecomp}. The fact that the density of the needles is affine, which is the content of Theorem \ref{horodecomp}(iv), is a consequence of the following key lemma about horocycles. Roughly speaking, it asserts that a coordinate system $(y,t)$ of the hyperbolic plane in which the curves $y = \mathrm{const}$ are horocycles and $t$ is arclength, has an area distortion which is affine in $t$.

\begin{lemma}\label{detdFaffine}
	Let $Y \subseteq \RR$ be a Borel set and let
		$$\lambda : Y \to (0,\infty), \qquad t_0 : Y \to \RR \qquad \text{ and } \qquad \omega: Y \to S^1$$
	be locally Lipschitz functions. Define
		\begin{equation}\label{Fformula} F(y,t) : = \alpha_{\lambda(y),t_0(y),\omega(y)}(t), \qquad y \in Y, \qquad t \in \RR, \end{equation}
	 where $\alpha$ is defined in \eqref{alphaformula}. Then for almost every $y \in Y$, the function
		$$t \mapsto \det dF(y,t)$$
	is affine-linear.
\end{lemma}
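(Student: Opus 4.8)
The plan is to derive the statement by a direct computation from the explicit parametrization \eqref{alphaformula}. Since $\lambda,t_0$ and $\omega$ are locally Lipschitz, they are differentiable at almost every $y\in Y$; fix such a $y$. Because $|\omega|\equiv 1$ on $Y$, differentiating gives $\Re\big(\overline{\omega(y)}\,\omega'(y)\big)=0$, so $\omega'(y)=i\,c(y)\,\omega(y)$ for a real number $c(y)$. As $\alpha_{\lambda,t_0,\omega}(t)$ is smooth in all of its arguments, the chain rule shows that $F$ is differentiable at $(y,t)$ for every $t\in\RR$, and the task reduces to computing $\det dF(y,t)$ and verifying that it is an affine function of $t$.

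I would run the computation in the Poincar\'e disc model, where $F$ is $\CC$-valued and the hyperbolic area form is $\tfrac{4}{(1-|z|^2)^2}\,dx\wedge dy$, so that
$$\det dF(y,t)=\frac{4}{\big(1-|F(y,t)|^2\big)^2}\cdot\Im\!\Big(\overline{\partial_y F(y,t)}\;\partial_t F(y,t)\Big).$$
Writing $s=t-t_0(y)$ and $w=s+(1+\lambda(y))\,i$, the three ingredients are elementary: since $|\omega|=1$ one gets $1-|F|^2=4\lambda/|w|^2$; differentiating \eqref{alphaformula} in $t$ gives $\partial_tF=2\lambda(y)\,\omega(y)\, i/w^{2}$; and differentiating in $y$ (through $\lambda(y),t_0(y)$ and using $\omega'=ic\omega$) gives $\partial_yF=\omega H$ for an explicit $H$ that does not involve $\omega$. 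The key point is that the unknown rotation $\omega$ cancels out of the Jacobian: as $|\omega|=1$ we have $\Im\!\big(\overline{\partial_yF}\,\partial_tF\big)=\Im\!\big(\overline{H}\cdot 2\lambda i/w^{2}\big)$. Expanding $\overline H\cdot 2\lambda i/w^{2}$ as a sum of terms with denominators $w^{2}$, $|w|^{2}w$ and $|w|^{4}$, multiplying by the factor $|w|^{4}/(4\lambda^{2})$ coming from $1-|F|^{2}$, and collecting real parts, one finds that every non-affine contribution cancels and
$$\det dF(y,t)=\frac{\lambda'(y)-c(y)\,\big(t-t_0(y)\big)}{\lambda(y)},$$
which is affine-linear in $t$, as required. (As a sanity check, taking $\lambda\equiv1$, $t_0\equiv0$, $\omega(y)=e^{iy}$ recovers the Jacobian $-r$ behind the ``horocyclic polar coordinates'' computation in the proof of Theorem~\ref{thm318}.)

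A slightly more structured way to organize the same calculation is to conjugate by the Cayley transform, writing $F(y,t)=\omega(y)\cdot\frac{z(y,t)-i}{z(y,t)+i}$ with $z(y,t)$ in the upper half-plane. Then the horocycle $\alpha_{\lambda(y),t_0(y),\omega(y)}$ becomes the horizontal line at Euclidean height $1/\lambda(y)$, traversed at Euclidean speed $1/\lambda(y)$ with offset governed by $t_0(y)$, so the only part of the $y$-dependence that is not a rigid motion of $\HH^2$ is the rotation factor $\omega(y)$. Splitting $\det dF$ accordingly into a rotation-free part---independent of $t$ and equal to $\lambda'(y)/\lambda(y)$, since isometries preserve the area form---and a part proportional to $c(y)$, a short computation identifies the latter with $-c(y)(t-t_0(y))/\lambda(y)$, reproducing the formula above.

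The hard part will be bookkeeping rather than anything conceptual: one must track real versus imaginary parts through products of complex fractions and check that the several a priori non-affine terms (those carrying $w^{-2}$, $|w|^{-2}$ or $|w|^{-4}$) genuinely cancel and do not merely appear to. The cleanest way to make these cancellations visible is to express every quantity in terms of $w$, $\bar w$ and $|w|^2$ and to reduce the power of $|w|$ in the denominator one step at a time. With the needle-decomposition machinery of the previous subsections already in place, this single local identity about area distortion along a smooth family of arclength-parametrized horocycles is all that remains.
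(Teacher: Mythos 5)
Your proposal is correct and follows essentially the same route as the paper: fix a point of differentiability via Rademacher, compute $\partial_t F$ and $\partial_y F$ from \eqref{alphaformula} in the disc model, and convert the Euclidean Jacobian to the hyperbolic one using $1-|F|^2 = 4\lambda/|w|^2$, with the rotation factor $\omega$ cancelling; your closed form $\bigl(\lambda'-c\,(t-t_0)\bigr)/\lambda$ agrees with the paper's $\lambda^{-1}\bigl((t-t_0)\phi'-\lambda'\bigr)$ up to the sign coming from ordering the coordinates $(y,t)$ versus $(t,y)$, which is immaterial for affine-linearity.
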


\begin{proof}

Pick a point $y \in Y$ where the maps $\lambda,t_0,\omega$ are differentiable; this holds for almost every $y \in Y$ by the Rademacher theorem
on the differentiability of locally Lipschitz functions. Then for all $a_y < t < b_y$, we see from (\ref{Fformula}) that the map $F$ is differentiable at $(y,t)$. The determinant of $dF$ with respect to the \emph{Euclidean} area forms on the domain $Y \times \RR$ and the range $\CC$, is given by
	\begin{equation}\label{dFeuc}
	 \mathrm{Im}\left(\overline{\partial_t F(y,t)}\partial_y F(y,t) \right).
	\end{equation}
	Write $\omega = e ^ {i\phi}$. Then by \eqref{Fformula},
	\begin{equation*}\label{dF}
		\partial_tF = \frac{2\lambda ie^{i\phi}}{\left((t-t_0) + i (1 + \lambda)\right)^2},
		\qquad
		 \partial_yF = \frac{ie^{i\phi}\Big( \phi' \left( (t - t_0 + i)^2 + \lambda^2 \right) - 2 ( t - t_0 + i) \lambda' -2 t_0'\lambda\Big) }{\left((t-t_0) + i (1 + \lambda)\right)^2}
	\end{equation*}
	and
	\begin{equation*}
		|F|^2 = \frac{(t-t_0)^2+(1-\lambda)^2}{(t-t_0)^2 + (1+\lambda)^2} = 1 - \frac{4 \lambda}{(t-t_0)^2 + (1+\lambda)^2},
	\end{equation*}
	whence
	\begin{align*}
	\mathrm{Im}\left(\overline{\partial_t F}\partial_y F\right)
	& = \frac{4 \lambda \phi' (t-t_0) - 4 \lambda \lambda'}{\left((t-t_0)^2 + (1+\lambda)^2\right)^2}\\
	& = \frac{(1 - |F|^2)^2}{4 \lambda } \cdot \big( (t - t_0 ) \phi '  - \lambda ' \big).
	\end{align*}
	In the Poincar\'e disc model, the hyperbolic area density form is given by $4|dz|^2 / (1 - |z|^2 )^2$. It follows that, this time with respect to the Euclidean area form on $Y \times \RR$ and the \emph{hyperbolic} area form on the unit disc,
	\begin{equation}\label{Jformula}
		\det dF =  \lambda ^ {-1} ((t - t_0)\phi ' - \lambda ')
	\end{equation}
	at any point $(y,t)$ such that $\lambda, t_0, \phi$ are differentiable at $y$. Since the expression on the right hand side of \eqref{Jformula} is affine-linear in $t$, we are done.
\end{proof}

\begin{proof}[Proof of Theorem \ref{horodecomp}]
	 Let $M = \HH^2$ be the hyperbolic plane, which we identify here with the unit disc equipped with the Poincar{\'e} metric (\ref{eq_1109}). Let $\mu$ be the hyperbolic area form and let $\rho_1,\rho_2$ satisfy the hypotheses of Theorem \ref{horodecomp}. Apply Theorem \ref{finslerdecomp} to the Finsler manifold $(M,\Phi)$ where $\Phi$ is given by \eqref{horofinsler}, and with $\mu$, $\rho_1$ and $\rho_2$ as above. By Subsection \ref{horosubsec}, the Finsler manifold $(M,\Phi)$ is geodesically convex and its geodesics are oriented horocycles, though not parametrized by hyperbolic arclength. Conclusions (i), (ii) and  (iii) of Theorem \ref{horodecomp} thus follow immediately from Theorem \ref{finslerdecomp}. It remains to prove conclusion (iv):
	\begin{itemize}
	\item[($\ast$)] For $\nu$-almost every $\gamma \in \Lambda$, the density of $\mu_\gamma$ with respect to arclength along $\gamma$ is affine-linear, except when $\gamma$ is a singleton, in which case the measure $\mu_\gamma$ is a Dirac mass.
	\end{itemize}
	We work under the notations introduced in the proof of Theorem \ref{finslerdecomp}. When $\mu_\gamma$ is not a Dirac mass, it is given by \eqref{mugamma}:
		$$\mu_\gamma = \gamma_\#(e^{|y|}J_k(y,t)dt),$$
	where $J_k(y,t) = |\det dF_k(y,t)|$ and $F_k$ is one of the functions from Corollary \ref{straightcor}. Write $F = F_k$.
	Recall that $F$ is defined on a Borel set $B\subseteq \RR^2$ of the form
	$$ B = \left \{ (y,t) \, ; \, y \in Y, \, a_y < t < b_y \right \}$$
	and that $F(t,\cdot)$ is a geodesic of $\Phi$ for almost every $y \in Y$.
	
	\medskip
	Since the geodesics of $\Phi$ correspond to oriented horocycles up to orientation-preserving reparametrization, there exists a function $t = t(y,\tau)$, stricty increasing in $\tau \in (\tilde{a}_y, \tilde{b}_y)$ and onto the interval $(a_y, b_y)$, such that the following holds: if we set
	\begin{equation}
		\tilde F(y,\tau) = F(y,t(y,\tau))
	\end{equation}
	then the curve
		$$\tilde \gamma (\tau) = \tilde F(y,\tau)$$
	is a \emph{unit-speed} oriented horocycle for almost every $y \in Y$.  The function $t$ is locally Lipschitz by Corollary \ref{straightcor}(c), together with the fact that both the hyperbolic metric and the metric $\Phi$ are positive and smooth away from the zero section. By the chain rule we then have
		\begin{equation}\label{mugammatau}\mu_\gamma = \tilde\gamma_\#(e^{|y|}\tilde J(y,\tau)d\tau)\end{equation}
	where $\tilde J := |\det d \tilde F|$.

	\medskip
	Since $\tilde F(y,\cdot)$ is a unit-speed horocycle we may write, in the disc model,
	\begin{align}
	\tilde F(y,\tau) &=  \alpha_{\lambda(y),t_0(y),\omega(y)}(\tau) \qquad \qquad (\tilde{a}_y < \tau < \tilde{b}_y)
	\end{align}
	for some functions $\lambda, t_0, \omega: Y \rightarrow (0,\infty) \times \RR \times S^1$, where $\alpha$ is defined in \eqref{alphaformula}. The map
	\begin{equation} y\mapsto(\lambda(y), t_0(y), \omega(y)) \label{eq_1026} \end{equation} is locally Lipschitz. Indeed, it is the composition of the map $y \mapsto (\partial \tilde F / \partial \tau )(y,0)$ with the map sending a unit tangent vector to the triple $(\lambda, t_0, \omega)$ corresponding to the horocycle passing through that vector at time 0. The first map is locally Lipschitz by Corollary \ref{straightcor}(c), and the second is smooth by the formulae \eqref{parametereq}. We can therefore use Lemma \ref{detdFaffine} and conclude that the map $\tau \mapsto \det d\tilde F (y,\tau)$ is affine-linear.

	\medskip
	Since $\tilde J = |\det d\tilde F|$, and in view of formula \eqref{mugammatau}, in order to finish the proof it remains to prove that for almost every $y\in Y$,
the expression $\det d\tilde F(y,\tau)$ does not change sign. Equivalently, we must show that $\det dF(y,t)$ does not change at any $t \in (a_y,b_y)$, which is precisely the content of Lemma \ref{lem_sign} below.
\end{proof}

\begin{lemma} Let $F_k : B_k \to A_k$ be one of the maps from Corollary \ref{straightcor}. Assume that $M$ is orientable. Then for almost any $y_0 \in Y_k$, the
function $t \mapsto \det d F_k(y_0,t)$ does not change sign in the interval $(a_{y_0}, b_{y_0})$.
\label{lem_sign}
\end{lemma}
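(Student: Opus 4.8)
The plan is to reduce to the reparametrised picture from the proof of Theorem~\ref{horodecomp}, and then to show that a sign change of the needle Jacobian along a transport ray would force two \emph{distinct} transport rays to meet, contradicting the fact that $Strain[u]$ is the disjoint union of its transport rays.

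First I would reduce to $\tilde F$. Up to an orientation-preserving (hence strictly increasing) reparametrisation, $F_k(y_0,\cdot)$ is the unit-speed oriented horocycle $\tau\mapsto\tilde F(y_0,\tau)=\alpha_{\lambda(y_0),t_0(y_0),\omega(y_0)}(\tau)$ with $y\mapsto(\lambda(y),t_0(y),\omega(y))$ locally Lipschitz on $Y_k$, exactly as in that proof; so $t\mapsto\det dF_k(y_0,t)$ changes sign on $(a_{y_0},b_{y_0})$ iff $g_{y_0}(\tau):=\det d\tilde F(y_0,\tau)$ changes sign on $(\tilde a_{y_0},\tilde b_{y_0})$. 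For a.e.\ $y_0\in Y_k$ the maps $\lambda,t_0,\omega$ are differentiable at $y_0$ (Rademacher), so $\tilde F(\cdot,\tau)$ is differentiable at $y_0$ for every $\tau$, with derivative smooth in $\tau$ and with first-order Taylor remainder in $y$ locally uniform in $\tau$ (since $\alpha$ is smooth in its parameters). By Lemma~\ref{detdFaffine}, $g_{y_0}$ is then affine-linear; and for a.e.\ $y_0$ it is not identically zero, since at the parameter corresponding to $t=0$ the pair $\big(\partial_y\tilde F(y_0,\cdot),\partial_\tau\tilde F(y_0,\cdot)\big)$ spans $T_{\varphi_k(y_0)}M$ — the first vector being, up to a multiple of the second, the velocity of the transverse geodesic $\varphi_k$. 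It therefore suffices to prove that, for a.e.\ $y_0\in Y_k$, the nontrivial affine function $g_{y_0}$ has no zero inside $(\tilde a_{y_0},\tilde b_{y_0})$.

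Suppose to the contrary $g_{y_0}(\tau^\ast)=0$ for some interior $\tau^\ast$; being nontrivial and affine, $g_{y_0}$ has strictly opposite signs on the two sides of $\tau^\ast$. Choose $\tau_0<\tau^\ast<\tau_1$ with $[\tau_0,\tau_1]\subseteq(\tilde a_{y_0},\tilde b_{y_0})$. Since $q:=\tilde F(y_0,\tau^\ast)$ lies in the relative interior of the transport ray $\gamma_{y_0}$, I would work in a positively oriented chart $U\cong J\times(-\delta,\delta)\subseteq\RR^2$ around $q$ in which $\gamma_{y_0}\cap U$ is exactly the segment $J\times\{0\}$, with $\tilde F(y_0,\tau)=(\psi(\tau),0)$ and $\psi'>0$ on $[\tau_0,\tau_1]$ (possible, as horocycle arcs are embedded). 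Because $M$ is oriented, the Euclidean determinant of $\big(\partial_y\tilde F,\partial_\tau\tilde F\big)$ in this chart has the same sign as $g_{y_0}$; hence the $x_2$-component $Q(\tau)$ of $\partial_y\tilde F(y_0,\tau)$ equals $g_{y_0}(\tau)$ times a continuous non-vanishing function on $[\tau_0,\tau_1]$, so $Q(\tau_0)Q(\tau_1)<0$ and $Q$ vanishes on $[\tau_0,\tau_1]$ only at $\tau^\ast$.

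The crucial step — and the one I expect to be the main obstacle — is to produce a sequence $y_n\in Y_k$, $y_n\to y_0$, $y_n\ne y_0$, with $\tilde a_{y_n}\to\tilde a_{y_0}$ and $\tilde b_{y_n}\to\tilde b_{y_0}$. Mere density of $y_0$ in $Y_k$ is not enough, because the ray endpoints need not depend semicontinuously on the base point; instead I would use that, for a.e.\ $y_0\in Y_k$, the point $y_0$ is a Lebesgue point, with respect to the Radon measure $\chi_{Y_k}\,dy$, of the bounded Borel functions $y\mapsto\tilde a_y$ and $y\mapsto\tilde b_y$ (these are bounded because $d_\Phi$ is bounded, so transport rays have bounded length). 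For such $y_0$ one gets $y_n$ as above, and then every compact subinterval of $(\tilde a_{y_0},\tilde b_{y_0})$ lies in $(\tilde a_{y_n},\tilde b_{y_n})$ for $n$ large; hence $\tilde F(y_n,\tau)=\alpha_{\lambda(y_n),t_0(y_n),\omega(y_n)}(\tau)$ is defined on $[\tau_0,\tau_1]$ and, by continuity of $(\lambda,t_0,\omega)$ on $Y_k$, converges there uniformly to $\tilde F(y_0,\cdot)$. Writing $\tilde F(y_n,\tau)=(\psi_n(\tau),H_n(\tau))$ in the chart, the uniform-remainder differentiability gives $H_n(\tau)=(y_n-y_0)Q(\tau)+o(y_n-y_0)$ uniformly on $[\tau_0,\tau_1]$; after passing to a subsequence along which $y_n-y_0$ has constant sign, $H_n(\tau_0)$ and $H_n(\tau_1)$ have opposite signs for $n$ large, so by the intermediate value theorem $H_n$ vanishes at some $\tau_n\in(\tau_0,\tau_1)$, and since $Q$ vanishes only at $\tau^\ast$ we must have $\tau_n\to\tau^\ast$, hence $\psi_n(\tau_n)\to\psi(\tau^\ast)\in(\psi(\tau_0),\psi(\tau_1))$. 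Thus for $n$ large the point $\tilde F(y_n,\tau_n)$ lies on $J\times\{0\}=\gamma_{y_0}\cap U$, i.e.\ in the relative interior of $\gamma_{y_0}$, so it is a point of $Strain[u]$ lying on both transport rays $\gamma_{y_0}$ and $\gamma_{y_n}$. Since $Strain[u]$ is the disjoint union of the transport rays (Proposition~\ref{uprops}(ii)), $\gamma_{y_0}=\gamma_{y_n}$, whence $y_0=y_n$ by injectivity of $F_k$ — a contradiction. Everything apart from the production of suitably long nearby rays is a continuity/IVT argument together with disjointness of transport rays.
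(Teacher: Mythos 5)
Your argument is in substance the paper's own proof, written out in more detail: take $y_0$ to be a density point of $Y_k$ at which nearby $y\in Y_k$ have rays defined on a common interval around the two parameters where the Jacobian has opposite signs; a sign change means the transversal derivative $\partial_y F_k$ points to opposite sides of the curve $F_k(y_0,\cdot)$ at those two parameters, so a first-order expansion plus the intermediate value theorem forces the nearby curve $F_k(y_n,\cdot)$ to cross $F_k(y_0,\cdot)$, contradicting disjointness of transport rays and injectivity of $F_k$. Two caveats. First, the detour through the unit-speed horocycle parametrization and Lemma \ref{detdFaffine} is unnecessary and narrows the scope: Lemma \ref{lem_sign} is stated for an arbitrary $F_k$ from Corollary \ref{straightcor} on an orientable Finsler surface, where no affine-linearity is available; your argument never really needs it, since ``changes sign'' already provides $t_1<t_2$ with $\det dF_k(y_0,t_1)\cdot\det dF_k(y_0,t_2)<0$, and the chart/IVT/crossing argument then runs verbatim at the level of $F_k$ (which is how the paper phrases it); the affine-linearity is only used later, to identify the density of $\mu_\gamma$. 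Second, your justification that $\tilde a_y,\tilde b_y$ are bounded ``because $d_\Phi$ is bounded'' is incorrect: a bound on the $\Phi$-length of a transport ray does not bound its hyperbolic arclength (horocycle arcs of $\Phi$-length close to $2\pi$ have arbitrarily large hyperbolic length), so these functions need not be bounded, nor locally integrable. The step you single out as crucial is still fine, but should be justified differently, e.g.\ by taking $y_0$ to be a Lebesgue density point of every set $\{y\in Y_k \, ; \, \tilde a_y<q<r<\tilde b_y\}$ (with $q<r$ rational) that contains it, or by applying Lebesgue differentiation to truncations of $\tilde a,\tilde b$; either variant produces the required sequence $y_n\to y_0$ in $Y_k$ with $[\tau_0,\tau_1]\subset(\tilde a_{y_n},\tilde b_{y_n})$, after which your crossing argument is complete.
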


\begin{proof} We may assume that $y_0$ is a Lebesgue density point of $Y := Y_k$
and is a Lebesgue point of the functions $a_y$ and $b_y$. Thus
$ \liminf_{Y \ni y \rightarrow y_0} a_y \leq a_{y_0}$ while $ \limsup_{Y \ni y \rightarrow y_0} b_y \geq b_{y_0}$.
Assume by contradiction that there exist $t_1 < t_2$ in the interval $(a_{y_0}, b_{y_0})$ such that
\begin{equation}\label{detdFsigns} \det dF(y_0,t_1) \cdot \det dF(y_0,t_2) < 0,  \end{equation}
i.e., the linear basis
$$ \frac{\partial F}{\partial t}(y_0, t), \frac{\partial F}{\partial y}(y_0, t) \in T_{F(y_0, t)} M $$
has a different orientation for $t = t_1$ and for $t = t_2$. The curve $t \mapsto F(y_0, t)$ is a simple regular curve
defined for $t \in (a_{y_0}, b_{y_0})$, and the vector $\frac{\partial F}{\partial t}(y_0, t)$ is a smooth, non-zero tangent to this curve.
Hence, by \eqref{detdFsigns}, the two vectors $(\partial F/\partial y)(y_0,t_1)$ and $(\partial F/\partial y)(y_0,t_2)$ point at different sides of the curve $\gamma$. This implies that the curves $F(y,\cdot)$ and $F(y_0,\cdot)$ must cross each other for some $y$ sufficiently close to $y_0$, but $F$ is a bijection by Corollary \ref{straightcor}, a contradiction.
\end{proof}

\section{Appendix: proof of Lemma \ref{tanglemma}}

Let $(M,\Phi)$ be a two-dimensional Finsler manifold. Let $\pi : TM \to M$ denote the tangent bundle map of $M$, and let $SM\subseteq TM$ denote the unit tangent bundle. For each $v \in SM$, let $\gamma_v$ denote the unit-speed geodesic satisfying $\dot\gamma_v(0) = v$, defined on its maximal domain of definition.
\medskip As in \cite[Section 2.1]{BCS} at each $p \in M$ we consider the {\it Hessian metric} induced by $\Phi^2/2$ on the fiber $T_p M$.
That is, we consider the strongly convex function $\Phi^2/2$ in the linear
space $T_p M$, and for $u,v,w \in T_p M$ with $u \neq 0$ we set
$$g_u(v,w) =  \frac{1}{2} \partial_v \partial_w  \Phi^2(u), $$
where $\partial_v$ is directional derivative in the direction of the vector $v$ in the linear space $T_p M$.
Since $\Phi$ is $1$-homogeneous in $T_p M$, from the Euler identity we know that
$$ g_{u}(u,v) = \partial_v (\Phi^2/2). $$
 If $U\subseteq M$ is an orientable domain, then there exists a smooth map $J: SU \to SU$ (here $SU$ is the unit tangent bundle of $(U,\Phi)$) satisfying
\begin{equation}\label{Jortho} g_{J(v)}(J(v),v) = 0, \qquad v \in SU.\end{equation}
Indeed, by orientability we can choose smoothly for each $v \in SU$ a unit covector $v_\perp \in S^* U$ in the annihilator of $v$.
Then we take $J(v) = \cL(v_\perp)$, where $\cL$ is the Legendre transform (see Section \ref{backgroundsec}). Now (\ref{Jortho}) follows by a standard argument in convex analysis.

\medskip
The following lemma asserts the existence of ``parallel geodesic coordinates" in any direction in $SM$. Every time we work in local coordinates, we will, as we may, assume that for every $p,q$ in the coordinate neighbourhood, the quantities $d(p,q), d(q,p)$ and $|p-q|$ (where the latter stands for Euclidean distance in coordinates), are all comparable up to some multiplicative factor depending only on the neighbourhood. For a subset $A \subseteq M$ and a point $x \in M$ we write
$d(x,A) = \inf_{y \in A} d(x,y)$ and
$d(A,x) = \inf_{y \in A} d(y,x)$.

\begin{lemma}(``Existence of geodesic parallel coordinates'')\label{parcor}
	Let $(M,\Phi)$ be a Finsler surface. Then for every $p \in M$ there exist $s_0,t_0 > 0$ and a neighbourhood $U \ni p $ with the following property: for every $v \in SU$ there exists a coordinate chart $\psi_v: (-s_0,s_0) \times (-t_0, t_0) \to U_v\subseteq M$, whose image $U_v$ contains $U$ and such that
	$$\psi_v(s,0) = \gamma_v(s), \qquad s \in (-s_0,s_0),$$
	and
		\begin{align}
			d_M(\psi_v(s,t), \gamma_v) &= d_M(\psi_v(s,t), \gamma_v(s)) = -t, & s \in (-s_0, s_0), & t \in (-t_0,0), \label{psidist1} \\
			d_M(\gamma_v,\psi_v(s,t)) &= d_M(\gamma_v(s),\psi_v(s,t)) = t, & s \in (-s_0, s_0), & t \in (0,t_0). \label{psidist2}
		\end{align}
	In particular, for all $s \in (-s_0,s_0)$, the curve $\psi_v(s,\cdot)$ is a forward-minimizing geodesic. Moreover, the transition maps $\{\psi_u^{-1}\circ\psi_v\}_{u,v \in SU}$ all have $C^3$ norms bounded by a constant indepenent of $u,v$.
\end{lemma}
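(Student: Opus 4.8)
The plan is to build $\psi_v$ as the ``Fermi chart'' of the geodesic $\gamma_v$: at each point $\gamma_v(s)$ emit the unit-speed geodesic leaving $\gamma_v$ $g$-orthogonally, and use its time parameter as the second coordinate. Concretely I would set
\[ \psi_v(s,t) := \gamma_{J(\dot\gamma_v(s))}(t), \]
where $J$ is the map from \eqref{Jortho}. First I would pass to a neighbourhood $W\ni p$ that is strongly convex, and (shrinking $W$) inside which any two points are joined by a unique minimizing geodesic depending smoothly on its endpoints; every point has such a neighbourhood. Since geodesics depend smoothly on their initial conditions and $\overline{SU}$ is compact for a small precompact $U\ni p$, there are $s_0,t_0>0$ — uniform over $v\in SU$ — for which $\psi_v$ is defined and smooth on $(-3s_0,3s_0)\times(-t_0,t_0)$ with image in $W$ and $\gamma_v$ extending slightly beyond; we then take the chart in the lemma to have domain $(-s_0,s_0)\times(-t_0,t_0)$. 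Clearly $\psi_v(s,0)=\gamma_v(s)$.

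Next I would verify that $\psi_v$ is a diffeomorphism onto an open set $U_v$. At $t=0$ one has $\partial_s\psi_v(s,0)=\dot\gamma_v(s)$ and $\partial_t\psi_v(s,0)=J(\dot\gamma_v(s))$, and these are linearly independent: if $J(w)=\pm w$ then $g_{J(w)}(J(w),w)=\pm\Phi(\pm w)^2\neq 0$, contradicting \eqref{Jortho}. Hence $d\psi_v$ is invertible on $\{t=0\}$, and by continuity and compactness it stays invertible on the whole box after shrinking $t_0$, with $\|d\psi_v\|$ and $\|(d\psi_v)^{-1}\|$ bounded uniformly in $v$. Injectivity follows from the usual ``straightening'' argument: $\psi_v$ is injective on $\{t=0\}$ since a minimizing geodesic is simple, it is a local diffeomorphism, and after a further shrinking of $t_0$ the transverse curves $s\mapsto\psi_v(s,t)$ remain embedded and disjoint. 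Finally one shrinks $U$ so that $U\subseteq U_v$ for every $v\in SU$, which is again continuity plus compactness since $\psi_v(0,0)=\pi(v)\in U$ and $U_v$ is a uniform tube around $\gamma_v$.

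The heart of the matter is the distance identity \eqref{psidist2} (and \eqref{psidist1}, which is symmetric). Fix $s$ with $|s|<s_0$, fix $t\in(0,t_0)$, and put $x=\psi_v(s,t)$. Since $\psi_v(s,\cdot)$ is a unit-speed geodesic inside the strongly convex set $W$, it is the minimizing geodesic from $\gamma_v(s)$ to $x$, so $d(\gamma_v(s),x)=t$ and therefore $d(\gamma_v,x)\le t$. For the reverse inequality, consider $h(s'):=d(\gamma_v(s'),x)$ on $[-2s_0,2s_0]$: its boundary values exceed $c\,s_0-t$, which is $>t$ once $t_0$ is small (uniformly in $v$), so the minimum is attained at an interior point $s^*$. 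There the minimizing geodesic $\sigma$ from $\gamma_v(s^*)$ to $x$ is unique, $h$ is differentiable, and the first variation of arclength (equivalently, the differential of $q\mapsto d(q,x)$) gives $g_{\dot\sigma(0)}(\dot\sigma(0),\dot\gamma_v(s^*))=0$. Strict convexity of the indicatrix leaves exactly two unit vectors satisfying this, and for $t$ small the one pointing away from $x$ is ruled out by minimality, so $\dot\sigma(0)=J(\dot\gamma_v(s^*))$; hence $\sigma$ coincides with $\psi_v(s^*,\cdot)$ and $x=\psi_v(s^*,h(s^*))$. Injectivity of $\psi_v$ forces $s^*=s$, so $d(\gamma_v,x)=h(s)=t$, attained only at $\gamma_v(s)$; in particular $\psi_v(s,\cdot)$ is forward-minimizing. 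The case $t<0$ is identical after replacing $d(\cdot,\cdot)$ by the reverse distance and $J$ by its other branch, which is what $t\mapsto-t$ corresponds to. For the uniform $C^3$ bound on the transition maps, I would observe that $(s,t,v)\mapsto\psi_v(s,t)$ is a composition of smooth maps (the geodesic flow, $w\mapsto J(w)$, and $(v,s)\mapsto\dot\gamma_v(s)$), so over the compact set $\overline{SU}$ all its derivatives are bounded; together with the uniform lower bound on $\|d\psi_v\|$ this yields, via a quantitative inverse function theorem, uniform $C^3$ bounds for $\psi_v^{-1}$ and hence for $\psi_u^{-1}\circ\psi_v$. I expect the only genuinely delicate point to be the uniqueness of the nearest foot point in the third step — selecting the correct branch of $J$ from the transversality condition and excluding far-away feet — together with the bookkeeping that keeps every estimate uniform in $v\in SU$, which the reductions to a strongly convex $W$ and a small precompact $U$ are designed to absorb.
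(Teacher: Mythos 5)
Your proposal is correct and follows essentially the same route as the paper: you define $\psi_v(s,t)=\gamma_{J(\dot\gamma_v(s))}(t)$ (the paper writes this as $\pi\circ\phi_t\circ J\circ\phi_s(v)$), get uniformity in $v$ from smooth dependence, compactness and the inverse function theorem, and prove the distance identities by locating the foot point, applying the first variation formula together with \eqref{Jortho} to identify the minimizing geodesic's initial velocity with the correct branch of $J$, and invoking injectivity of $\psi_v$ to conclude $s^*=s$. The only differences (your boundary-value bound on $h$ versus the paper's backward-ball containment to force an interior foot point, and your explicit quantitative inverse-function-theorem remark for the uniform $C^3$ bounds) are cosmetic.
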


\begin{proof}
	Let $U$ be a strongly convex neighbourhood of $p$, and let $J$ be the map discussed above. Possibly after shrinking  $U$, there exists a rectangle $Q = (-s_0,s_0) \times (-t_0, t_0) \subseteq \RR^2$ such that for every $v \in SU$, the map
	$$\psi_v : Q \to M, \qquad \psi_v(s,t) = \pi\circ\phi_t\circ J\circ\phi_s(v),$$
	where $\phi$ denotes the geodesic flow on $TM$, is defined and smooth on all of $Q$. Note that
	for all $s \in (-s_0, s_0)$,
	 $$ \psi_v(s,0) = \gamma_v(s). $$
	The differential of $\psi_v$ is nonsingular at $(0,0)$. In fact,
	$$d\psi_v(0,0) = v \otimes ds + Jv \otimes dt.$$
	By the inverse function theorem, possibly after shrinking
	$U$ and decreasing $s_0$ and $t_0$, 	
	we may assume that the map $\psi_v$ is bijective from $Q$ to $U_v : = \varphi_v(Q)$, for every $v \in TU$.
	Moreover, we may assume that  both $\psi_v$ and $\psi_v^{-1}$ have $C^3$ norm bounded uniformly over all $v \in TU$, where the $C^3$ norm is taken with respect to some fixed coordinate chart on $U$. This also implies that the image $U_v$ contains a ball of radius $r_0 = r_0(p, t_0, s_0)$ in this coordinate chart; thus, by taking $U$ smaller, we may assume that $U\subseteq U_v$ for every $v \in TU$.	
	
	\medskip We prove the second equality \eqref{psidist2}. First, by the triangle inequality, if we take $t_0$ small enough, then for any $s \in (-s_0/2,s_0/2)$ and $t\in (0,t_0)$, the intersection of $\gamma_v$ with any backward ball of radius $t_0$ centered at $\psi_v(s,t)$ is contained in $\gamma_v\vert_{(-s_0,s_0)}$. Therefore, as the distance from $\gamma_v$ to $\psi_v(s,t)$ is at most $t_0$, it must be attained at a point $\gamma_v(\hat s)$, for some $\hat s \in (-s_0,s_0)$.
	
	\medskip By \eqref{Jortho} and the first variation formula (see \cite[Section 5.1]{BCS}), the geodesic from $\gamma_v(\hat s)$ to $\psi_v(s,t)$ has initial velocity $J(\dot\gamma_v(\hat s))$ (here we use the fact that $t_0 > 0$, because then $J(\dot\gamma_v(\hat s))$ points at the side of $\gamma_v$ where $\psi_v(s,t)$ lies).
	
	\medskip By the definition of $\psi_v$ we have $\psi_v(\hat s,\hat t) = \psi_v(s,t)$ for some $\hat t \in (0,t_0)$, but since $\psi_v$ is bijective we conclude that $\hat s = s$ and $\hat t = t$, i.e., the closest point to $\psi_v(s,t)$ is $\gamma_v(s)$ and the distance is $t$. This proves the second equation, after replacing $s_0$ by $s_0/2$. The proof of \eqref{psidist1} is similar.
\end{proof}

Let us recall the formulation of Lemma \ref{tanglemma}:

\begin{lemma}[``Disjoint geodesics that are close to each other at one point have similar tangents'']\label{tanglemma_} Let $p \in M$. Then there exist $C,c > 0, \sigma_0 \in (0,1)$ and a coordinate chart $\psi: V \to U$, where $V$ is an open subset of $\RR^2$ and $U$ is a neighbourhood of $p$, such that the following holds.
	
	\medskip  Suppose that
	$0 < \sigma \leq \sigma_0$ and $\gamma_1, \gamma_2: (-\sigma, \sigma) \rightarrow U$ are two disjoint, unit-speed, forward geodesics. Set
	\begin{equation}\label{etadef}\eta_i = \psi^{-1} \circ \gamma_i,\end{equation}
and assume that
	$|\dot{\eta}_1(0) - \dot{\eta}_2(0)| < c$. Then,
	\begin{equation}\label{etadotdist} |\dot{\eta}_1(0) - \dot{\eta}_2(0)| \leq \frac{C}{\sigma} \cdot |\eta_1(0) - \eta_2(0)|. \end{equation}
\end{lemma}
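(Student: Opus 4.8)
The plan is to pass to the ``geodesic parallel coordinates'' of Lemma~\ref{parcor} adapted to the vector $\dot\gamma_1(0)$, in which $\gamma_1$ becomes a straight piece of an axis and the disjointness of $\gamma_1$ and $\gamma_2$ becomes the statement that the transverse coordinate of $\gamma_2$ cannot return to zero before time $\sigma$; a differential inequality for that transverse coordinate then converts this into the linear tangent estimate. Concretely: fix $p$ and apply Lemma~\ref{parcor} to get $U\ni p$, numbers $s_0,t_0>0$, and for every $v\in SU$ a chart $\psi_v:(-s_0,s_0)\times(-t_0,t_0)\to U_v\supseteq U$, with the crucial feature that the transition maps between any two of these charts, and between them and one fixed chart $\psi$ on $U$, have $C^3$ norms bounded by a constant depending only on $p$. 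Shrinking $U,s_0,t_0$, we may assume that on $U$ the Finsler distance, the distance in $\psi$, and the distance in every $\psi_v$ are mutually comparable with constants depending only on $p$; this is the chart $\psi$ of the statement. Given $\gamma_1,\gamma_2$ as in the lemma, set $v=\dot\gamma_1(0)\in SU$ and work in $\psi_v$. Uniqueness of geodesics with prescribed initial velocity shows (once $\sigma_0\le s_0$) that $\gamma_1$ is the arc $r\mapsto(r,0)$ of the $s$-axis, parametrised by arclength; write $\gamma_2$ as $r\mapsto(s(r),t(r))$ on $(-\sigma,\sigma)$, with $(s(0),t(0))$ near the origin and $(s'(0),t'(0))$ correspondingly close to $(1,0)$. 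Since $\psi^{-1}\circ\psi_v$ is uniformly bi-Lipschitz in $C^1$ and $|\eta_1(0)-\eta_2(0)|\approx |s(0)|+|t(0)|$, it suffices to prove $|s'(0)-1|+|t'(0)|\le (C/\sigma)\bigl(|s(0)|+|t(0)|\bigr)$; and unit speed of $\gamma_2$ pins $s'(0)$ down as a smooth function of $(t(0),t'(0))$ near the base case (where $\Phi((s,0),(s',0))=s'$ by homogeneity, so $s'=1$), whence $|s'(0)-1|\le C(|t(0)|+|t'(0)|)$. Thus everything reduces to
\[ |t'(0)|\ \le\ \frac{C}{\sigma}\bigl(|s(0)|+|t(0)|\bigr). \]

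\emph{The differential inequality.} Write the Finsler geodesic equation in $\psi_v$-coordinates as $\ddot x^k+2G^k(x,\dot x)=0$, with $G^k$ smooth off the zero section and $2$-homogeneous in $\dot x$ (the geodesic spray coefficients, see \cite{BCS}). Because $r\mapsto(r,0)$ is a unit-speed geodesic, $G^2((s,0),(1,0))=0$ for all relevant $s$; combining this with smoothness of $G^2$ and with the bound $|s'(r)-1|\le C(|t(r)|+|t'(r)|)$ (valid along $\gamma_2$ by the unit-speed argument, after a short bootstrap) we get, along $\gamma_2$,
\[ |t''(r)|\ \le\ C_2\bigl(|t(r)|+|t'(r)|\bigr). \]
A Gr\"onwall estimate then keeps $|t|+|t'|$ within a factor $2$ of its value at $0$ throughout $(-\sigma_0,\sigma_0)$, and a one-line Taylor argument gives, provided $\sigma_0$ is small enough (depending only on $p$): if $t$ does not vanish on $[0,\rho)$ with $\rho\le\sigma_0$ then $t'(0)\ge -C_1|t(0)|/\rho$, and symmetrically $t'(0)\le C_1|t(0)|/\rho$ if $t$ does not vanish on $(-\rho,0]$; the point is that $t$ stays essentially linear at rate $t'(0)$, so it would reach $0$ before time $\rho$ if $|t'(0)|\rho$ were large compared with $|t(0)|$. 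Hence, writing $(r_{**},r_*)\ni 0$ for the maximal subinterval of $(-\sigma,\sigma)$ on which $t$ has constant sign (with $r_*=\sigma$, resp. $r_{**}=-\sigma$, when $t$ has no zero in $[0,\sigma)$, resp. $(-\sigma,0]$), we obtain $|t'(0)|\le C_1|t(0)|/\min(r_*,|r_{**}|)$ whenever $t(0)\ne 0$.

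\emph{Using disjointness.} If $t(0)=0$ then $\gamma_2(0)=(s(0),0)$ lies on the geodesic extending $\gamma_1$, and disjointness from $\gamma_1=\{(r,0):|r|<\sigma\}$ forces $|s(0)|\ge\sigma$; then $|s(0)|+|t(0)|\ge\sigma$ and the desired bound follows from the hypothesis $|\dot\eta_1(0)-\dot\eta_2(0)|<c$ once $C$ is large enough relative to $c$. If $t(0)\ne 0$, then $r_*,|r_{**}|>0$, and whenever $t$ vanishes at some $r_*\in(0,\sigma)$ the point $\gamma_2(r_*)$ again lies on the $s$-axis outside the arc of $\gamma_1$, so its $s$-coordinate has absolute value $\ge\sigma$, while $\gamma_2(0)$ is within Finsler distance $|r_*|$ of $\gamma_2(r_*)$ (with a little care about the non-reversibility of $\Phi$ when $r_*<0$, where one reverses the arc and pays a bounded factor), giving $|s(0)|\ge\sigma-C_0|r_*|$; the same holds at $r_{**}$. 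Thus: if $\min(r_*,|r_{**}|)\le \sigma/(2C_0)$ then $|s(0)|\ge\sigma/2$ and we conclude as in the case $t(0)=0$; otherwise $\min(r_*,|r_{**}|)>\sigma/(2C_0)$ and the previous step gives $|t'(0)|\le 2C_0C_1|t(0)|/\sigma\le (C/\sigma)(|s(0)|+|t(0)|)$. In all cases the reduced inequality holds, proving the lemma.

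The main obstacle is the differential-inequality step, carried out with constants uniform over the whole family $\{\psi_v\}$: one must check that in geodesic parallel coordinates the transverse coordinate of \emph{any} nearby geodesic has second derivative controlled by a constant times (its value $+$ its derivative) --- equivalently, that nearby geodesics bend away from the reference geodesic no faster than their current displacement and tilt permit --- and that the constant is independent of $v$. Granting this, the remainder is the elementary observation that a function with $|t''|\lesssim|t|+|t'|$ staying nonzero on $[0,\rho)$ has $|t'(0)|\lesssim|t(0)|/\rho$, together with the geometric fact that a geodesic disjoint from $\gamma_1$ cannot meet the axis inside the $\sigma$-arc. The remaining points --- comparing the fixed chart to the $\psi_v$, bounding $|s'(0)-1|$, and handling the asymmetry of $\Phi$ --- are routine bookkeeping.
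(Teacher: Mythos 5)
Your proposal follows essentially the same route as the paper's proof: pass to the geodesic parallel coordinates of Lemma \ref{parcor} adapted to $\dot\gamma_1(0)$, derive the differential inequality $|y''|\lesssim |y|+|y'|$ for the transverse coordinate of $\gamma_2$ from the geodesic equation combined with the unit-speed estimate $|(x',y')-(1,0)|\lesssim |y|+|y'|$, use disjointness to rule out a zero of the transverse coordinate too close to $0$, and conclude with the elementary ODE estimate $|y'(0)|\lesssim |y(0)|/\sigma$ (the paper's Lemma \ref{lem_1006}, which your Gr\"onwall--Taylor argument replaces equivalently). The remaining differences --- spray coefficients instead of Christoffel symbols, and bookkeeping via first zeros of $t$ with the bound $|s(0)|\ge\sigma-C_0|r_*|$ instead of the paper's sign-constancy of $y$ on $(-\sigma/2,\sigma/2)$ --- are cosmetic, and the argument is correct.
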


For the proof of Lemma \ref{tanglemma_} we require the following little lemma:

\begin{lemma}
	Let $0 < \delta \le 1, A > 0$. Let $y: (-\delta,\delta) \to \RR$ be a positive smooth function satisfying $|y''| \le A (|y| + |y'|)$. Then $|y'(0)| \le B y(0) / \delta$, where $B$ is a constant depending on $A$ alone. \label{lem_1006}
\end{lemma}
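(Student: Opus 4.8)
The plan is to turn the pointwise differential inequality into a uniform a priori bound on $y''$ over the whole interval, and then to exploit the positivity of $y$ through a second-order Taylor expansion at the origin: a steep slope at $0$ would, over a time of order $\min\{\delta,1/A\}$, force $y$ to go negative before reaching the endpoint. Write $y_0 = y(0)$ and $m = y'(0)$. I would first reduce to the case $m>0$, since replacing $y(t)$ by $y(-t)$ preserves all the hypotheses and flips the sign of $y'(0)$, while the case $m=0$ is trivial.

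\emph{Step 1: an a priori bound on $y''$.} Consider $\phi(t) := y(t) + |y'(t)|$, which is positive (as $y>0$) and locally Lipschitz. Wherever $y'\neq 0$ we have $\phi' = y' + \operatorname{sgn}(y')\,y''$, hence $|\phi'| \le |y'|+|y''| \le (1+A)\bigl(|y|+|y'|\bigr) = (1+A)\phi$ almost everywhere by the hypothesis on $y''$. By the usual Gronwall/monotonicity argument (comparing $\phi$ with $\phi(0)e^{\pm(1+A)t}$ on each side of $0$) and since $\delta \le 1$, this yields $\phi(t) \le K\phi(0)$ with $K := e^{1+A}$, for all $t\in(-\delta,\delta)$; therefore $|y''(t)| \le A\phi(t) \le AK(y_0+m)$ throughout $(-\delta,\delta)$. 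One could equally use the smooth surrogate $\sqrt{y^2+(y')^2}$ to avoid the absolute value.

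\emph{Step 2: Taylor expansion and the choice of scale.} For $\tau\in(0,\delta)$, Taylor's theorem with Lagrange remainder gives $y(-\tau) = y_0 - m\tau + \tfrac12 y''(\xi)\tau^2$ for some $\xi\in(-\tau,0)$; combining $y(-\tau)>0$ with the bound from Step 1 gives $0 < y_0 - m\tau + \tfrac12 AK(y_0+m)\tau^2$, i.e.
$$ m\tau\bigl(1 - \tfrac12 AK\tau\bigr) < y_0\bigl(1 + \tfrac12 AK\tau^2\bigr). $$
Now take $\tau := \min\{\delta/2,\ 1/(AK)\}\in(0,\delta)$; then $1-\tfrac12 AK\tau\ge\tfrac12$ and, using $\tau\le\delta/2\le\tfrac12$, also $1+\tfrac12 AK\tau^2\le\tfrac54$, so that $m < 5y_0/(2\tau)$. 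If $\tau=\delta/2$ this reads $m<5y_0/\delta$; if $\tau=1/(AK)$ it reads $m<\tfrac52 AK\,y_0\le\tfrac52 AK\,y_0/\delta$ since $\delta\le1$. In both cases $|y'(0)| \le B\,y(0)/\delta$ with $B := \tfrac52\max\{2,\,Ae^{1+A}\}$, which depends only on $A$.

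The only delicate point is Step 1: obtaining a bound on $|y''|$ valid across all of $(-\delta,\delta)$ with a constant that does not blow up as $\delta\to0$, which is exactly where the hypothesis $\delta\le1$ enters. Once $y''$ is controlled uniformly, the conclusion is an elementary one-variable estimate resting on the positivity of $y$.
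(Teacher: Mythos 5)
Your proof is correct, and it takes a genuinely different (though equally elementary) route from the paper's. The paper normalizes $y(0)=1$, $y'(0)\le 0$, and works only on the ``steep stretch'': it takes the first point $x_0\in(0,\delta/2)$ where $y'\ge -2/\delta$ (such a point exists, else positivity at $\delta/2$ fails), notes that on $[0,x_0]$ one has $0<y\le 1\le |y'|$ so the hypothesis collapses to the one-sided linear inequality $y''\le -2Ay'$, and concludes that $e^{2Ax}y'(x)$ is decreasing, whence $y'(0)\ge e^{2Ax_0}y'(x_0)\ge -2e^{2A}/\delta$. You instead first run Gronwall on the Lipschitz quantity $y+|y'|$ (or the smooth surrogate $\sqrt{y^2+(y')^2}$) to get a uniform bound $|y''|\le Ae^{1+A}\bigl(y(0)+|y'(0)|\bigr)$ on all of $(-\delta,\delta)$, and then finish by second-order Taylor expansion plus positivity at the well-chosen scale $\tau=\min\{\delta/2,\,1/(AK)\}$; the self-referential appearance of $m=|y'(0)|$ in the curvature bound is harmless because it enters with a factor $\tfrac12 AK\tau^2\le\tfrac12\tau$ and gets absorbed. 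The paper's argument is shorter and needs no global estimate, since on the steep interval the slope dominates the value and the inequality integrates directly as an ODE for $y'$; your argument is more modular (uniform curvature bound, then a one-line convexity-style conclusion) and makes the length scale at which positivity bites explicit, at the cost of the extra Gronwall step on a merely Lipschitz function (a.e.\ differentiability, which your surrogate remark handles) and a slightly larger constant $B=\tfrac52\max\{2,Ae^{1+A}\}$ versus the paper's $2e^{2A}$. Both proofs are complete and yield a constant depending on $A$ alone, as required.
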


\begin{proof}[Proof of Lemma \ref{tanglemma_}]
	There is no loss of generality in assuming that $y(0) = 1$ and $y'(0) \le 0$. Since $y(\delta/2)$ is positive, there exists $x \in (0,\delta/2)$ where $y'(x) \ge - 2/\delta$. Let $x_0$ be the minimal such point. Then for all $x\in[0,x_0]$ we have $y'(x) \le -2/\delta$ and $0 < y(x) \le 1$. In particular, $|y(x)| \le |y'(x)|$, and by our assumption this implies that $y''(x) \le 2A |y'(x)| = - 2A y'(x)$ for $x \in [0,x_0]$. It follows  that $e^{2Ax}y'(x)$ is decreasing for $x \in [0,x_0]$. Thus
	\begin{equation*}
	0 \ge y'(0) \ge y'(x_0)e^{2Ax_0} \ge -2e^{2A}/\delta
	\end{equation*}
	as desired.
\end{proof}

\begin{proof}
	Let $p \in M$, and let $U$ be the neighbourhood of $p$ supplied by Lemma \ref{parcor}. We need to provide a  coordinate chart for which the conclusion of the lemma holds true, and we arbitrarily choose any of the $\psi_u$
	supplied by Lemma \ref{parcor}. Note that $U$ is contained in the image of $\psi_u$.  We use the notation $A \lesssim B$ to indicate $A \leq C B$ where $C > 0$ is a constant depending only on the manifold $M$ and the point $p$.
	We will set  $\sigma_0 = \min \{s_0, t_0, r_0, 1 \}$  for $s_0$ and $t_0$ from Lemma \ref{parcor}, and where $r_0 > 0$ will be described below, and will be independent of the choice of the geodesics $\gamma_1$ and $\gamma_2$.

		\medskip Let $\gamma_1, \gamma_2$ be two  disjoint geodesics contained in $U$, defined on some symmetric interval $(-\sigma, \sigma) \subseteq \RR$. We need to find $C, c > 0$, independent of $\gamma_1$ and $\gamma_2$, such that
	(\ref{etadotdist}) holds true assuming $|\dot{\eta}_1(0) - \dot{\eta}_2(0)| < c$ and assuming $\sigma \leq \sigma_0$, where $\eta_i$ is defined in (\ref{etadef}).

	\medskip Recall that the transition maps $\psi_u^{-1}\circ\psi_v$ from the conclusion of Lemma \ref{parcor} all have $C^3$ norms uniformly bounded by a constant. If we define $\eta_i$
	via (\ref{etadef}) with $\psi = \psi_u$
	or with $\psi = \psi_v$, we may obtain different curves in $\RR^2$.
	However, if we switch from $\psi_u$ to $\psi_v$
	then both expressions
	$$ |\eta_1(0) - \eta_2(0)| \qquad \text{and}
	\qquad |\dot{\eta}_1(0) - \dot{\eta}_2(0)| $$
	can change at most by a multiplicative constant independent of $u$ and $v$. We may therefore
	switch to {\it any} coordinate chart $\psi_v$ for any $v \in SU$, and find suitable constants $C, c$ with respect to the new chart. We may even choose a chart that depends on
	the geodesics $\gamma_1$ and $\gamma_2$.

	\medskip  We choose to work with the chart $\psi := \psi_{\dot\gamma_1(0)}$, and define $\eta_i$ as in \eqref{etadef}. Then
	$$\eta_1(t) = (t,0) \qquad t \in (-\sigma, \sigma). $$
	 Thus, if we write
	$$\eta_2(t) = (x(t),y(t)), \qquad t \in (-\sigma,\sigma),$$
	then the inequality \eqref{etadotdist} that we need to prove reads
	\begin{equation}\label{etadotdistcoor}| (1,0) - (x'(0),y'(0))| \leq C \cdot \sigma^{-1}|(x(0),y(0))|, \end{equation}
	assuming that $|(1,0) - (x'(0),y'(0))|$ is less than some positive constant and that $\sigma \leq \sigma_0$. To prove this, we first claim that if $|(1,0) - (x'(0),y'(0))|$ is less than some constant then \begin{equation}\label{doteta2minus10}|(1,0) - (x'(t),y'(t))| \lesssim |y(t)| + |y'(t)|, \qquad t \in (-\sigma,\sigma).\end{equation}
	Let us prove this claim. Fix $t \in (-\sigma,\sigma)$, and let $\Phi_0 = \Phi\vert_{(x(t),0)}$. That is, $\Phi_0$ is the Finsler norm $\Phi$, evaluated at $(x(t),0)$ and viewed as a norm on $\RR^2$ via the identification $TU \cong TV \cong V \times \RR^2$. Note that since $\eta_2$ is unit-speed, and since $\Phi$ is locally Lipschitz,
	\begin{equation}\label{phi0ineq1}|\Phi_0(x'(t),y'(t)) - 1| = \Big|\Phi\vert_{(x(t),0)}(x'(t),y'(t)) - \Phi\vert_{(x(t),y(t))}(x'(t),y'(t))\Big| \lesssim |y(t)|.\end{equation}
 On a certain neighbourhood of the point $(1,0)$ in $\RR^2$, the map $(v^1,v^2) \mapsto (\Phi_0(v^1,v^2),v^2)$ is bi-Lipschitz. If $|(x'(0),y'(0)) - (1,0)|$ is less than some constant, then the point $(x'(0),y'(0))$ lies in this neighbourhood, and if $\sigma \leq \sigma_0$ and $\sigma_0$ is smaller than some constant $r_0 > 0$, then $(x'(t),y'(t))$ lies in this neighbourhood for all $t \in (-\sigma,\sigma)$. Therefore
	\begin{equation}\label{phi0ineq2}|(x'(t),y'(t)) - (1 , 0)| \lesssim |\Phi_0(x'(t),y'(t)) - \Phi_0(1,0)| + |y'(t)| = |\Phi_0(x'(t),y'(t)) - 1| + |y'(t)|,\end{equation}
	and \eqref{doteta2minus10} follows from \eqref{phi0ineq1} and \eqref{phi0ineq2}. To prove \eqref{etadotdistcoor} and complete the proof of the lemma, all that remains is to show that
	\begin{equation}\label{ydotfinal}|y'(0)| \lesssim |y(0)|/\sigma.\end{equation}
	The curves $\eta_1 (t)= (t,0)$ and $\eta_2(t) = (x(t),y(t))$ are unit-speed geodesics, so by Lipschitz continuity of the Christoffel symbols in our coordinate chart,
	\begin{equation} |y''|\lesssim |y| + |(x',y') - (1,0)| \lesssim |y| + |y'|, \label{eq_1527}
	\end{equation}
	where the second inequality follows from \eqref{doteta2minus10}.
		Now, if $|\eta_1(0) - \eta_2(0)| \ge c'\sigma$, then we get \eqref{etadotdistcoor} immediately since the left hand side is bounded by a constant. Thus, we may assume that \begin{equation}\label{gammasdist} |\eta_1(0) - \eta_2(0)| \le c \sigma.\end{equation} If this $c$ is taken small enough, then  \eqref{gammasdist} implies that $y(t)$ does not change sign when $t \in (-\sigma/2, \sigma/2)$, because otherwise $\eta_2$ would intersect the $x$ axis within a distance of less than $\sigma$ from the origin, which would imply that $\eta_1$ and $\eta_2$ intersect. Assume without loss of generality  that $y(t)$ is positive for all $t \in (-\sigma/2, \sigma/2)$. Then \eqref{ydotfinal} follows from (\ref{eq_1527}) and Lemma \ref{lem_1006}.
\end{proof}

\bigskip

\noindent
Department of Mathematics, Weizmann Institute of Science, Rehovot 76100, Israel. \\
{\it e-mails:} \verb"{rotem.assouline,boaz.klartag}@weizmann.ac.il"

\end{document}